\newtheorem{thm}{Theorem}[section]
\newtheorem{lem}[thm]{Lemma}
\newtheorem{sublem}[thm]{Sublemma}
\newtheorem{prop}[thm]{Proposition}
\newtheorem{prob}[thm]{Conjecture}
\newtheorem{rem}[thm]{Remark}
\theoremstyle{definition}
\newtheorem{defn}[thm]{Definition}
\theoremstyle{remark}
\newcommand\realdensity[1]{real $(#1)$-density }
\def\realstardensity{real $*$-density}
\def\realstardense{real $*$-dense}
\newcommand\intervalN[2]{[#1\cdots #2)}
\def\R{\mathbb{R}}
\def\Z{\mathbb{Z}}
\def\N{\mathbb{N}}
\def\ab{\hspace*{2pt}}
\def\F{\mathbb{F}}
\def\n{0}
\def\e{1}
\def\m{\theta}
\def\ColN{C_\N}
\def\Col_R{C_\R}
\title{Almost all orbits of an analogue of the Collatz map on the reals attain bounded values}
\author{Manuel Inselmann
}
\begin{document}
	\maketitle
	\begin{abstract}
		\noindent Motivated by a balanced ternary representation of the Collatz map we define the map $C_\mathbb{R}$ on the positive real numbers by setting  $C_\mathbb{R}(x)=\frac{1}{2}x$ if $[x]$ is even and $C_\mathbb{R}(x)=\frac{3}{2}x$ if $[x]$ is odd, where $[x]$ is defined by $[x]\in\mathbb{Z}$ and $x-[x]\in(-\frac{1}{2},\frac{1}{2}]$. We show that there exists a constant $K>0$ such that the set of $x$ fulfilling  $\liminf_{n\in\N}C_\mathbb{R}^n(x)\leq K$ is Lebesgue-co-null. We also show that for any $\epsilon>0$ the set of $x$ for which $  (\frac{3^{\frac{1}{2}}}{2})^kx^{1-\epsilon}\leq C_\mathbb{R}^k(x)\leq (\frac{3^{\frac{1}{2}}}{2})^kx^{1+\epsilon}$ for all $0\leq k\leq \frac{1}{1-\frac{\log_23}{2}}\log_2x$ is large for a suitable notion of largeness.
	\end{abstract}

	\section{Introduction}
	The Collatz map $\text{Col}:\N+1\rightarrow\N+1$ takes an even number $n$ to $\frac{n}{2}$ and an odd number to $3n+1$. For $n\in\N+1$ define $\text{Col}_{\min}(n)=\min_{k\in\N}\text{Col}^k(n)$. The \textit{Collatz conjecture} states that $Col_{min}(n)=1$ for every $n\in\N+1$. Note that whenever $n\in\N$ is odd, then $\text{Col}(n)$ is even. Thus one can define a version of $\ColN:\N+1\rightarrow\N+1$ which takes $n$ to $\frac{1}{2}n$ if $n$ is even and to $\frac{3n+1}{2}$, if $n$ is odd. An heuristic approach suggests that when we iterate $\ColN$ for $k$ times, then roughly $\frac{k}{2}$ of the set $\{\ColN^i\mid 0\leq i<k\}$  will be odd and the other half will be even, thus expecting $\ColN^k(n)$ to be close to $(\frac{3^{\frac{1}{2}}}{2})^kn$. See \cite{lagarias} for an overview of this problem. In \cite{Tao} Terrence Tao showed that for any function $f:\N\rightarrow \R$ such that $\lim_{n\rightarrow\infty}f(n)=\infty$, the set of $n$ such that $\text{Col}_{\min}(n)<f(n)$ is of logarithmic density $1$.
	
	In this paper we look at an analogue of $\text{Col}$ defined on the positive real numbers, which is more amenable to an analysis, but at the same time shows a similar behavior to the original Collatz map. If $x\in(0,\infty)$, let $[x]$ be the closest natural number to $x$, i.e., $x-[x]\in(-\frac{1}{2},\frac{1}{2}]$. Now, define $\Col_R(x):=\frac{x}{2}$ if $[x]$ is even and $\Col_R(x):=\frac{3x}{2}$ if $[x]$ is odd. The motivation for this map is coming from the balanced ternary representation of the real numbers: Each real number $x$ can be written as $x=\sum_{k\in\Z}a_k3^k$ where $a_k\in\{\e,\n,\m\}$ and $\#\{k\in\N\mid a_k\neq\n\}<\infty$ (we follow the convention to denote $-1$ by $\m$). This representation is unique, unless it ends in a constants sequence of $\e$s or a constant sequence of $\m$s, in which case there a two representations (in the following we will tacitly assume that we always choose the representation that ends in a constant sequence of $\e$s when there are two options). This representation allows for a neat formulation of the Collatz map as a map on the set of words in the alphabet $\Sigma=\{\e,\n,\m\}$ (which to the author's knowledge has not been published): Let a word of the form $a_na_{n-1}\cdots a_0$ represent the integer $\sum_{k=0}^na_k3^k$ (with $a_k\in\Sigma$). If $w$ is a word we recursively define $w^{*n}$ for $n\in\N$ to be the empty word, when $n=0$ and $w^{*(n+1)}=w^{*n}w$. Denote $a(0)^{*n}b$ by $B^n_{a,b}$ for $a,b\in\{\e,\m\}$ and $n\in\N$ (in this paper we include $0\in\N$). Note that a word $a_na_{n-1}\cdots a_0$ represents an even number if and only if $\sum_{k=0}^{n}a_k$ is even. Thus the representation of each positive even number $m$ can be uniquely written as $B_1\n^{*n_1}B_2\n^{*n_2}...B_k\n^{*n_k}$, where each $B_i$ is a block of the form $B^n_{a,b}$ and the $n_i$ are natural numbers (we assume that the first letter in the representation is not equal to $\n$). Now, the representation of $\frac{m}{2}$ can be derived as follows: Each block of the form $\n^{*n_i}$ is left unchanged. If a block is of the form $\e(\n)^{*j}\m$, then replace it by $\n(\e)^{*(j+1)}$, if it is of the form $\e(\n)^{*j}\e$, then replace it by $\e(\m)^{*{j+1}}$, if it is of the form $\m(\n)^{*j}\e$, then replace it by $\n(\m)^{*(j+1)}$, and if it is of the form $\m(0)^{*j}\m$, then replace it by $\m(\e)^{*(j+1)}$. That the resulting word represents $\frac{m}{2}$ follows from the fact that $\frac{3^j-1}{2}=\sum_{k=0}^{j-1}3^k$ and $\frac{3^j+1}{2}=3^j-\sum_{k=0}^{j-1}3^k$ for all $j\in\N$. If $w$ represents a odd number $m$ then $w\e$ represents $3m+1$. Thus the Collatz map $\ColN$ on the positive natural numbers corresponds to the map $C$ defined on the words $w$ with letters from $\Sigma$ beginning with $\e$  and applying the above procedure, if $w$ has an even number of letters distinct from $\n$ and applying the procedure to $w\e$ otherwise. Finally, if now the first letter is $\n$, then delete it.
	
	 As an example take $\e\n\m\n\e\n\n\e\m$. Since the parity of the number of non-zero letters is $1$, we add a $\e$ to the right and obtain  $(\e\n\m)\n(\e\n\n\e)(\m\e)$ as the block representation. Applying the algorithm we obtain $(\n\e\e)\n(\e\m\m\m)(\n\m)$ and deleting the leading zero we get to the final result  $\e\e\n\e\m\m\m\n\m$. In this fashion the sequence $3,5,8,4,2,1$ corresponds to
	$\e\n,\e\m\m, \e\n\m,\e\e, \e\m, \e$.
	
	Instead of looking at $\Sigma=\{\e,\n,\m\}$ one may look at $\Sigma^\prime=\{\e,\n\}$ and define a map on the set words with letters in $\Sigma^\prime$ as follows: Denote $\e(0)^{*n}\e$ by $B^n$ for $n\in\N$. Call a word \textit{even}, if it contains an even number of $\e$s. Every even word can be written as ${\n^{*n_0}}B_1{\n^{*n_1}}B_{\n^{*n_2}}...B_k{\n^{*n_k}}$, where each $B_i$ is a block of the form $B^n$ and the $n_i$ are natural numbers. Now define a map $T$ as follows: 
	Each block of the form $\n^{*n_i}$ is left unchanged and if a block is of the form $\e(\n)^{*j}\e$, replace it by $\n(\e)^{*(j+1)}$. If a word is not even, put a $\e$ to its right end and apply the above algorithm. This map can be represented in as map on $\F_2[x]$: Under the correspondence that sends $a_na_{n-1}\cdots a_0$ to the polynomial $\sum_{k=0}^na_kx^k$ and noting that $\frac{1+x^n}{1+x}=\sum_{k=0}^{n-1}x^k$ we have that $T$ corresponds to the map $S:\F_2[x]\rightarrow\F_2[x]$ defined by 
	$$S(f)=\begin{cases}
		\frac{f}{x+1}&\text{\ab if\ab} f(1)=0,\\
		\frac{xf+1}{x+1}&\text{\ab if\ab} f(1)=1.
	\end{cases}$$\\
	Now $S$ is conjugate via the isomorphism $f(x)\mapsto f(x+1)$ to the map $S_0:\F_2[x]\rightarrow\F_2[x]$ defined by 
	$$S_0(f)=\begin{cases}
		\frac{f}{x}&\text{\ab if\ab} f(0)=0,\\
		\frac{(x+1)f+1}{x}&\text{\ab if\ab} f(0)=1.
	\end{cases}$$\\
And $S_0$ is a accelerated version of the map
$S_1:\F_2[x]\rightarrow\F_2[x]$ defined by 
 	$$S_1(f)=\begin{cases}
 	\frac{f}{x}&\text{\ab if\ab} f(0)=0,\\
 	(x+1)f+1&\text{\ab if\ab} f(0)=1.
 \end{cases}$$\\
$S_1$ was first studied in \cite{Hicks} as a polynomial analogue of the Collatz map and further investigated in (among others) \cite{alon}. The above derivation of it gives even more reason to consider it an analogue of the Collatz map. The dynamics of $S$ is studied in \cite{inselmann}.

	One aspect that makes the Collatz map  difficult to analyze is the addition of $1$ in the odd case. The balanced ternary approach suggests a map where this difficulty vanishes by passing to words that are infinite to the right, which correspond to the real numbers: If $x\in\R$ has the balanced ternary representation $x=\sum_{k\in\Z}a_k3^k$ where $a_k\in\{\e,\n,\m\}$ and $N$ is the maximal non-$\n$ coefficient, then we may write $x$ as $a_N\cdots a_0\cdot a_{-1}a_{-2}\cdots$. Then the block representation is always possible except for the countable set, where the representation ends in $\n^{*\infty}$ and the parity is odd. In this case we send a block of the form $\e(\n)^{*\infty}$ to $0\e^{*\infty}$ and a block of the form $\m(\n)^{*\infty}$ to $\m\e^{*\infty}$. Then what $\Col_R$ does is applying the above procedure and moving $\cdot$ one step to the right, exactly when it lies inside a block of the form $B^n_{a,b}$ for $a,b\in\{\e,\m\}$ (or of the form $\e(\n)^{*\infty}$ or $\m(\n)^{*\infty}$).

	As in the original problem we may ask what happens if we iterate $\Col_R$.  We restrict ourselves to positive real numbers, since -- unlike $\ColN$ -- we have $\Col_R(-r)=-\Col_R(r)$ (except for a countable set of sequences ending in $\e^{\infty}$, which does not matter for the long term behavior since each orbit of $\Col_R$ contains at most on of these exceptional points). Furthermore, $\Col_R([0,\frac{1}{2^n}))= [0,\frac{1}{2^{n+1}})$ for $n\in\N+1$. Thus we can restrict ourselves to the interval $(\frac{1}{2},\infty)$.

	Note that $\Col_R((\frac{3}{4},\frac{3}{2}])=(\frac{9}{8},\frac{9}{4}]$ and $\Col_R((\frac{3}{2},\frac{9}{4}])=(\frac{3}{4},\frac{9}{8}]$. Thus $\Col_R((\frac{3}{4},\frac{9}{4}])=(\frac{3}{4},\frac{9}{4}]$ and $\Col_R$ is bijective when restricted to $(\frac{3}{4},\frac{9}{4}]$. Thus $\Col_R$ restricted to $(\frac{3}{4},\frac{9}{4}]$  resembles the restriction of $\ColN$ to $\{1,2\}$. 
	
	If $r\in (\frac{1}{2},\frac{3}{4})$ then $\Col_R(r)\in (\frac{3}{4},\frac{9}{8}]$. The obvious question now is if this interval is also always reached when starting with greater values:
	\begin{prob}\label{totaltree}
		Suppose that $r>\frac{9}{4}$. Then there is a $n>0$ such that $\Col_R^n(r)\in (\frac{3}{4},\frac{9}{4}]$.
	\end{prob}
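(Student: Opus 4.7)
My plan combines a no-cycles observation with the paper's main theorem to reduce the conjecture to an essentially symbolic question, and then to acknowledge the remaining obstacle. First, note that $C_\R$ has no periodic orbit of positive period. Indeed, if $C_\R^k(r)=r$ with $a$ halving steps and $b=k-a$ multiplications by $3/2$ along the cycle, then $r=(1/2)^a(3/2)^b r$, forcing $3^b=2^k$, which has no solution for $k\geq 1$. Consequently, every orbit failing to enter $(3/4,9/4]$ is aperiodic and is trapped in $(9/4,\infty)$ from some point onwards. Writing $E:=\{r>9/4:C_\R^n(r)\notin(3/4,9/4]\text{ for all }n\geq 0\}$, the main theorem of the paper yields $\lambda(E)=0$ (after an elementary refinement if the constant $K$ there happens to exceed $9/4$).

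To upgrade $\lambda(E)=0$ to $E=\emptyset$, I would pass to the symbolic description via balanced-ternary expansions laid out in the introduction. An orbit of $C_\R$ starting at $r$ is determined by (and determines) the balanced-ternary expansion of $r$ together with the sequence of positions of the ``decimal point'' after each iterate. The condition $r\in E$ then corresponds to a precise combinatorial restriction on the digit sequence: the decimal point may never enter a configuration that would reduce the leading block to a value in $(3/4,9/4]$. The goal becomes to prove that no such balanced-ternary expansion exists, and one would hope to find a monovariant on the digit sequence (some weighted count of leading non-zero letters, say) which strictly decreases under the block-rewrite rules described in the introduction and which therefore cannot survive indefinitely.

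The main obstacle, and the reason I expect this conjecture to be genuinely hard, is precisely this last step. Although the ``average'' drift $\log(\sqrt{3}/2)<0$ strongly suggests orbits should eventually decrease into the trapping region, enforcing this pointwise rather than almost-everywhere requires ruling out every bounded or unbounded aperiodic orbit in $(9/4,\infty)$. This is a real-valued analogue of the Collatz conjecture itself: the expanding factor $3/2$ prevents a naive Lyapunov argument from working uniformly, and the discontinuities of $C_\R$ at the half-integers mean that small perturbations of $r$ can send the symbolic itinerary along radically different paths. I would therefore not expect a complete proof without substantially new ideas, likely a deeper arithmetic analysis of the balanced-ternary digit dynamics introduced in the paper, and the statement should probably be regarded as genuinely open.
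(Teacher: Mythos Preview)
The statement you are addressing is Conjecture~\ref{totaltree} in the paper; it is explicitly left open, and the paper contains no proof of it. Your closing assessment---that the problem should be regarded as genuinely open and is a real-valued analogue of the Collatz conjecture itself---is exactly the paper's position, so on that point you and the paper agree.

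There is, however, a substantive error in your intermediate reasoning. You claim that $\lambda(E)=0$ follows from Theorem~\ref{K} ``after an elementary refinement if the constant $K$ there happens to exceed $9/4$''. This refinement is not elementary; it is precisely Conjecture~\ref{verylikely}. The paper's Theorem~\ref{K} (proved via Theorem~\ref{Bound}) only yields \emph{some} bound $K$, and the argument gives no control forcing $K\le 9/4$. Lowering $K$ to $9/4$ is singled out as a separate open problem, supported only by the numerical evidence described in Remark~\ref{test}; the paper even observes that if $K\le 9/4$ fails, then $\limsup_{n}\Col_R^n(x)=\infty$ on a co-null set. So even the almost-everywhere version of the conjecture---that $\lambda(E)=0$---is not established by the paper, and your proposed reduction rests on an unproven statement.

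Your no-cycles observation is correct (and is implicit in Proposition~\ref{T}, since $T$ is conjugate to an irrational rotation), but it does not bring the pointwise question closer to resolution: ruling out periodic orbits still leaves the possibility of aperiodic orbits wandering in $(9/4,\infty)$, which is exactly the difficulty you identify at the end.
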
	
	This seems a much more tractable question than the analogous one for $\ColN$. In this paper we give a partial result. Recall that for Lebesgue-measurable subsets $A\subseteq B$ of $\R$, $A$ is \textit{Lebesgue-co-null in} $B$ if $\lambda(B\setminus A)=0$.
	\begin{thm}\label{K}
		There exists $K>0$ such that $\{x\in{(\frac{3}{4},\infty)}\mid \liminf_{n\in\N} \Col_R^n(x)\leq K\}$ is a Lebesgue-co-null subset of $(\frac{3}{4},\infty)$.
	\end{thm}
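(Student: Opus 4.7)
The plan is to take $K = 9/4$ and exploit the forward-invariance of $(0, 9/4]$ together with the piecewise-affine structure of $C_\mathbb{R}$. First, I would verify that $(0, 9/4]$ is forward-invariant under $C_\mathbb{R}$: the three cases $x \in (0, 1/2]$, $(1/2, 3/2]$, and $(3/2, 9/4]$ give $C_\mathbb{R}(x) \in (0, 1/4]$, $(3/4, 9/4]$, and $(3/4, 9/8]$ respectively. Hence $\liminf_n C_\mathbb{R}^n(x) \leq 9/4$ is equivalent to the forward orbit entering $(0, 9/4]$ at some finite time, and by $\sigma$-finiteness of Lebesgue measure it suffices to show, for each $M > 9/4$, that the decreasing family $E_n^M := \{x \in (3/4, M] : C_\mathbb{R}^k(x) > 9/4 \text{ for all } k \leq n\}$ satisfies $\lim_n \lambda(E_n^M) = 0$.

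Next I would encode $E_n^M$ via a parity count. Since each step multiplies by $1/2$ or $3/2$, one has $C_\mathbb{R}^n(x) = (3^{m_n(x)}/2^n)\,x$, where $m_n(x)$ counts the odd-parity steps in the first $n$ iterations. The condition $x \in E_n^M$ forces $m_k(x)/k \geq 1/\log_2 3 + O(1/k)$ for every $k \leq n$, and since $1/\log_2 3 \approx 0.63 > 1/2$ the proof reduces to a concentration estimate bounding the Lebesgue measure of the set on which $m_n(x)/n$ stays above $1/\log_2 3$. I would partition $(3/4, M]$ by integer itineraries $\tilde{\omega} = ([x], [C_\mathbb{R}(x)], \ldots, [C_\mathbb{R}^{n-1}(x)])$: each cell $A_{\tilde{\omega}}$ is an interval on which $C_\mathbb{R}^n$ is affine with Jacobian $3^{m(\tilde{\omega})}/2^n$, giving the fundamental size bound $|A_{\tilde{\omega}}| \leq 2^n/3^{m(\tilde{\omega})}$.

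A crucial structural input is that $C_\mathbb{R}$ semiconjugates, via $\pi(x) := \log_2 x \bmod \log_2 3$, to the irrational rotation $R : r \mapsto r - 1 \pmod{\log_2 3}$ on $[0, \log_2 3)$: both $\log_2(x/2) = \log_2 x - 1$ and $\log_2(3x/2) = \log_2 x + \log_2 3 - 1$ reduce to $\log_2 x - 1$ modulo $\log_2 3$. By unique ergodicity of $R$, the fractional log-level $\pi(C_\mathbb{R}^n(x))$ is equidistributed in $[0, \log_2 3)$ for every $x$. The main obstacle will be to convert this pointwise rotation-factor equidistribution into a Lebesgue-a.e. strong law of large numbers for the parity sequence $\mathbf{1}_{[C_\mathbb{R}^k(x)] \text{ odd}}$ with mean $1/2$, equivalently to prove the exponential decay $\lambda(E_n^M) \to 0$. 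A naive sum $\sum_{\tilde{\omega} : m(\tilde{\omega}) \geq \alpha n} 2^n/3^{m(\tilde{\omega})}$ over combinatorially-valid itineraries grows exponentially in $n$, since it overcounts itineraries whose cell $A_{\tilde{\omega}}$ is empty; the delicate step will be to use the rotation factor together with the branching structure at odd-integer preimages (or equivalently a transfer-operator/thermodynamic-formalism estimate) to eliminate these spurious itineraries and obtain genuine decay.
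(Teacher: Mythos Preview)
Your proposal has a genuine gap, and in fact aims at a strictly stronger statement than the paper proves. Taking $K=9/4$ and showing $\lambda(E_n^M)\to 0$ for every fixed $M$ is exactly Conjecture~\ref{verylikely} (once an orbit enters $(3/4,9/4]$ it stays there and is dense, so $\liminf \le 9/4$ forces $\liminf = 3/4$), which the paper leaves open and supports only by numerical experiment (Remark~\ref{test}). The ``delicate step'' you flag --- converting the rotation-factor equidistribution into a law of large numbers for the parity bits, or pruning the spurious itineraries by a transfer-operator estimate --- is precisely the unresolved part, and the rotation factor alone cannot supply it: knowing $\log_2 x \bmod \log_2 3$ tells you nothing about the parity of $[x]$.

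The paper's route is structurally different. Its key input (the proposition preceding Lemma~\ref{uniform}) is that on any interval $[r,r+2^N)$ the first $N$ parity bits are \emph{exactly} i.i.d.\ fair coins under Lebesgue measure, so Hoeffding applies directly --- but only for $N$ up to the log of the interval length, which on your fixed $(9/4,M]$ is useless for large $n$. Instead of fixing the interval, the paper lets it grow: it applies Hoeffding on intervals of length $\asymp 2^N/3^{(1/2+\epsilon)N}$, sums the exponentially small errors to obtain a set $B\subseteq(0,b)$ of \emph{positive} measure on which the parity sums are controlled at every scale, and then runs an induction via the partial order $x\preccurlyeq_3 y \Leftrightarrow y\in 3^{\N}x$ (Sublemma~\ref{sublemma}) to bound $C_{\R}^N\bigl(\tfrac{2^N}{3^{\lfloor(1/2+\epsilon)N\rfloor}}x\bigr)$ uniformly for $x\in B$. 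Only at the very end does the rotation enter, via ergodicity of $T$ on $(3/4,9/4]$, to upgrade the positive-measure set $\pi(B)$ to full measure. What this buys is an unconditional bound; what it costs is any control on the size of $K$.
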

	For the notion of meagerness an even stronger result holds:
	\begin{thm}\label{comeager subset}
		The set $\{x\in (\frac{3}{4},\infty)\mid \liminf_{n\rightarrow\infty}\Col_R(x)=\frac{3}{4}\}$ is a comeager subset of $(\frac{3}{4},\infty)$. 
	\end{thm}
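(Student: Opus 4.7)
The plan is to combine the forward invariance of $(\tfrac34,\infty)$ under $\Col_R$, which gives $\liminf_n \Col_R^n(x)\geq\tfrac34$ for free, with a Baire-category density argument yielding $\liminf\leq\tfrac34$ on a comeager set. The lower bound follows from a case split on the parity of $[y]$: for $y>\tfrac34$ either $[y]$ is odd and $\Col_R(y)=3y/2>\tfrac98$, or $[y]$ is even and positive, forcing $[y]\geq 2$, hence $y>\tfrac32$ and $\Col_R(y)=y/2>\tfrac34$. Thus $(\tfrac34,\infty)$ is forward invariant and it suffices to show that for all integers $k,N\geq 1$ the set
\[
W_{k,N}\;:=\;\bigcup_{n\geq N}\mathrm{int}\bigl(\Col_R^{-n}\bigl((\tfrac34,\tfrac34+\tfrac1k)\bigr)\bigr)
\]
is dense in $(\tfrac34,\infty)$, as then $\bigcap_{k,N}W_{k,N}$ is a dense $G_{\delta}$ on which $\liminf_n \Col_R^n=\tfrac34$.

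Step one analyses $T:=\Col_R|_{(\frac34,\frac94]}$. This is a bijection sending $A:=(\tfrac34,\tfrac32]$ onto $(\tfrac98,\tfrac94]$ by $x\mapsto 3x/2$ and $B:=(\tfrac32,\tfrac94]$ onto $(\tfrac34,\tfrac98]$ by $x\mapsto x/2$. The one-step transitions are $A\to\{A,B\}$ and $B\to\{A\}$; moreover $T^{2}\bigl((\tfrac34,1]\bigr)=(\tfrac{27}{16},\tfrac94]\subseteq B$ shows that the block $AAA$ is forbidden, and the resulting two-block transition graph on $\{AA,AB,BA\}$ is irreducible and aperiodic (the length-$2$ cycle $AB\to BA\to AB$ and the length-$3$ cycle $AA\to AB\to BA\to AA$ coexist), so $T$ is topologically mixing. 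Hence the set of $x\in(\tfrac34,\tfrac94]$ whose $T$-orbit is dense in $(\tfrac34,\tfrac94]$ is comeager, and since $\tfrac34$ is a limit point of $(\tfrac34,\tfrac94]$ such orbits visit every neighbourhood $(\tfrac34,\tfrac34+\tfrac1k)$ at arbitrarily large times; this already yields density of $W_{k,N}$ inside $(\tfrac34,\tfrac94]$.

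Step two, the main obstacle, is extending density to $(\tfrac94,\infty)$. This reduces to an \emph{entry statement}: for every open interval $I\subseteq(\tfrac94,\infty)$ there exist an open $J\subseteq I$ and $m\in\N$ with $\Col_R^{m}(J)\subseteq(\tfrac34,\tfrac94]$. Granting it, any open $I\subseteq(\tfrac34,\infty)$ meets $W_{k,N}$: choose such $J$ and $m$, apply step one to $\Col_R^{m}(J)$ to obtain $y$ whose $T$-orbit visits $(\tfrac34,\tfrac34+\tfrac1k)$ beyond time $\max(N-m,0)$, and pull $y$ back through the linear piece $\Col_R^{m}|_{J}$. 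I would prove the entry statement by an itinerary-perturbation argument: on $I$ the iterate $\Col_R^{n}$ is piecewise linear with slopes of the form $3^{s}/2^{n}$, where $s$ counts odd symbols in the associated itinerary prefix; since $\log_{2}3\notin\Q$, the set $\{3^{s}2^{-n}\}$ is dense in $(0,\infty)$ on a logarithmic scale, so for every $x_{0}\in I$ some pair $(s,n)$ places $3^{s}x_{0}/2^{n}$ inside $(\tfrac34,\tfrac94)$. The combinatorial crux is to exhibit such $(s,n)$ as the exponent data of an actual linearity piece of $I$ under $\Col_R^{n}$; the mechanism is that perturbing $x_{0}$ across a half-integer at step $i$ flips the itinerary symbol at $i$, so chaining such flips engineers the desired prefix. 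A cleaner alternative, should Theorem \ref{K} be available with $K\leq\tfrac94$, is to derive the entry statement directly from $\Col_R$-invariance of the non-entry set $\{x:\Col_R^{n}(x)>\tfrac94\text{ for all }n\geq 0\}$ combined with its Lebesgue-nullity.
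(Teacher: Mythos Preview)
Your overall architecture---forward invariance for the lower bound, density of orbits inside $(\tfrac34,\tfrac94]$, and an ``entry statement'' pushing open sets from $(\tfrac94,\infty)$ into the core---matches the paper's. But two points need attention.

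First, the mixing claim in Step~1 is false. By Proposition~\ref{T}, $T$ is conjugate (via $\log_3$) to an irrational rotation of the circle by $-\log_3 2$; irrational rotations are minimal and uniquely ergodic but never topologically mixing. What goes wrong in your argument is the implicit assumption that the itinerary map onto the $\{A,B\}$-subshift is surjective. It is not: its image is the Sturmian subshift of angle $\log_3 2$, a proper minimal (and non-mixing) subsystem of your SFT. The conclusion you actually need---that every orbit is dense in $(\tfrac34,\tfrac94]$---is nonetheless true, and follows immediately from minimality of the rotation; so this is a repairable error of justification rather than of substance.

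Second, and more seriously, the entry statement is the entire content of the theorem, and you have not proved it. Your sketch (``perturb across half-integers to engineer the itinerary'') is not obviously workable: flipping the symbol at step~$i$ changes the scaling factor and hence the position at all later steps, so you cannot independently adjust symbols, and there is no evident inductive control. Your alternative route via Theorem~\ref{K} with $K\le\tfrac94$ is unavailable: whether such a $K$ works is precisely Conjecture~\ref{verylikely}, left open in the paper.

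The paper's proof of the entry statement is quite different and worth studying. Given an open $V\subseteq(\tfrac34,\infty)$, shrink it to the form $3^{i+j}\cdot 3^{-i-j}(N-\tfrac12,N+\tfrac12)$ so that $3^{-i}V\subseteq(\tfrac34,\tfrac98]$. Use the density of the \emph{backward} orbit $\{T^{-M}(1):M\in\N\}$ (again from the irrational rotation) to find $M$ with $T^{-M}(1)\in 3^{-i}V$ and $2^M$ large. Since $T^{-M}(1)=2^M/3^m$ lies in the prescribed window, the balanced-ternary expansion of $2^M$ begins with that of $N$; this yields an inclusion $3^{l+i-p}(2^M-\tfrac12,2^M+\tfrac12)\subseteq V$ for appropriate $l,p$. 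On the inner interval one has $[x]=2^M$, hence $p(x)_0=\cdots=p(x)_{M-1}=0$ and $\Col_R^M(x)=x/2^M\in(\tfrac34,\tfrac94]$; by Lemma~\ref{liminf} the same holds after scaling down by $3^{p-l-i}$. That is the missing mechanism: rather than engineering a general itinerary, one finds inside any open set a scaled neighbourhood of a large power of~$2$, on which the first $M$ steps are all halvings.
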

Theorem \ref{K} leaves the question open what $K>0$ can be chosen. The best choice would be $K=\frac{3}{4}$. Thus we formulate the following conjecture. 
	\begin{prob}\label{verylikely}
		The set $\{x\in (\frac{3}{4},\infty)\mid \liminf_{n\in\N} C^n(x)=\frac{3}{4}\}$ is a Lebesgue-co-null subset of $(\frac{3}{4},\infty)$.
	\end{prob}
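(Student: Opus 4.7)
The plan is to split the conjecture into two independent parts. (a) Every orbit of $\Col_R$ which eventually enters the invariant interval $(\tfrac{3}{4},\tfrac{9}{4}]$ satisfies $\liminf_n \Col_R^n(x)=\tfrac{3}{4}$. (b) For Lebesgue-almost every $x\in(\tfrac{3}{4},\infty)$ some iterate $\Col_R^n(x)$ lies in $(\tfrac{3}{4},\tfrac{9}{4}]$. Using the identity $\Col_R((\tfrac{3}{4},\tfrac{9}{4}])=(\tfrac{3}{4},\tfrac{9}{4}]$ already recorded in the introduction, (a) and (b) together imply the conjecture.

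For (a) I would pass to logarithmic coordinates. Setting $y=\log_2 x$, the interval $(\tfrac{3}{4},\tfrac{9}{4}]$ becomes an interval of length $\log_2 3$. The two branches of $\Col_R$ described in the introduction translate into a shift by $+\log_2(3/2)$ on the left half (the image of $(\tfrac{3}{4},\tfrac{3}{2}]$) and a shift by $-1$ on the right half. Since $\log_2(3/2)+1=\log_2 3$, these two shifts agree modulo $\log_2 3$, so the system is conjugate to the rotation of the circle $\R/(\log_2 3)\Z$ by the angle $\log_2(3/2)$. This angle is irrational with respect to the circle length: a relation $q\log_2(3/2)=p\log_2 3$ rearranges to $3^{q-p}=2^q$, which is impossible for $q\geq 1$. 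Consequently the rotation is minimal, every orbit is dense, and each orbit visits every right-neighborhood $(\tfrac{3}{4},\tfrac{3}{4}+\varepsilon]$; hence $\liminf_n\Col_R^n(x)=\tfrac{3}{4}$ for every $x\in(\tfrac{3}{4},\tfrac{9}{4}]$.

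For (b) I would start by inspecting the proof of Theorem~\ref{K} to see whether its constant $K$ can in fact be chosen to be at most $\tfrac{9}{4}$. If so, almost every orbit visits $(\tfrac{3}{4},\tfrac{9}{4}]$ and, by forward invariance, remains there, so (a) completes the proof. Should the argument of Theorem~\ref{K} produce only a larger $K$, the backup plan is to iterate the descent: once an orbit is almost surely confined to the bounded set $(\tfrac{3}{4},K]$ infinitely often, use the heuristic decay factor $(\tfrac{\sqrt{3}}{2})^n$ together with the balanced-ternary symbolic picture from the introduction---so that Lebesgue-typical starting points correspond to generic digit sequences---to establish, via a Borel--Cantelli type estimate on the parities $[\Col_R^n(x)]\bmod 2$, that further descent into $(\tfrac{3}{4},\tfrac{9}{4}]$ occurs almost surely.

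The main obstacle is precisely part (b), which is a Lebesgue-measure-theoretic refinement of Conjecture~\ref{totaltree}. The ergodic analysis in (a) is essentially routine once the conjugation to an irrational rotation is observed. The trouble in (b) is that the $\tfrac{3}{2}$-branch can push orbits back above $\tfrac{9}{4}$ arbitrarily often, so the descent into the attractor cannot be made deterministic and must be controlled statistically through the distribution of the parities along the orbit. I expect that the bulk of the work lies in upgrading the $\liminf\leq K$ statement of Theorem~\ref{K} to $\liminf\leq \tfrac{9}{4}$, which would require a genuinely new concentration result on the balanced-ternary digits of typical $\Col_R$-orbits.
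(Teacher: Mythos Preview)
The statement you are addressing is a \emph{conjecture} in the paper (the \texttt{prob} environment is used for conjectures), and the paper explicitly states that it does not know a proof: ``Although we do not know a proof of Conjecture~\ref{verylikely}\ldots''. There is therefore no proof in the paper to compare against.

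Your decomposition into (a) and (b) is exactly the reduction the paper itself carries out. Part~(a) is correct and is essentially the content of Proposition~\ref{T}: the restriction $T$ of $\Col_R$ to $(\tfrac{3}{4},\tfrac{9}{4}]$ is conjugate to an irrational rotation, hence minimal, and every orbit in the invariant interval has $\liminf$ equal to $\tfrac{3}{4}$. Your logarithmic-coordinate argument reproduces that proposition.

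Part~(b), however, is precisely the open problem. The paper proves Theorem~\ref{K} only for \emph{some} unspecified $K>0$, and in Remark~\ref{test} it explains that showing one may take $K\le\tfrac{9}{4}$ would suffice, but offers only computer evidence (a Monte Carlo test with $3000$ samples) rather than a proof. Your ``backup plan''---iterating the descent from a bounded region via a Borel--Cantelli argument on parities---does not work as stated: once an orbit is only known to return infinitely often below $K$, the parity sequence from such return times is no longer distributed uniformly (Lemma~\ref{uniform} controls parities on intervals of length $2^N$, not on the conditional law of an orbit that has already been constrained), so the concentration input you need is unavailable. This is exactly the ``genuinely new concentration result'' you allude to in your last paragraph, and neither you nor the paper supplies it. In short, your proposal correctly isolates the difficulty but does not overcome it; the statement remains open.
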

	Although we do not know a proof of Conjecture \ref{verylikely},  we show that its truth is equivalent to a problem that is verifiable by a finite (although very large) number of calculations: It is sufficient to check that for a subset of $(\frac{3}{4},R)$ of measure $>\alpha$ we have that $\Col_R^M(x)\leq \frac{9}{8}$ for some $M\in\N$, some large $R$, and some appropriate $0<\alpha<1$, which is possible to do since $\Col_R$ is 'sufficiently continuous' in the sense that $\inf_{n\in\N}\Col_R^M(x)\leq \frac{9}{8}$ is true if it is true for all $y$ in an open neighborhood of $x$ (for most $x$ at least).  Instead of checking sufficiently many numbers one can independently test a finite number of $x\in(\frac{3}{4},R)$. A test with concrete values for $\alpha, R,C$  was performed and every single result was in the desired set. Assuming that the conjecture is false, i.e., the desired set has probability $\leq\alpha$ this outcome would have a probability of less than $10^{-100}$ giving very good reason that Conjecture \ref{verylikely} is true.

Our second type of results concerns approximations of the dynamics of $\Col_R$. To state these results we need the following notion of largeness which will become useful in the analysis:
	\begin{defn}\label{density}
		We say that a subset $A\subseteq\R$ has \textit{real density} $0\leq \lambda\leq 1$ if for every $\epsilon>0$ there exists $R_0>0$ such that $|\frac{\lambda(A\cap(0,R))}{R}-\alpha|<\epsilon$ for all $R\geq R_0$.\\
	We say that a subset $A\subseteq\R$ has \textit{\realdensity{C,D}}, if $C>0$, $0<D<1$, and $\frac{\lambda((0,R)\cap A)}{R}\geq 1-\frac{C}{R^D}$ for all $R\in(0,\infty)$. We say that $A$ is \textit{\realstardense} if it has \realdensity{C,D} for some $C>0$ and $0<D<1$. Note that any \realstardense\ab  set has real density $1$. 
	
	For $K>0$ and $x\in(\frac{3}{4},\infty)$ define its \textit{stopping time}  $\tau_K(x)$ to be the minimal $n\in\N$ such that $\Col_R^n(x)\leq K$ if such an $n$ exists and set $\tau_K(x)=\infty$ otherwise.
	
\end{defn}
Now we can state the other main results:

\begin{thm}\label{maintheoremdynamicintroductoin}
	Suppose that $\epsilon>0$. Then the set $\{x\in(0,\infty)\mid\forall 0\leq k\leq \frac{1}{1-\frac{\log_23}{2}}\log_2x:  (\frac{3^{\frac{1}{2}}}{2})^kx^{1-\epsilon}\leq \Col_R^k(x)\leq (\frac{3^{\frac{1}{2}}}{2})^kx^{1+\epsilon}\}$ is \realstardense.
\end{thm}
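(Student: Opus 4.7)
The plan is to take logarithms and reduce the theorem to a quantitative law of large numbers for the parity sequence along the $\Col_R$-orbit. For $x>0$ define $\sigma_i(x)\in\{0,1\}$ by $\sigma_i(x)=0$ when $[\Col_R^i(x)]$ is even and $\sigma_i(x)=1$ otherwise, and set $S_k(x):=\sum_{i=0}^{k-1}\sigma_i(x)$. Since $\Col_R$ multiplies by $\tfrac{1}{2}$ or $\tfrac{3}{2}$ (with no additive term), an easy induction gives $\Col_R^k(x)=x\cdot 3^{S_k(x)}\cdot 2^{-k}$. Taking $\log_2$, the two-sided inequality of the theorem is equivalent to $|S_k(x)-k/2|\le \epsilon\log_3 x$. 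Hence it suffices to show that the set of $x$ satisfying this for every $0\le k\le N(x):=\log_2 x/(1-(\log_2 3)/2)$ is \realstardense.

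The central step is a quantitative equidistribution of the cylinder sets $A_\sigma:=\{x>0:\sigma_i(x)=\sigma_i\text{ for all }i<k\}$ for $\sigma\in\{0,1\}^k$. The crucial simplification is that $\Col_R$ is homogeneous, so $\Col_R^k$ acts on each connected component of $A_\sigma$ as multiplication by the slope $m_\sigma:=3^{S_k(\sigma)}/2^k$, with no additive term. In particular $\Col_R^k$ is injective on $A_\sigma$ and its image has Lebesgue measure $m_\sigma\cdot\lambda(A_\sigma\cap(0,R))$. My goal would be to prove by induction on $k$ an estimate of the form
\[
\left|\frac{\lambda(A_\sigma\cap(0,R))}{R}-\frac{1}{2^k}\right|\ \le\ \mathrm{err}(\sigma,R,k),
\]
where the per-step contribution to the error comes from the fact that, on each component $J$ of $A_{\sigma|_{k-1}}$, the condition that $[\Col_R^{k-1}(x)]$ has the prescribed parity carves out of $J$ a subset whose relative measure differs from $1/2$ by at most $O(1/(m_{\sigma|_{k-1}}|J|))$. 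The $O(1)$ is the boundary defect coming from the period-$2$ parity partition of $\R$.

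Once such a cylinder estimate is in hand, I would combine it with the binomial Chernoff bound. Setting $T_k:=\{\sigma\in\{0,1\}^k:|S_k(\sigma)-k/2|>\epsilon\log_3 R\}$ one has $|T_k|/2^k\le 2\exp(-2\epsilon^2(\log_3 R)^2/k)$, and summing $\lambda(A_\sigma\cap(0,R))$ over $\sigma\in T_k$ gives a main term of $R\cdot|T_k|/2^k$ plus the accumulated equidistribution error. Since $k\le N(R)=O(\log R)$ the Chernoff exponent is at least $c(\epsilon)\log R$, making the main term $R^{1-c(\epsilon)}$. A union bound over the $O(\log R)$ admissible values of $k$ and a careful bookkeeping of the cylinder errors then yield that the exceptional set in $(0,R)$ has measure $O(R^{1-c'}\cdot\mathrm{polylog}(R))$, which is subsumed by $CR^{1-D}$ for any $D<c'$ and hence establishes \realdensity{C, D}.

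The main obstacle is controlling the equidistribution error for atypical cylinders where $S_k(\sigma)$ is far from $k/2$: for such $\sigma$ the slope $m_\sigma$ is exponentially small and the images $\Col_R^k(J)$ of individual components may fail to span even one unit interval, so the parity partition of $\R$ simply does not equidistribute on $\Col_R^k(J)$ at all. The cutoff $k\le N(x)$ is calibrated precisely so that for $\sigma$ with $S_k(\sigma)\approx k/2$ the typical orbit satisfies $\Col_R^k(x)\approx 1$, i.e., the equidistribution is used right at the boundary of its range of validity. The subtlety will be to arrange the argument so that the atypical $\sigma$ where equidistribution degrades are already discarded by the Chernoff tail bound; making this decoupling tight enough to yield a clean $O(R^{1-D})$ control on the exceptional set is where the real work lies.
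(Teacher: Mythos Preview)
Your reduction to the parity-sum deviation $|S_k(x)-k/2|\le\epsilon\log_3 x$ is correct and matches the paper. For $k\le\log_2 x$ the argument is in fact simpler than your sketch: by Lemma~\ref{parity2} the parity sequence is \emph{exactly} $2^N$-periodic in $x$ for its first $N$ entries, so on any interval of length $2^N$ the push-forward to $\{0,1\}^N$ is exactly uniform (Lemma~\ref{uniform}) --- there is no error term to track at all, and Hoeffding (Lemma~\ref{Hoeffding}) applies directly.

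The genuine gap is the range $\log_2 x<k\le N(x)$, which is most of the target range since $N(x)\approx 4.82\,\log_2 x$. There $2^k>R$, the cylinder period exceeds the scale of the interval, and your estimate $\lambda(A_\sigma\cap(0,R))/R\approx 2^{-k}$ has no content: individual cylinder measures on $(0,R)$ are uncontrolled. Your proposed fix --- discard the atypical $\sigma$ by Chernoff and trust equidistribution on the rest --- is circular, because invoking Chernoff at level $k$ presupposes the level-$k$ cylinder uniformity you no longer have; and on a fixed good cylinder $A_{\sigma|_{\lfloor\log_2 R\rfloor}}$ the image under $\Col_R^{\lfloor\log_2 R\rfloor}$ already has length $m_\sigma\ll 1$, so your per-component error bound $O(1/(m_\sigma|J|))$ becomes useless at the very next step. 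The paper (Theorem~\ref{maintheoremdynamics}) does not attempt a single-scale cylinder bound in this range. Instead it \emph{iterates}: after $\lfloor\log_2 x\rfloor$ steps the orbit typically sits near scale $x^{(\log_2 3)/2}$, and Lemma~\ref{IterationR} shows that if a set $S$ is \realstardense\ then so is the set of $x$ for which $\Col_R^{\lfloor\log_2 x\rfloor}(x)$ lies, up to a small power of $3$, in $S$. Applying this with $S=S^\lambda_\epsilon$ (the set where the desired bound holds for all $k\le\frac{1-\lambda}{1-(\log_2 3)/2}\log_2 x$) upgrades $\lambda$ to $q\lambda$ for a fixed $q\in(\tfrac{\log_2 3}{2},1)$; starting from the trivial case $\lambda=1$ and iterating drives $\lambda\to 0$. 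This geometric bootstrapping across scales is the missing ingredient in your plan.
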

\begin{thm}\label{stoppingtime}
	There exists $K>0$ such that the set $\{x\in(\frac{3}{4},\infty)\mid (\frac{1}{1-\frac{\log_23}{2}}-\epsilon)\log_2x\leq \tau_K(x)\leq (\frac{1}{1-\frac{\log_23}{2}}+\epsilon)\log_2x\}$ has real density $1$ for every $\epsilon>0$.
\end{thm}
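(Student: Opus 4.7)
Set $c := 1 - \log_2(3)/2 > 0$, so that $\log_2(3^{1/2}/2) = -c$ and the target estimate reads $\tau_K(x)/\log_2 x \in (\tfrac{1}{c} - \epsilon, \tfrac{1}{c} + \epsilon)$. My plan is to derive both bounds from Theorem \ref{maintheoremdynamicintroductoin}, writing $G_{\epsilon'}$ for the \realstardense{} set produced by that theorem with parameter $\epsilon'$.

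For the lower bound, I would choose $\epsilon' := c\epsilon/2$. Given $x \in G_{\epsilon'}$, any $k \le \log_2(x)/c$ with $\Col_R^k(x) \le K$ must satisfy $(3^{1/2}/2)^k x^{1-\epsilon'} \le K$, i.e.\ $k \ge ((1-\epsilon')\log_2 x - \log_2 K)/c$, which exceeds $(\tfrac{1}{c} - \epsilon)\log_2 x$ once $\log_2 x \ge 2\log_2(K)/(c\epsilon)$. Since every \realstardense{} set has real density $1$, the lower inequality holds on a set of real density $1$.

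For the upper bound, the obstacle is that at $k_0 := \lfloor \log_2(x)/c \rfloor$ Theorem \ref{maintheoremdynamicintroductoin} only yields $\Col_R^{k_0}(x) \le x^{\epsilon_1}$, which may still exceed $K$ for large $x$. I would iterate: let $x_0 := x$ and inductively set $x_{i+1} := \Col_R^{k_i}(x_i)$ with $k_i := \lfloor \log_2(x_i)/c \rfloor$. On the event that every $x_i$ lies in the scale-appropriate copy of $G_{\epsilon_1}$, we get $x_{i+1} \le x_i^{\epsilon_1} \le x^{\epsilon_1^{i+1}}$, so $x_I \le K$ after $I = O(\log\log x)$ rounds, while the total time telescopes to $\sum_i k_i \le \log_2(x)/(c(1-\epsilon_1))$. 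Picking $\epsilon_1$ with $1/(c(1-\epsilon_1)) < 1/c + \epsilon$ then gives $\tau_K(x) \le (\tfrac{1}{c} + \epsilon)\log_2 x$.

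The hard part is showing that the intersection of the events $\{x_i \in G_{\epsilon_1}\}$ for $i = 0,\ldots,I$ has real density $1$ in $(3/4, R)$. Because $\Col_R$ is piecewise linear with Jacobian in $\{1/2, 3/2\}$, a crude preimage estimate inflates the measure of bad sets by up to $(8/3)^{k_0}$, a large polynomial in $R$, whereas the \realstardense{} complement only decays like $R^{-D}$ per level, so a naive union bound over the $O(\log\log R)$ levels does not close. To overcome this, one must exploit that the bad sets are not arbitrary but correspond to very specific symbolic trajectories (with atypical balance of $3/2$- versus $1/2$-branches over consecutive windows) and adapt the concentration argument underlying Theorem \ref{maintheoremdynamicintroductoin} to control the joint distribution of those counts across the windows. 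Alternatively, one might try to strengthen Theorem \ref{maintheoremdynamicintroductoin} directly so that the approximation persists on $[0, (1+\epsilon)\log_2(x)/c]$, from which the upper bound would follow in a single application.
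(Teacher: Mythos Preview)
Your lower-bound argument is correct and is exactly what the paper does (via the reformulation in Theorem~\ref{reform}): on the \realstardense\ set from Theorem~\ref{maintheoremdynamicintroductoin}, the lower envelope $(\frac{3^{1/2}}{2})^k x^{1-\epsilon'}$ stays above $K$ for all $k\le(\tfrac{1}{c}-\epsilon)\log_2 x$ once $x$ is large.

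The upper bound, however, has a genuine gap, and it is the gap you yourself flag. Your iteration scheme requires the event $\{x_i\in G_{\epsilon_1}\}$ to hold simultaneously for $i=0,\dots,I$ with $I\asymp\log\log x$, and you correctly observe that a naive preimage bound blows up polynomially while the complement of $G_{\epsilon_1}$ only decays like $R^{-D}$. Neither of your two suggested fixes is carried out, and neither is how the paper proceeds. In particular, the strengthened form of Theorem~\ref{maintheoremdynamicintroductoin} proved in the paper (Theorem~\ref{maintheoremdynamics}) does extend the range of $k$ to $(\tfrac{1}{c}+\theta)\log_2 x$, but only for some small $\theta>0$ tied to the approximation parameter, so it does not by itself force $\Col_R^k(x)\le K$ in that range.

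The paper obtains the upper bound by an entirely different route: it invokes Theorem~\ref{ratetomin}, which asserts directly that $\{x:\min_{n\le(\frac{1}{c}+\epsilon)\log_2 x}\Col_R^n(x)\le K\}$ has real density~$1$. That theorem is not a consequence of Theorem~\ref{maintheoremdynamicintroductoin}; it is proved from the machinery behind Theorem~\ref{Bound}. The key input is Sublemma~\ref{sublemma}, which shows that on a set $B\subseteq(0,c)$ of positive measure one has $\Col_R^N(\tfrac{2^N}{3^{\lfloor(\frac12+\delta)N\rfloor}}x)\preccurlyeq_3 3^{O(1)}x$ for \emph{every} $N$; this recursive control replaces your iteration and avoids the measure blow-up because it is a pointwise statement on a single fixed set $B$. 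One then uses Theorem~\ref{K} to find $M$ so that a further $M$ steps bring $3^{O(1)}x$ below $K$ on all but a $\gamma$-fraction of $(0,c)$, takes $\gamma$ arbitrarily small, and pushes these estimates out to $(0,R)$ via Lemma~\ref{partiondensity}. The constant $K$ in the theorem is therefore the one supplied by Theorem~\ref{Bound}, not something produced by the approximation theorem.
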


In the last section we will associate an acyclic directed graph $G_x$ to each $x\in \R$ such that Conjecture \ref{verylikely} is true if and only if the set of $x\in (\frac{3}{4},\infty)$ such that $G_x$ has exactly one connected component is Lebesgue-co-null in  $(\frac{3}{4},\infty)$. 
\section{Almost all reals attain bounded values}
	In the introduction we already noticed that  $\Col_R$ bijectively maps $I_0=(\frac{3}{4},\frac{9}{4}]$ to itself.
	We denote the restriction of $\Col_R$ to $I_0$ by $T$. This map $T$ has some very nice properties, which will be of great help to obtain the main results. We gather them in the following proposition:
	\begin{prop}\label{T}
		$T$ is uniquely ergodic, i.e., there exists a unique probability measure $\mu$ on the Borel subsets of $I_0$ with the property that $\mu(T^{-1}(A))=\mu(A)$ for every Borel set $A\subseteq I_0$.  The corresponding invariant measure $\mu$ is equivalent to the Lebesgue measure restricted to $I_0$. Furthermore, for all $x\in I_0$ the sets $\{T^n(x)\mid n\in\N\}$ and $\{T^{-n}(x)\mid n\in\N\}$ are dense in $I_0$.
	\end{prop}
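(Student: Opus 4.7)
The plan is to conjugate $T$ to an irrational rotation of a circle by passing to logarithmic coordinates, after which every conclusion in the proposition is a classical statement about rotations.

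First I would write the two branches of $T$ explicitly. For $x\in(\tfrac{3}{4},\tfrac{3}{2}]$ one has $[x]=1$ and $T(x)=\tfrac{3x}{2}$, while for $x\in(\tfrac{3}{2},\tfrac{9}{4}]$ one has $[x]=2$ and $T(x)=\tfrac{x}{2}$; a quick check shows these two branches bijectively cover $I_0=(\tfrac{3}{4},\tfrac{9}{4}]$, as already observed in the introduction. Passing to $y=\log x$, $I_0$ becomes $J:=(\log\tfrac{3}{4},\log\tfrac{9}{4}]$, an interval of length $\log 3$, and $T$ is conjugated to the piecewise translation $\widetilde{T}$ given by $y\mapsto y+\log\tfrac{3}{2}$ on $(\log\tfrac{3}{4},\log\tfrac{3}{2}]$ and $y\mapsto y-\log 2$ on $(\log\tfrac{3}{2},\log\tfrac{9}{4}]$.

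Next I would identify $J$ with the circle $\R/(\log 3)\Z$ by gluing the endpoints. Since $-\log 2\equiv\log\tfrac{3}{2}\pmod{\log 3}$, both branches of $\widetilde{T}$ agree under this identification with the single rotation $R_\alpha\colon z\mapsto z+\alpha$ for $\alpha=\log\tfrac{3}{2}$; the half-open conventions in the definition of $T$ are precisely what make this agreement hold pointwise rather than only a.e. The rotation number $\tfrac{\alpha}{\log 3}=1-\log_3 2$ is irrational, because $\log_3 2$ is irrational (any equation $2^a=3^b$ with integers $a,b$ forces $a=b=0$ by unique factorization). Classical results on irrational circle rotations, e.g.\ via Weyl equidistribution, then give that $R_\alpha$ is uniquely ergodic with normalized Haar (= Lebesgue) measure as its unique invariant probability measure, and that the forward orbit $\{R_\alpha^n(z)\}_{n\in\N}$ and the backward orbit $\{R_\alpha^{-n}(z)\}_{n\in\N}$ of every point $z$ are dense in the circle.

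Finally I would pull these facts back through $y=\log x$. Unique ergodicity is preserved under topological conjugacy, and the unique invariant probability measure for $T$ becomes $d\mu=\tfrac{1}{x\log 3}\,dx$ on $I_0$; since $\tfrac{1}{x\log 3}$ is bounded between two positive constants on the compact interval $[\tfrac{3}{4},\tfrac{9}{4}]$, $\mu$ is equivalent to Lebesgue measure restricted to $I_0$. Density of $\{T^n(x)\}_{n\in\N}$ and $\{T^{-n}(x)\}_{n\in\N}$ for every $x\in I_0$ transfers directly from the corresponding density statements for $R_\alpha$. The only delicate point in the whole argument is the bookkeeping at $y=\log\tfrac{3}{2}$ (and at the endpoints of $J$) to check that the identification with the circle really conjugates $\widetilde{T}$ to $R_\alpha$ as maps, not merely almost everywhere; the half-open conventions fixed in the definition of $T$ make this painless, and once it is in place the classical rotation theory carries everything through.
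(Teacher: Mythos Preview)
Your proof is correct and follows essentially the same route as the paper: conjugate $T$ via a logarithm to an irrational rotation (you use the natural log and identify $J$ with $\R/(\log 3)\Z$, the paper uses $\log_3$ and a translation to land on $(0,1]$, but this is only a rescaling), then invoke the classical unique ergodicity and minimality of irrational rotations and pull back the Haar measure to obtain $d\mu=\tfrac{1}{x\log 3}\,dx$. Your explicit remark that the half-open conventions make the conjugacy hold pointwise and your check that the density is bounded away from $0$ and $\infty$ on $I_0$ (hence $\mu$ is equivalent to Lebesgue) are exactly the small verifications the paper leaves implicit.
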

	\begin{proof}
		The proposition follows from the fact that $T$ is conjugate via a homeomorphism to an irrational rotation and it is well known that irrational rotations are uniquely ergodic and are minimal:
		Via the homeomorphism $\log_3(\cdot):I_0\rightarrow (1-2\log_3(2),2-2\log_3(2)]$,
		$T$	is conjugate to the map: $\tau:(1-2\log_3(2),2-2\log_3(2)]\rightarrow(1-2\log_3(2),2-2\log_3(2)] $ defined by \\$\tau(x)=\begin{cases}
			x+1-log_3({2})& \text{if}\ab x\in (1-2\log_3({2}),1-\log_3({2})] \\
			x-log_3({2})& \text{if}\ab x\in (1-\log_3({2}),2-2\log_3({2})] .
		\end{cases}$\\
		and this in conjugate via the translation given by $x\mapsto x-1+2\log_3(2)$ to:
		\\$\tau^\prime(x)=\begin{cases}
			x+1-log_3({2})& \text{if}\ab x\in (0,\log_3({2})] \\
			x-log_3({2})& \text{if}\ab x\in (\log_3({2}),1] .
		\end{cases}$\\
		Now $\tau^\prime$ is an irrational rotation and the claim follows noting that the push-forward of the Lebesgue measure restricted to $(0,1]$ under the conjugate map has Lebesgue density $f(x)=\frac{1}{x\log(3)}$.
	\end{proof}
	The following notion is simple but will be of essential help later.
	\begin{defn}
		Define	$x\preccurlyeq_3 y$ if there exists $n\in \N$ such that $3^n\cdot x=y$. We will also sometimes write $y\succcurlyeq_3 x$ if  $x\preccurlyeq_3 y$.
	\end{defn}
	We gather some if its properties in a lemma:
	\begin{lem}\label{liminf}
		For every $x\preccurlyeq_3 y$: 
		\begin{enumerate}
			\item  $\Col_R^k(x)\preccurlyeq_3\Col_R^k(y)$ for all $k\in\N$,
			\item 	$\inf_{n\in\N} \Col_R^n(x)\leq\inf_{n\in\N} \Col_R^n(y)$,
			\item 	$\liminf_{n\in\N} \Col_R^n(x)\leq\liminf_{n\in\N} \Col_R^n(y)$.
		\end{enumerate}
		
	\end{lem}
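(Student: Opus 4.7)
The plan is to prove (1) by induction, after which (2) and (3) are immediate. Everything reduces to the one-step statement
\[
\Col_R(x)\ \preccurlyeq_3\ \Col_R(3x)\qquad\text{for every }x>0.
\]
Once this is in hand, a first induction on $n$ gives $\Col_R(x)\preccurlyeq_3\Col_R(3^n x)$ (apply the base case to $3^{j-1}x$ for $j=1,\dots,n$ and compose the resulting powers of $3$), which is precisely $\Col_R(x)\preccurlyeq_3\Col_R(y)$ whenever $x\preccurlyeq_3 y$. A second induction on $k$, invoking this fact at each stage, then produces $\Col_R^k(x)\preccurlyeq_3\Col_R^k(y)$ for all $k\in\N$, which is (1).

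The entire content therefore lies in the one-step comparison, which I would establish by a four-way case analysis on the parities of $[x]$ and $[3x]$. Writing $\{x\}:=x-[x]\in(-\tfrac{1}{2},\tfrac{1}{2}]$, the bound $3\{x\}\in(-\tfrac{3}{2},\tfrac{3}{2}]$ forces $[3x]=3[x]+\delta$ with $\delta\in\{-1,0,1\}$; explicitly $\delta=0$ precisely when $\{x\}\in(-\tfrac{1}{6},\tfrac{1}{6}]$, so $[3x]$ and $[x]$ share parity exactly when $\{x\}\in(-\tfrac{1}{6},\tfrac{1}{6}]$. Computing $\Col_R(x)$ and $\Col_R(3x)$ in each combination then yields: $\Col_R(3x)=3\Col_R(x)$ when $[x]$ and $[3x]$ have the same parity (whether both even or both odd); $\Col_R(3x)=9\Col_R(x)$ when $[x]$ is even and $[3x]$ is odd; and $\Col_R(3x)=\Col_R(x)$ when $[x]$ is odd and $[3x]$ is even. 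In every case the ratio lies in $\{3^0,3^1,3^2\}$, establishing the one-step comparison.

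Parts (2) and (3) follow immediately from (1): $x\preccurlyeq_3 y$ implies $x\le y$ for positive reals, so (1) gives $\Col_R^k(x)\le\Col_R^k(y)$ for every $k\in\N$, and taking the infimum or the liminf in $k$ on both sides yields the two desired inequalities. The only real obstacle is bookkeeping the four cases of the core calculation with the correct conventions for the half-open interval $(-\tfrac{1}{2},\tfrac{1}{2}]$ at boundary values such as $\{x\}\in\{-\tfrac{1}{6},\tfrac{1}{6},\tfrac{1}{2}\}$; nothing deeper than case-wise arithmetic is required.
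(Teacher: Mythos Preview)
Your proposal is correct and follows exactly the paper's approach: reduce to $y=3x$, $k=1$, verify that $\Col_R(3x)/\Col_R(x)\in\{1,3,9\}$, and deduce (2) and (3) from (1). The paper simply asserts the ratio fact ``by definition'' without writing out the four parity cases, whereas you spell out the case analysis explicitly; the underlying argument is the same.
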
 
\begin{proof}
	To see $1.$ it suffices to consider the case $y=3x$ and $k=1$. But by definition $\frac{\Col_R(3x)}{\Col_R(x)}\in\{1,3,9\}$ and the claim follows. $2.$ and $3.$ are immediate consequences from $1.$
\end{proof}
	Now we are already in a position to prove (a slightly stronger version of) Theorem \ref{comeager subset}.
	\begin{thm}\label{comeager}
		The set  $\{x\in(\frac{3}{4},\infty)\mid \liminf_{n\in\N} C^n(x)=\frac{3}{4}\}$ is comeager in $
	(\frac{3}{4},\infty)$ and the set $\{x\in(\frac{3}{4},\frac{9}{4}]\mid \forall k\in\N: \liminf_{n\in\N} C^n(3^kx)=\frac{3}{4}\}$ is comeager in $(\frac{3}{4},\frac{9}{4}]$.
	\end{thm}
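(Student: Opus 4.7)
The plan is to reduce both assertions to the single claim that for every $k\in\N$, the set
\[S_k:=\{x\in I_0\mid\liminf_{n\in\N}C_\R^n(3^kx)=\tfrac34\}\]
is comeager in $I_0$. Since $(\tfrac34,\infty)=\bigsqcup_{k\in\N}3^kI_0$ as a disjoint union of intervals (up to countably many boundary points) and $x\mapsto 3^kx$ is a homeomorphism $I_0\to 3^kI_0$, the first assertion is equivalent to saying $S_k$ is comeager in $I_0$ for every $k$; by Baire's theorem, the countable intersection $\bigcap_kS_k$ is then also comeager, giving the second assertion.

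For $k=0$, Proposition \ref{T} makes the forward $T$-orbit of every $x\in I_0$ dense in $I_0$, so $\liminf_n T^n(x)=\inf I_0=\tfrac34$ and $S_0=I_0$. For $k\ge 1$, Lemma \ref{liminf} lets me write $C_\R^n(3^kx)=3^{k_n(x)}T^n(x)$ for a non-negative integer sequence $k_n(x)$ with $k_0(x)=k$. A case analysis on the parities of $[T^n(x)]$ and $[3^{k_n(x)}T^n(x)]$ shows that $k_{n+1}(x)-k_n(x)\in\{-1,0,+1\}$, and that $k_n(x)=0$ forces $k_m(x)=0$ for all $m\ge n$. Since $T^n(x)>\tfrac34$, the equality $\liminf_n C_\R^n(3^kx)=\tfrac34$ is equivalent to the existence of some $n$ with $k_n(x)=0$, i.e.\ $C_\R^n(3^kx)\in I_0$. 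Hence $S_k=\bigcup_{n\ge 0}\{x\in I_0\mid C_\R^n(3^kx)\in I_0\}$ is open away from the countable set of discontinuities, and the task reduces to proving that $S_k$ is dense.

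Proving this density is the main step and the main obstacle. The decrement $k_{n+1}(x)=k_n(x)-1$ happens precisely when $[T^n(x)]$ is odd and $[3^{k_n(x)}T^n(x)]$ is even, so for every $L\ge 1$ the set $D_L:=\{z\in I_0\mid[z]\text{ odd and }[3^\ell z]\text{ even for }\ell=1,\dots,L\}$ is a nonempty open subset of $I_0$ (an elementary ternary-grid calculation). Given a nonempty open $J\subseteq I_0$, my plan is to use the density of both forward and backward $T$-orbits supplied by Proposition \ref{T} to build, for $L$ sufficiently large, a descending chain $J=J_0\supseteq J_1\supseteq\cdots\supseteq J_k$ of nonempty open subsets together with times $n_1<n_2<\cdots<n_k$ such that every $x\in J_k$ satisfies $T^{n_j}(x)\in D_L$ and $k_{n_j}(x)\in\{1,\dots,L\}$ for each $j$. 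Each such visit forces a decrement, and after $k$ decrements the level has reached $0$, so $x\in S_k\cap J$. The delicate technical challenge is to keep the level from climbing above $L$ between the scheduled decrements: an unlucky string of increment steps could push $k_n$ out of $\{1,\dots,L\}$ and spoil a later decrement. The plan for handling this is to choose $L$ comfortably larger than $k$ and, when passing from $J_{j-1}$ to $J_j$, to further restrict by requiring the orbit to stay in the "no-change" parity configuration (equal parities of $[T^n(x)]$ and $[3^{k_n(x)}T^n(x)]$) at every intermediate step between $n_{j-1}$ and $n_j$; density of backward $T$-orbits is what guarantees that this auxiliary restriction still leaves a nonempty open set on which to continue the recursion.
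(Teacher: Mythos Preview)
Your reduction to proving that each $S_k$ is dense in $I_0$ is sound, as is the level-tracking reformulation $C_\R^n(3^kx)=3^{k_n(x)}T^n(x)$ with $k_{n+1}-k_n\in\{-1,0,+1\}$ and the observation that $S_k$ is open off a countable set. The gap lies in the last paragraph. Requiring the $T$-orbit to remain in the ``no-change'' set $E_\ell=\{z\in I_0:[z]\text{ and }[3^\ell z]\text{ have equal parity}\}$ at every intermediate step $n_{j-1}<m<n_j$ cannot work in general: $T$ is conjugate to an irrational rotation and, for each $\ell\ge1$, the complement $I_0\setminus E_\ell$ has nonempty interior. Minimality of $T$ on the compact circle then yields a uniform $N_0=N_0(\ell)$ such that every $T$-orbit segment of length $N_0$ meets $I_0\setminus E_\ell$, hence $\bigcap_{m=0}^{N_0-1}T^{-m}(E_\ell)=\emptyset$. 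But you have no upper bound on $n_j-n_{j-1}$: the first time an orbit starting in the possibly tiny set $J_{j-1}$ reaches $D_L$ may well exceed $N_0$, whereupon your auxiliary restriction leaves the empty set. Invoking density of backward $T$-orbits does not help---density lets you hit a target once, not confine a long orbit segment to a proper subset. Enlarging $L$ only shrinks $D_L$ and lengthens the return time, so the circularity you flagged is genuine and unresolved.

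The paper sidesteps level-tracking entirely. Given any nonempty open $V\subseteq(\tfrac34,\infty)$ it uses density of the backward $T$-orbit of $1$ (Proposition~\ref{T}) to place inside $V$ an interval of the form $3^{-j}(2^M-\tfrac12,\,2^M+\tfrac12)$ with $j\ge0$ and $M$ large. On $(2^M-\tfrac12,\,2^M+\tfrac12)$ the first $M$ parities are all even, so $C_\R^M$ is simply division by $2^M$ and lands in $I_0$; monotonicity under $\preccurlyeq_3$ (Lemma~\ref{liminf}) together with the forward invariance $C_\R((\tfrac34,\infty))\subseteq(\tfrac34,\infty)$ then carries this conclusion down to the $3^{-j}$-scaled copy. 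That produces a dense open subset of $(\tfrac34,\infty)$ on which some iterate lies in $I_0$---exactly the density of your $S_k$---without ever needing to control a run of intermediate parities.
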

	\begin{proof}
		We show that every non-empty open set $V$ of $(\frac{3}{4},\infty)$ contains a non-empty open subset $U\subseteq V$ such that $\liminf_{n\in\N} C^n(x)=\frac{3}{4}$ for all $x\in U$. By making $V$ smaller we can assume that $3^{-i}V\subseteq (\frac{3}{4},\frac{9}{8}]$ for some $i\in\N$. By making $V$ even smaller we can assume that $3^{-i}V=3^{-i-j}(N-\frac{1}{2},N+\frac{1}{2})$ for some $j,N\in\N$. Since $\{T^{-n}(1)\mid n\in\N\}$ is dense in $(\frac{3}{4},\frac{9}{4}]$, we can find $M\in\N$ such that $T^{-M}(1)\in 3^{-i}V$ and $2^M>3^{i+2}N$. If the balanced ternary representation of $N$ is $\sum_{k=0}^la_k3^k$ for some $l\in\N$ and $a_k\in\{\e,\m,\n\}$ with $a_l=\e$, then $T^{-M}(1)\in 3^{-i}V$ and $2^M>N$ implies that the balanced ternary representation of $2^M$ begins with that of $N$, i.e., if $2^M=\sum_{k=0}^pb_k3^k$ for some $p\in\N$ and $a_j\in\{\e,\m,\n\}$ with $a_p=\e$, then $b_{p-k}=a_{l-k}$ for $0\leq k\leq l$. This implies that $3^{l-p}(2^M-\frac{1}{2},2^M+\frac{1}{2}) \subseteq (N-\frac{1}{2},N+\frac{1}{2})$. Thus $3^{l+i-p}(2^M-\frac{1}{2},2^M+\frac{1}{2}) \subseteq V$ but $l+i\leq \log_3(3^i3N)\leq \log_3 \frac{2^M}{3}\leq p$, thus $l+i\leq p$. But by definition for every $x\in(2^M-\frac{1}{2},2^M+\frac{1}{2}) $ we have $\Col_R^M(x)=\frac{x}{2^M}\in(\frac{3}{4},\frac{9}{4}]$. By $1.$ of Lemma \ref{liminf} this holds as well for every $x\in3^{l+i-p}(2^M-\frac{1}{2},2^M+\frac{1}{2})$. Thus we have our desired subset by noticing that $\liminf_{n\in\N} C^n(x)=\frac{3}{4}$ for every $x\in (\frac{3}{4},\frac{9}{4}]$. Thus the union $W$ of all open sets $U$ with the property that $\liminf_{n\in\N} C^n(x)=\frac{3}{4}$  for all $x\in U$ is dense open, thus comeager. Thus for all $n\in\N$ the set $W\cap 3^n(\frac{3}{4},\frac{9}{4}]$ is comeager in $3^n(\frac{3}{4},\frac{9}{4}]$ thus he set $3^{-n}W\cap(\frac{3}{4},\frac{9}{4}]$ is comeager in $(\frac{3}{4},\frac{9}{4}]$ for all $n\in\N$ hence $\bigcap_{n\in\N}(3^{-n}W\cap(\frac{3}{4},\frac{9}{4}])$ is also comeager in $(\frac{3}{4},\frac{9}{4}]$ which proves the second part.
	\end{proof}
	
	Analogously to the original Collatz map we define the \textit{parity sequence} of a real number $x$ to be the sequence $(p(x)_n)_{n\in\N}$, where $$p(x)_n=\begin{cases}
		0& \text{if}\ab [\Col_R^n(x)]\text{\ab is\ab even},\\
		1& \text{if}\ab [\Col_R^n(x)]\text{\ab is\ab odd}.
	\end{cases}$$\\
We have the following lemma.
	\begin{lem}\label{parity}
		$\Col_R^N(x)=\frac{3^{\sum_{n=0}^{N-1}p(x)_n}}{2^N}\cdot x$ for all $N\in\N$ and $x\in\R$.
	\end{lem}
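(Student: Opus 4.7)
The plan is a straightforward induction on $N$, using nothing more than the definition of $\Col_R$ and of the parity sequence.

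The base case $N=0$ is trivial since $\Col_R^0(x)=x$ and the exponent of $3$ is an empty sum while $2^0=1$. For the inductive step, the key observation is that the definition of $\Col_R$ can be written uniformly as
\begin{equation*}
\Col_R(y) \;=\; \frac{3^{q(y)}}{2}\,y,
\end{equation*}
where $q(y)=0$ if $[y]$ is even and $q(y)=1$ if $[y]$ is odd. Applied at $y=\Col_R^n(x)$ this gives $q(y)=p(x)_n$ by the definition of the parity sequence, hence $\Col_R^{n+1}(x)=\frac{3^{p(x)_n}}{2}\cdot \Col_R^n(x)$.

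Plugging this recursion into the induction hypothesis for $N$ yields
\begin{equation*}
\Col_R^{N+1}(x)\;=\;\frac{3^{p(x)_N}}{2}\cdot\frac{3^{\sum_{n=0}^{N-1}p(x)_n}}{2^N}\cdot x\;=\;\frac{3^{\sum_{n=0}^{N}p(x)_n}}{2^{N+1}}\cdot x,
\end{equation*}
which completes the induction. There is no real obstacle here; the lemma is essentially a bookkeeping statement that records the multiplicative factor accumulated along the orbit in terms of the parity sequence, and will be used later to turn statistical statements about the $p(x)_n$ into growth estimates for $\Col_R^N(x)$.
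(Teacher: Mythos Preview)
Your proof is correct and follows essentially the same approach as the paper: induction on $N$, with the inductive step using the uniform formula $\Col_R(y)=\frac{3^{p(y)_0}}{2}y$ applied at $y=\Col_R^N(x)$ together with $p(\Col_R^N(x))_0=p(x)_N$. There is nothing to add.
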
 
	\begin{proof}
		We proceed by induction, the case $N=0$ being trivially true. Then	$\Col_R^{N+1}(x)=\frac{3^{p(\Col_R^N(x))_0}}{2}\Col_R^N(x)=\frac{3^{p(x)_N}}{2}\frac{3^{\sum_{n=0}^{N-1}p(x)_n}}{2^N}\cdot x=\frac{3^{\sum_{n=0}^{N}p(x)_n}}{2^N}\cdot x$.  
	\end{proof}
	Now we show that - as in the classical case - the parity sequences of $x$ and $x+2^N\cdot z$ do coincide for the first $n$ values.
	\begin{lem}\label{parity2}
		Suppose that $x\in\R$, $z\in\Z$, $N\in\N$. Then the following hold:
		\begin{enumerate}
			\item $p(x)_n=p(x+2^N\cdot z)_n$ for all $0\leq n< N$.
			\item In particular, $\Col_R^n(x+2^N\cdot z)=\Col_R^n(x)+3^{\sum_{i=0}^{n-1}p(x)_i}2^{N-n}\cdot z$ for all , and $0\leq n\leq N$.
		\end{enumerate}
	\end{lem}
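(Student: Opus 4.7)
The plan is to prove (1) and (2) jointly by induction on $n$, using (2) at step $n$ to deduce (1) at step $n$, and then combining these to derive (2) at step $n+1$. The base case $n = 0$ of (2) is immediate because the empty sum makes $3^0 = 1$, and the formula reduces to the tautology $x + 2^N z = x + 2^N z$.

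For the inductive step with $n < N$, I would first extract (1) at step $n$ from (2) at step $n$. The identity in (2) says that $\Col_R^n(x + 2^N z)$ and $\Col_R^n(x)$ differ by $3^{\sum_{i=0}^{n-1} p(x)_i} \cdot 2^{N-n} \cdot z$. The key observation is that since $n < N$ forces $N - n \geq 1$, and since $3^{(\cdot)}$ is odd, this difference is an even integer. Therefore $[\Col_R^n(x + 2^N z)]$ and $[\Col_R^n(x)]$ differ by an even integer and hence share the same parity, which is precisely the statement $p(x + 2^N z)_n = p(x)_n$. Then I would apply $\Col_R$ to both sides: because (1) at step $n$ guarantees the same multiplicative factor $\tfrac{3^{p(x)_n}}{2}$ in each case, the linear formula in (2) propagates, with the exponent of $3$ in the numerator growing by $p(x)_n$ and the exponent of $2$ shrinking by one, yielding exactly the claim at step $n+1$.

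I do not expect any genuine obstacle. The argument is a direct inductive translation of the familiar congruence behavior of the classical Collatz map, and the only subtlety is the index restriction $n < N$, which is exactly what ensures the perturbation $3^{(\cdot)} \cdot 2^{N-n} \cdot z$ stays an even integer at every stage so that parities are preserved under $\Col_R$.
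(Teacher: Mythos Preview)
Your proposal is correct and follows essentially the same approach as the paper: both arguments hinge on the observation that, for $n<N$, the difference $3^{\sum_{i<n}p(x)_i}\,2^{N-n}z$ is an even integer, so the parities at step $n$ coincide and the recursion propagates. The only cosmetic difference is that the paper invokes the closed formula of Lemma~\ref{parity} at each inductive step to obtain (2), whereas you apply $\Col_R$ once to pass from (2) at step $n$ to (2) at step $n+1$; these are equivalent maneuvers.
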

	\begin{proof}
		The second statement follows from the first together with Lemma \ref{parity}.
		We proof the first statement by induction. It is true for all  $x\in\R$, $z\in\Z$, $N\in\N$ and $n=0$. If it is true for $0<n<N$, then we obtain by Lemma \ref{parity}:  $\Col_R^{n}(x+2^N\cdot z)=\frac{3^{\sum_{i=0}^{n-1}p(x+2^N\cdot z)_i}}{2^n}(x+2^{N}\cdot z)$ and again by Lemma \ref{parity} and by induction hypothesis we conclude that $\Col_R^{n}(x+2^N\cdot z)=\frac{3^{\sum_{i=0}^{n-1}p(x+2^N\cdot z)_i}}{2^n}(x+2^{N}\cdot z)=\frac{3^{\sum_{i=0}^{n-1}p(x)_i}}{2^n}(x+2^{N}\cdot z)=\Col_R^n(x)+3^{\sum_{i=0}^{n-1}p(x)_i}2^{N-n}\cdot z$. Since $n<N$, we have $3^{\sum_{i=0}^{n-1}p(x)_i}2^{N-n}\cdot z\in 2\Z$ and thus $p(x)_n=p(x+2^n\cdot z)_n$.
	\end{proof}
In the following let $\intervalN{a}{b}$ denote the set $[a,b)\cap\N$ and let $I^J$ denote the set of all sequences from $J$ to $I$.
	\begin{prop}
		Suppose that $N\in\N$, $r\in\R$, and $(a_n)_{n< N}\in\{0,1\}^{\intervalN{0}{N}}$, then $\lambda(\{x \in [r,r+2^N)\mid (p(x)_n)_{n< N}=(a_n)_{n< N}\})=1$.
	\end{prop}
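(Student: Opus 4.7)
The plan is to prove the statement by induction on $N$. The base case $N=0$ is immediate: the interval $[r, r+1)$ has Lebesgue measure $1$ and the parity condition is vacuous.

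For the inductive step, fix $(a_n)_{n<N+1}$ and $r \in \R$. Let
\[
B := \{x \in [r, r+2^{N+1}) : (p(x)_n)_{n<N} = (a_n)_{n<N}\}.
\]
Splitting $[r, r+2^{N+1})$ into its two halves $[r, r+2^N)$ and $[r+2^N, r+2^{N+1})$ and applying the induction hypothesis to each (with the truncated sequence $(a_n)_{n<N}$) yields $\lambda(B \cap [r, r+2^N)) = \lambda(B \cap [r+2^N, r+2^{N+1})) = 1$. The crux of the argument is the following shift-flip identity: by Lemma \ref{parity2}(1), the translation $x \mapsto x + 2^N$ restricts to a measure-preserving bijection between these two pieces of $B$, and outside a null set it flips the $N$-th parity, i.e., $p(x+2^N)_N = 1 - p(x)_N$. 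For the flip, Lemma \ref{parity2}(2) gives $\Col_R^N(x) = (3^k/2^N)x$ on $B$ with $k := \sum_{n<N} a_n$, whence $\Col_R^N(x + 2^N) = \Col_R^N(x) + 3^k$; since $3^k$ is odd, this shifts the nearest integer of $\Col_R^N(x)$ by an odd amount and thus flips its parity.

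Let $A := \{x \in [r, r+2^{N+1}) : (p(x)_n)_{n<N+1} = (a_n)_{n<N+1}\}$ denote the target set, and let $A'$ be the analogous set obtained by replacing $a_N$ with $1-a_N$. The shift-flip yields $\lambda(A \cap [r+2^N, r+2^{N+1})) = \lambda(A' \cap [r, r+2^N))$. Since $A$ and $A'$ partition $B$, so that $B \cap [r, r+2^N)$ is the disjoint union of $A \cap [r, r+2^N)$ and $A' \cap [r, r+2^N)$, one concludes
\[
\lambda(A) = \lambda(A \cap [r, r+2^N)) + \lambda(A' \cap [r, r+2^N)) = \lambda(B \cap [r, r+2^N)) = 1.
\]
The only technical nuisance is the countable set of $x$ for which some $\Col_R^n(x) \in \Z + \tfrac{1}{2}$, where the parity is ambiguous and the linear identity $\Col_R^N(x+2^N) = \Col_R^N(x) + 3^k$ may fail; this exceptional set, being countable, has Lebesgue measure zero and can be discarded throughout. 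The main conceptual content is thus the observation that adding the odd constant $3^k$ toggles the parity — everything else is a careful bookkeeping of how a period-$2^N$ structure refines to a period-$2^{N+1}$ one.
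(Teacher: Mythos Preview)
Your proof is correct and follows essentially the same route as the paper's: induction on $N$, with the inductive step driven by the shift--flip identity $\Col_R^N(x+2^N)=\Col_R^N(x)+3^k$ from Lemma~\ref{parity2}, so that the two halves of $[r,r+2^{N+1})$ contribute complementary pieces summing to the measure of $B\cap[r,r+2^N)=1$. One minor remark: your caveat about an exceptional countable set where the linear identity ``may fail'' is unnecessary, since Lemma~\ref{parity2} is stated and proved for all $x\in\R$ without exception, and the map $y\mapsto[y]$ satisfies $[y+o]=[y]+o$ for every odd integer $o$ even when $y\in\Z+\tfrac12$.
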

	\begin{proof}
		Again, we proceed by induction. The case $N=0$ is trivially true. Suppose that the statement is true for $N$ and $r\in \R$ and every $(c_n)_{n< N}\in\{0,1\}^{\intervalN{0}{N}}$. Suppose that $(a_n)_{n< N+1}\in\{0,1\}^{\intervalN{0}{N+1}}$. By hypothesis $\lambda(\{x \in [r,r+2^N)\mid (p(x)_n)_{n< N}=(a_n)_{n< N}\})=1$. The set $\{x \in [r,r+2^N)\mid (p(x)_n)_{n< N+1}=(a_n)_{n< N+1}\}$ is a Borel set and thus measurable. Let $\alpha=\lambda(\{x \in [r,r+2^N)\mid (p(x)_n)_{n< N+1}=(a_n)_{n< N+1}\})$. Let $(b_n)_{n< N+1}$ be defined by $b_N=1-a_N$ and $b_n=a_n$ for $0\leq n\leq N-1$. By Lemma \ref{parity2} we have $\Col_R^N(x+2^N)=\Col_R^N(x)+3^{\sum_{i=0}^{N-1}a_i}$ for all $x\in [r,r+2^N)$ such that $ (p(x)_n)_{n< N}=(a_n)_{n< N}$. Since $3^{\sum_{i=0}^{N-1}a_i}$ is odd, we get $p(x)_N=1-p(x+2^N)_N$ for all $x\in [r,r+2^N)$ with $(p(x)_n)_{n< N}=(a_n)_{n< N}$. Since the Lebesgue measure is invariant under translation, we obtain that  $\alpha=\lambda(\{x \in [r+2^N,r+2^{N+1})\mid (p(x)_n)_{n< N}=(b_n)_{n< N+1}\})$, $1-\alpha=\lambda(\{x \in [r+2^N,r+2^{N+1})\mid (p(x)_n)_{n< N+1}=(a_n)_{n< N+1}\})$, and $1-\alpha=\lambda(\{x \in [r,r+2^{N})\mid (p(x)_n)_{n< N+1}=(b_n)_{n< N+1}\})$. Thus $\lambda(\{x \in [r,r+2^{N+1})\mid (p(x)_n)_{n< N+1}=(a_n)_{n< N+1}\})=\lambda(\{x \in [r,r+2^N)\mid (p(x)_n)_{n< N}=(a_n)_{n< N+1}\})+\lambda(\{x \in [r+2^N,r+2^{N+1})\mid (p(x)_n)_{n< N}=(a_n)_{n< N+1}\})=1$.
	\end{proof}
	\begin{defn}
		For any non-empty interval $I\subseteq(0,\infty)$ of finite length let $\mu_{I}$ denote the probability measure defined on $I$ by $\mu_I(A)=\frac{\lambda(A)}{\lambda(I)}$ for any Lebesgue-measurable set $A\subseteq I$.
	\end{defn}
	Reformulating the result we obtain:
	\begin{lem}\label{uniform}
		The push-forward measure of $\mu_{[r,r+2^N)}$ under the map $x\mapsto (p(x)_n)_{0\leq n<N}$ is the uniform measure on $\{0,1\}^{\{0,\cdots,N-1\}}$.
	\end{lem}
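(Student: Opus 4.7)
The plan is to observe that this lemma is essentially a direct repackaging of the preceding proposition, so the work is almost entirely notational. First I would note that the codomain $\{0,1\}^{\{0,\ldots,N-1\}}$ is a finite set of cardinality $2^N$, so a probability measure on it is determined by its values on singletons, and the uniform measure assigns mass $2^{-N}$ to each $(a_n)_{n<N}$.

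Next I would verify measurability of $x \mapsto (p(x)_n)_{0 \le n<N}$: each $p(x)_n$ is a Borel function of $x$ because $\Col_R^n$ is Borel (in fact piecewise affine with countably many pieces determined by the location of $\Col_R^i(x)$ relative to the half-integers), and $p(x)_n$ only depends on whether $[\Col_R^n(x)]$ is even or odd, i.e., on which of a countable collection of half-open intervals $\Col_R^n(x)$ lies in. Hence the preimages of singletons in $\{0,1\}^{\{0,\ldots,N-1\}}$ are Borel subsets of $[r,r+2^N)$, as was already tacitly used in the proposition.

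Then I would compute the pushforward on a singleton: for any $(a_n)_{n<N} \in \{0,1\}^{\{0,\ldots,N-1\}}$,
\[
\mu_{[r,r+2^N)}\bigl(\{x \in [r,r+2^N) : (p(x)_n)_{n<N} = (a_n)_{n<N}\}\bigr)
= \frac{\lambda(\{x \in [r,r+2^N) : (p(x)_n)_{n<N} = (a_n)_{n<N}\})}{\lambda([r,r+2^N))} = \frac{1}{2^N},
\]
where the numerator equals $1$ by the previous proposition and the denominator equals $2^N$ by definition of Lebesgue measure on an interval of length $2^N$. This matches the value of the uniform measure on the singleton $\{(a_n)_{n<N}\}$, and since a measure on a finite set is determined by its values on singletons, the two measures coincide.

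There is essentially no obstacle here; the substantive content lives in the preceding proposition, which was proved by induction using Lemma \ref{parity2} to pair up $x$ and $x+2^N$ and show that their parity bits at step $N$ are complementary. The only thing one has to be slightly careful about is not confusing $\lambda$ (Lebesgue measure, giving the preimage measure $1$) with $\mu_{[r,r+2^N)}$ (the normalized probability, giving mass $2^{-N}$), and writing the final equation in a way that makes the factor of $2^N$ visible.
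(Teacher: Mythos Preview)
Your proposal is correct and matches the paper's approach exactly: the paper introduces this lemma with the phrase ``Reformulating the result we obtain'' and gives no separate proof, so the content is precisely the normalization argument you spell out. Your added remarks on measurability and on not confusing $\lambda$ with $\mu_{[r,r+2^N)}$ are reasonable elaborations of what the paper leaves implicit.
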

	The preceding lemma is crucial for what follows. We continue with the following lemma:
	\begin{lem}\label{Hoeffding}
		Suppose that $(a,b)\subseteq \R$ is a non-empty interval, $\epsilon>0$, and $0\leq N\leq \lceil\log_2(b-a)\rceil$. Then
		$\mu_{(a,b)}(\{x\in(a,b)\mid \sum_{k=0}^{N-1}p(x)_k\geq(\frac{1}{2}+\epsilon)\cdot N\})\leq 2e^{-2\epsilon^2N}$ and $\mu_{(a,b)}(\{x\in(a,b)\mid \sum_{k=0}^{N-1}p(x)_k\leq (\frac{1}{2}-\epsilon)\cdot N\})\leq 2e^{-2\epsilon^2N}$.
	\end{lem}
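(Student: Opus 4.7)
The plan is to reduce the statement to the classical Hoeffding inequality for i.i.d.\ Bernoulli$(1/2)$ random variables, with Lemma \ref{uniform} serving as the bridge between parity sequences of $\Col_R$-iterates and coin flips. The point is that on any half-open interval of length exactly $2^N$, Lemma \ref{uniform} asserts that the first $N$ parities are distributed as $N$ i.i.d.\ fair bits, so Hoeffding applies immediately; the only work is to handle a general interval $(a,b)$ by suitable padding and dyadic partitioning.

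First I would pad $(a,b)$ to a dyadic interval. Set $M=\lceil\log_2(b-a)\rceil$, so that $2^{M-1}<b-a\leq 2^M$. Then $(a,b)\subseteq[a,a+2^M)$ and the length ratio is at most $2$, hence $\mu_{(a,b)}(E)\leq 2\,\mu_{[a,a+2^M)}(E)$ for any Borel set $E$; this is where the factor of $2$ in the claimed bound $2e^{-2\epsilon^2 N}$ originates. Since $N\leq M$, I then decompose $[a,a+2^M)$ into the $2^{M-N}$ sub-intervals $I_j=[a+j2^N,a+(j+1)2^N)$ for $0\leq j<2^{M-N}$, so that $\mu_{[a,a+2^M)}(E)$ is precisely the arithmetic mean of the $\mu_{I_j}(E)$.

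Now I would apply Lemma \ref{uniform}: for each $j$ the push-forward of $\mu_{I_j}$ under $x\mapsto(p(x)_n)_{0\leq n<N}$ is the uniform measure on $\{0,1\}^N$. Taking $E$ to be the event $\sum_{n=0}^{N-1}p(x)_n\geq(\tfrac{1}{2}+\epsilon)N$, each $\mu_{I_j}(E)$ therefore equals the probability that a sum of $N$ i.i.d.\ Bernoulli$(1/2)$ variables exceeds its mean by at least $\epsilon N$, which by Hoeffding's inequality is at most $e^{-2\epsilon^2 N}$. Averaging over $j$ preserves this bound, and combining with the padding factor of $2$ yields $\mu_{(a,b)}(E)\leq 2e^{-2\epsilon^2 N}$. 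The lower-tail bound follows by the symmetric argument applied to $1-p(x)_n$ (equivalently, by replacing $\epsilon$ with $-\epsilon$ in Hoeffding).

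I do not expect any real obstacle, since Lemma \ref{uniform} does the substantive work. The only subtlety worth flagging is that the ceiling in the hypothesis $N\leq\lceil\log_2(b-a)\rceil$ is exactly what forces (and tightly accommodates) the padding step; had the hypothesis read $N\leq\lfloor\log_2(b-a)\rfloor$, one could fit an interval of length $2^N$ inside $(a,b)$ directly and dispense with the factor of $2$.
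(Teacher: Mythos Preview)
Your proposal is correct and follows essentially the same route as the paper: pad $(a,b)$ to $[a,a+2^M)$ with $M=\lceil\log_2(b-a)\rceil$, invoke Lemma~\ref{uniform} to identify the first $N$ parities with fair coin flips, apply Hoeffding, and absorb the padding into the factor $2$ via $b-a>2^{M-1}$. The only cosmetic difference is that you partition $[a,a+2^M)$ into $2^{M-N}$ blocks of length $2^N$ and apply Lemma~\ref{uniform} to each, whereas the paper applies Lemma~\ref{uniform} once to the full interval of length $2^M$ and then marginalizes to the first $N$ coordinates; these are two ways of saying the same thing.
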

	\begin{proof}
		This is a consequence of Hoeffding's inequality (see \cite{Hoeffding}) which states that if $\nu_n$ is the uniform measure on $\{0,1\}^{\intervalN{0}{n}}$, or equivalently the $n$-fold product measure of the uniform distribution on $\{0,1\}$ for some $n\in\N$, then $$\nu_{n}(\{x\in\{0,1\}^{\intervalN{0}{n}}\mid\sum_{k=0}^{n-1}x_i\geq(\frac{1}{2}+\epsilon)n\})\leq e^{-2\epsilon^2n}$$
		and $$\nu_{n}(\{x\in\{0,1\}^{\intervalN{0}{n}}\mid\sum_{k=0}^{n-1}x_i\leq(\frac{1}{2}-\epsilon)n\})\leq e^{-2\epsilon^2n}.$$
		Set $M=\lceil\log_2(b-a)\rceil$. By Proposition \ref{uniform}  the push-forward measure of $\mu_{[a,a+2^M)}$ under the map $x\mapsto (p(x)_n)_{0\leq n<M}$ is the uniform measure on $\{0,1\}^{\intervalN{0}{M}}$ and thus the push-forward measure of $\mu_{[a,a+2^M)}$ under the map $x\mapsto (p(x)_n)_{0\leq n<N}$ is the uniform measure on $\{0,1\}^{\intervalN{0}{N}}$. Here and in the following we will not detail why the maps and sets under consideration are measurable. Hence 	$\mu_{(a,a+2^M)}(\{x\in(a,a+2^M)\mid \sum_{k=0}^{N-1}p(x)_k\geq (\frac{1}{2}+\epsilon)\cdot N\})\leq e^{-2\epsilon^2N}$ and $\mu_{(a,a+2^M)}(\{x\in(a,a+2^M)\mid \sum_{k=0}^{N-1}p(x)_k\leq (\frac{1}{2}-\epsilon)\cdot N\})\leq e^{-2\epsilon^2N}$. Or equivalently $\lambda(\{x\in(a,a+2^M)\mid \sum_{k=0}^{N-1}p(x)_k\geq (\frac{1}{2}+\epsilon)\cdot N\})\leq 2^Me^{-2\epsilon^2N}$ and $\lambda(\{x\in(a,a+2^M)\mid \sum_{k=0}^{N-1}p(x)_k\leq (\frac{1}{2}-\epsilon)\cdot N\})\leq 2^Me^{-2\epsilon^2N}$, thus also $\lambda(\{x\in(a,b)\mid \sum_{k=0}^{N-1}p(x)_k\geq(\frac{1}{2}+\epsilon)\cdot N\})\leq 2^Me^{-2\epsilon^2N}$ and $\lambda(\{x\in(a,a+2^M)\mid \sum_{k=0}^{N-1}p(x)_k\leq (\frac{1}{2}-\epsilon)\cdot N\})\leq 2^Me^{-2\epsilon^2N}$, since $(a,b)\subseteq(a,a+2^M)$. Thus $\mu_{(a,b)}(\{x\in(a,b)\mid \sum_{k=0}^{N-1}p(x)_k\geq(\frac{1}{2}+\epsilon)\cdot N\})\leq\frac{2^M}{b-a}e^{-2\epsilon^2N}\leq2e^{-2\epsilon^2N} $ and $\mu_{(a,b)}(\{x\in(a,b)\mid \sum_{k=0}^{N-1}p(x)_k\leq (\frac{1}{2}-\epsilon)\cdot N\})\leq\frac{2^M}{b-a}e^{-2\epsilon^2N}\leq 2e^{-2\epsilon^2N} $ since $b-a>2^{M-1}$.
	\end{proof}
	To show Theorem \ref{K} we are actually showing the stronger statement that the set of all $x$ such that the iterations of $3^nx$ under $\Col_R$ eventually reach a value less then some $K>0$ for all $n\in\N$ is Lebesgue-co-null. Before we proof this we introduce some helpful notation: 
	\begin{defn}
		Let $t(x)$ be the unique integer such that $3^{t(x)}x\in (\frac{3}{4},\frac{9}{4}]$ for $x\in(0,\infty)$.
		Let $\pi:(0,\infty)\rightarrow (\frac{3}{4},\frac{9}{4}],\ab x\mapsto 3^{t(x)}x$ the \textit{projection} of $(0,\infty)$ onto $(\frac{3}{4},\frac{9}{4}]$.
\end{defn}
	We gather some properties of $\pi$ in the following lemma:
	\begin{lem}\label{m}
		\begin{enumerate}
			\item For every $z\in\Z$ the restriction of $\pi$ to $3^z(\frac{3}{4},\frac{9}{4}]$ is a homeomorphism from $3^z(\frac{3}{4},\frac{9}{4}]$ to $(\frac{3}{4},\frac{9}{4}]$.
			\item $\pi\circ \Col_R=\Col_R\circ \pi$.
			\item We have $T^n(x)=\frac{3^m}{2^n}x$ for every $x\in (\frac{3}{4},\frac{9}{4}]$ for some $m$ such that $n\log_3(2)-1< m < 1+ n\log_3(2)$.
		\end{enumerate}
	\end{lem}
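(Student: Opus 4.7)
My plan is as follows. Part $1$ is essentially tautological: for $x \in 3^z(\frac{3}{4},\frac{9}{4}]$, the unique integer $t(x)$ with $3^{t(x)}x \in I_0$ is simply $-z$, so the restriction of $\pi$ to $3^z(\frac{3}{4},\frac{9}{4}]$ coincides with the scaling $x \mapsto 3^{-z}x$, which is manifestly a homeomorphism onto $(\frac{3}{4},\frac{9}{4}]$.

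For part $2$ the key input is the elementary observation (used already inside the proof of Lemma \ref{liminf}) that $\Col_R(3y)/\Col_R(y) \in \{1,3,9\}$ for every $y>0$; this follows by a short case analysis on the parities of $[y]$ and $[3y]$, since $\Col_R(y) \in \{\frac{y}{2}, \frac{3y}{2}\}$ and $\Col_R(3y) \in \{\frac{3y}{2}, \frac{9y}{2}\}$. Iterating yields that $\Col_R(3^n y)$ and $\Col_R(y)$ differ by an integer power of $3$ for every $n \in \N$, and hence for every $n \in \Z$ by applying the positive case to $3^n y$. Specializing to $n = t(x)$, I conclude that $\Col_R(\pi(x))$ equals $\Col_R(x)$ times a (possibly negative) power of $3$. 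On the other hand, since $T$ preserves $I_0$ and $\pi(x) \in I_0$, the value $\Col_R(\pi(x)) = T(\pi(x))$ itself lies in $I_0$; but $\pi(\Col_R(x))$ is by definition the unique element of $3^{\Z}\Col_R(x)$ inside $I_0$, so the two coincide.

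For part $3$, since $T = \Col_R|_{I_0}$ takes $I_0$ into itself, we have $T^n(x) = \Col_R^n(x)$ for every $x \in I_0$ and $n \in \N$, so Lemma \ref{parity} gives $T^n(x) = \frac{3^m}{2^n} x$ with $m = \sum_{i=0}^{n-1} p(x)_i \in \N$. For the bounds on $m$, I would use that both $x$ and $T^n(x)$ lie in $(\frac{3}{4}, \frac{9}{4}]$, so the ratio $T^n(x)/x = 3^m/2^n$ is strictly between $\frac{1}{3}$ and $3$ — strict, because attaining either endpoint would force $x = \frac{3}{4}$ or $T^n(x) = \frac{3}{4}$, neither of which belongs to $I_0$. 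Taking $\log_3$ then yields the desired $n\log_3 2 - 1 < m < n\log_3 2 + 1$. There is no substantial obstacle here; the only point worth extra care is the $\{1,3,9\}$ claim in part $2$, and even that is just a finite parity case check.
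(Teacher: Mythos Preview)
Your proposal is correct and follows essentially the same approach as the paper. The paper's proof dismisses parts $1$ and $2$ as ``immediate from the definition'' and handles part $3$ exactly as you do: from $x,\,T^n(x)\in(\tfrac{3}{4},\tfrac{9}{4}]$ it derives $\tfrac{1}{3}<\tfrac{3^m}{2^n}<3$ and takes $\log_3$. Your added detail for part $2$ (via the $\{1,3,9\}$ ratio and uniqueness of the $I_0$-representative in each $3^{\Z}$-orbit) is a clean way to unpack what the paper leaves implicit, and your endpoint remark in part $3$ is harmless overcaution---the strict inequalities already follow from $x>\tfrac{3}{4}$ and $T^n(x)>\tfrac{3}{4}$ without a separate case check.
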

	\begin{proof}
	$1.$ and $2.$ are immediate from the definition.	To see $3.$ note that $T^n(x)=\frac{3^m}{2^n}x$ for some $m\leq n$ by definition of $T$ and $T^n(x)\in (\frac{3}{4},\frac{9}{4}]$. 
	Now $x,\frac{3^m}{2^n}x\in(\frac{3}{4},\frac{9}{4}]$ imply that $\frac{3}{4}<\frac{3^m}{2^n}x\leq \frac{9}{4}$ and $\frac{3}{4}<x\leq \frac{9}{4}$ and we obtain $\frac{3^m}{2^n}\frac{3}{4}<\frac{9}{4}$ and $\frac{3}{4}<\frac{3^m}{2^n}\frac{9}{4}$, thus $\frac{1}{3}<\frac{3^m}{2^n}<3$, or equivalently $n\log_3(2)-1<m<1+n\log_3(2)$.
	\end{proof}

	\begin{thm}\label{Bound}
		There exists $K>0$ such that   $\mu_{(\frac{3}{4},\frac{9}{4}]}(\{x\in(\frac{3}{4},\frac{9}{4}]\mid \forall N\in\N:\ab \liminf_{n\rightarrow\infty} \Col_R^n(3^Nx)\leq K\})=1$. 
	\end{thm}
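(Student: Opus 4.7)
The plan is to show that for $y$ drawn uniformly from $I_N := 3^N I_0$, the orbit $(\Col_R^n(y))_n$ eventually enters $I_0$ with $\mu_{I_N}$-probability tending to $1$ as $N \to \infty$; by Proposition \ref{T}, the orbit then stays in $I_0$ and is dense there, so $\liminf_n \Col_R^n(y) = 3/4 < 9/4$. Hence $K = 9/4$ will work. Two reductions make this suffice. First, by Lemma \ref{liminf}(3), $\liminf_n \Col_R^n(3^N x)$ is nondecreasing in $N$ for each fixed $x \in I_0$, so the exceptional set $E_N := \{x \in I_0 : \liminf_n \Col_R^n(3^N x) > 9/4\}$ is nondecreasing in $N$, and $\mu_{I_0}\bigl(\bigcup_N E_N\bigr) = \lim_N \mu_{I_0}(E_N)$. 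Second, $x \mapsto 3^N x$ is a measure-preserving bijection $(I_0, \mu_{I_0}) \to (I_N, \mu_{I_N})$, so it suffices to show $\mu_{I_N}(\{y \in I_N : \text{the orbit of } y \text{ never enters } I_0\}) \to 0$ as $N \to \infty$.

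The main step is an iterated application of Lemma \ref{Hoeffding}. In Round $1$, apply Lemma \ref{Hoeffding} to $(a,b) = I_N$ with $n = n_1 := \lceil N \log_2 3 \rceil$ and a small $\delta > 0$: with $\mu_{I_N}$-probability $\geq 1 - 2e^{-2\delta^2 n_1}$, the upper Hoeffding bound $\sum_{k<n_1} p(y)_k \leq (\frac{1}{2} + \delta) n_1$ holds, which combined with Lemma \ref{parity} yields $\Col_R^{n_1}(y) \leq 3^{\alpha N + O(1)}$ for $\alpha := 1 - (\log_3 2 - \frac{1}{2} - \delta)\log_2 3$, which is strictly less than $1$ provided $0 < \delta < \log_3 2 - \frac{1}{2}$. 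I would iterate this in Rounds $k = 1, 2, \ldots, K$ with $n_k$ chosen geometrically decreasing so that $\log_3 \Col_R^{M_k}(y)$ (where $M_k := \sum_{i \leq k} n_i$) contracts by the factor $\alpha$ per round; after $K = O(\log N)$ rounds we achieve $\Col_R^{M_K}(y) \leq 9/4$. To apply Lemma \ref{Hoeffding} to the parity bits used in Round $k > 1$, one invokes Lemma \ref{parity2}: the bit $p(y)_j$ for $j < M_k$ depends only on $y \bmod 2^{M_k}$, so Lemma \ref{Hoeffding} can be applied on any ambient interval of length $\geq 2^{M_k}$ containing $I_N$ to bound $\sum_{M_{k-1} \leq j < M_k} p(y)_j$; restricting back to $I_N$ inflates the failure probability by the factor $2^{M_k}/|I_N|$.

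The main obstacle is that this restriction cost grows exponentially in $M_k - n_1$, whereas the Hoeffding gain is only $e^{-2\delta^2 n_k}$ with $n_k$ decreasing geometrically. Calibrating $\delta$ and the round-lengths $n_k$ so that the accumulated failure probability over all $O(\log N)$ rounds still tends to $0$ as $N \to \infty$ is the technical crux of the argument. Once this is done, we obtain $\mu_{I_N}(\{y \in I_N : \text{orbit of $y$ never enters } I_0\}) \to 0$ as $N \to \infty$, and the monotonicity reduction from the first paragraph completes the proof.
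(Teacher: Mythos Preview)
Your reductions in the first paragraph are correct, but the target $K=9/4$ is precisely Conjecture~\ref{verylikely}, which the paper does not claim to prove; the theorem only asserts the existence of \emph{some} $K$. More seriously, the obstacle you flag as ``the technical crux'' is fatal rather than merely technical. For any round $k\ge2$ the restriction cost from the ambient interval back to $I_N$ is at least $2^{n_k}$ (since $M_k-n_1\ge n_k$), so you need $2^{n_k}e^{-2\delta^2 n_k}\to0$, which forces $\delta>\sqrt{(\ln 2)/2}\approx0.589$. But contraction in each round (your $\alpha<1$) requires $\delta<\log_32-\tfrac12\approx0.131$. These constraints are incompatible, and no calibration of the $n_k$ helps: once the first $n_1\approx N\log_23$ parity bits are spent, $I_N$ carries no further usable randomness, and whatever you borrow from a larger ambient interval costs exponentially more than Hoeffding can repay. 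The same obstruction blocks even a large fixed $K$, since after one round the orbit is only at scale $3^{\alpha N}$, still unbounded in $N$.

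The paper's route is entirely different. One fixes a small $\epsilon>0$ and a large $b$, and by summing the Hoeffding bounds over all $N$ (a Borel--Cantelli argument) produces a set $B\subset(0,b)$ of merely \emph{positive} measure such that for every $x\in B$ and every $N$ the rescaled point $\tfrac{2^N}{3^{\lfloor(1/2+\epsilon)N\rfloor}}x$ avoids the Hoeffding-bad set $A_N$. A recursive sublemma (inducting on $N$, reducing $N$ to $N-M_N$ at each step) then gives $\Col_R^N\bigl(\tfrac{2^N}{3^{\lfloor(1/2+\epsilon)N\rfloor}}x\bigr)\preccurlyeq_3 3^{O(1)}x\le K$ for a fixed large $K$; the recursion sidesteps your inflation problem because all the needed Hoeffding events are pre-paid in the definition of $B$, so one never has to re-randomise on a shrunken interval. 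Finally one projects $B$ to a positive-measure $C\subset I_0$ and invokes the unique ergodicity of $T$ (Proposition~\ref{T}): almost every $x\in I_0$ has $T^n(x)\in C$ for infinitely many $n$, and each such visit, combined with the sublemma and Lemma~\ref{liminf}, yields $\Col_R^N(3^m x)\le K$ for arbitrarily large $N$ and every $m$. The passage from positive to full measure via ergodicity is the idea your direct approach is missing.
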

	\begin{proof}
		Note that the push-forward measure of $\mu_{(a,b)}$ under multiplication with $r>0$ is $\mu_{(ra,rb)}$. Choose $\epsilon>0$ such that $2>3^{\frac{1}{2}+\epsilon}$ and $b>2$. Define  $N_0=\lfloor \frac{\log_3 3b}{\frac{1}{2}+\epsilon}\rfloor$. For  $N\geq N_0$ consider the interval $(0,b\cdot \frac{2^N}{3^{\lfloor(\frac{1}{2}+\epsilon)\cdot N)\rfloor}}]  
		$, set $M_N=\lfloor\log_2(b\cdot \frac{2^N}{3^{\lfloor(\frac{1}{2}+\epsilon)\cdot N)\rfloor}})\rfloor$ and the set $A_N=\{x\in(0,b\cdot \frac{2^N}{3^{\lfloor(\frac{1}{2}+\epsilon)\cdot N)\rfloor}}]\mid \sum_{k=0}^{M_N-1}p(x)_k\geq(\frac{1}{2}+\epsilon)\cdot M_N\}$. Then $$\mu_{(0,b)}(\frac{3^{\lfloor\frac{1}{2}+\epsilon)\cdot N)\rfloor}}{2^N}A_n)\leq 2e^{-2\epsilon^2M_N}\leq 2e^{2\epsilon^2}e^{-2\epsilon^2\log_2(b\cdot (\frac{2^N}{3^{1+N(\frac{1}{2}+\epsilon)}}))}=2e^{2\epsilon^2}e^{-2\epsilon^2(\log_2(\frac{b}{3})+N\log_2( \frac{2}{3^{\frac{1}{2}+\epsilon}}))}.$$
		Now we can choose $b\geq 2$ large enough so that $\sum_{N=N_0}^\infty2e^{2\epsilon^2}e^{-2\epsilon^2(\log_2(\frac{b}{3})+N\log_2( \frac{2}{3^{\frac{1}{2}+\epsilon}}))}<1$. This means $A=\bigcup_{N\geq N_0}(\frac{3^{\lfloor(\frac{1}{2}+\epsilon)N\rfloor}}{2^N})A_N$ has $\mu_{(0,b)}$-measure $<1$ and thus $B=(0,b)\setminus A$ has positive $\mu_{(0,b)}$-measure.
		
		\begin{sublem}\label{sublemma}
			Suppose that $x\in B$ and $N\in\N$, then $\Col_R^N(\frac{2^N}{3^{\lfloor(\frac{1}{2}+\epsilon)\cdot N\rfloor}}\cdot x)\preccurlyeq_3 3^{1+\lfloor (\frac{1}{2}-\epsilon)\cdot N_0\rfloor}\cdot x$. In particular, $\Col_R^N(\frac{2^N}{3^{\lfloor(\frac{1}{2}+\epsilon)\cdot N\rfloor}}\cdot x)\leq (3b)^{\frac{2}{1-2\epsilon}}$.
		\end{sublem}
		\begin{proof}[Proof of sublemma]

			We proceed by induction: If $N\leq N_0$, then $\Col_R^N(\frac{2^N}{3^{\lfloor(\frac{1}{2}+\epsilon)\cdot N)\rfloor}}\cdot x)\preccurlyeq_3 \frac{3^N}{2^N}\frac{2^N}{3^{\lfloor(\frac{1}{2}+\epsilon)\cdot N)\rfloor}}\cdot x\preccurlyeq_3{3^{1+\lfloor (\frac{1}{2}-\epsilon)\cdot N_0)\rfloor}}\cdot x.$ 
			Now suppose that $N\geq \frac{\log_3 3b}{\frac{1}{2}+\epsilon}$.  This implies that $M_N\leq N$, since if towards a contradiction  $M_N>N$, i.e., $\lfloor\log_2(b\cdot \frac{2^N}{3^{\lfloor(\frac{1}{2}+\epsilon)\cdot N\rfloor}})\rfloor>N$, then also $\log_2(b\cdot \frac{2^N}{3^{\lfloor(\frac{1}{2}+\epsilon)\cdot N\rfloor}})>N$, thus $b\cdot \frac{2^N}{3^{\lfloor(\frac{1}{2}+\epsilon)\cdot N)\rfloor}}>2^N$, or $b>3^{\lfloor(\frac{1}{2}+\epsilon)\cdot N\rfloor}$, thus $\log_3 b>\lfloor(\frac{1}{2}+\epsilon)\cdot N\rfloor\geq (\frac{1}{2}+\epsilon)\cdot N-1$, thus $N<\frac{\log_3 3b}{\frac{1}{2}+\epsilon}$, a contradiction.\\ Since $x\in B$ it follows that $\frac{2^N}{3^{\lfloor(\frac{1}{2}+\epsilon)\cdot N\rfloor}}\cdot x\in (\frac{2^N}{3^{\lfloor(\frac{1}{2}+\epsilon)\cdot N\rfloor}}(0,b])\setminus A_N$, thus by Lemma \ref{parity} we conclude $$\Col_R^{M_N}(\frac{2^N}{3^{\lfloor(\frac{1}{2}+\epsilon)\cdot N\rfloor}}\cdot x)=\frac{3^{\sum_{k=0}^{M_N-1}p(\frac{2^N}{3^{\lfloor(\frac{1}{2}+\epsilon)\cdot N\rfloor}}\cdot x)_k}}{2^{M_N}}\cdot \frac{2^N}{3^{\lfloor(\frac{1}{2}+\epsilon)\cdot N\rfloor}}x\preccurlyeq_3 \frac{3^{\lfloor(\frac{1}{2}+\epsilon)\cdot M_N\rfloor}}{2^{M_N}}\cdot \frac{2^N}{3^{\lfloor(\frac{1}{2}+\epsilon)\cdot N\rfloor}}x.$$ Since ${\lfloor(\frac{1}{2}+\epsilon)\cdot (N-M_N)\rfloor}+j= {\lfloor(\frac{1}{2}+\epsilon)\cdot N\rfloor-\lfloor(\frac{1}{2}+\epsilon)\cdot M_N\rfloor}$ for some $j\in\{0,1\}$, it follows that $$\frac{3^{\lfloor(\frac{1}{2}+\epsilon)\cdot M_N\rfloor}}{2^{M_N}}\cdot\frac{2^N}{3^{\lfloor(\frac{1}{2}+\epsilon)\cdot N\rfloor}}x=\frac{2^{N-M_N}}{3^{\lfloor(\frac{1}{2}+\epsilon)\cdot N\rfloor-\lfloor(\frac{1}{2}+\epsilon)\cdot M_N\rfloor}}x=\frac{2^{N-M_N}}{3^{\lfloor(\frac{1}{2}+\epsilon)\cdot (N-M_N)\rfloor+j}}y\preccurlyeq_3\frac{2^{N-M_N}}{3^{\lfloor(\frac{1}{2}+\epsilon)\cdot (N-M_N)\rfloor}}x.$$ 
			Thus $$\Col_R^{M_N}(\frac{2^N}{3^{\lfloor(\frac{1}{2}+\epsilon)\cdot N\rfloor}}\cdot x)\preccurlyeq_3\frac{2^{N-M_N}}{3^{\lfloor(\frac{1}{2}+\epsilon)\cdot (N-M_N)\rfloor}}x.$$
			Since $M_N\leq N$, we conclude that $0\leq N-M_N$ and since also $M_N=\lfloor\log_2(b\cdot \frac{2^N}{3^{\lfloor(\frac{1}{2}+\epsilon)\cdot N\rfloor}})\rfloor\geq 1$, since $b\geq2$ and $ 2^N>3^{\lfloor(\frac{1}{2}+\epsilon)\cdot N\rfloor}$ we conclude $0\leq N-M_N<N$. 
			By induction hypothesis we obtain $$\Col_R^{N-M_N}(\frac{2^{N-M_N}}{3^{\lfloor(\frac{1}{2}+\epsilon)\cdot (N-M_N)\rfloor}}x)\preccurlyeq_3{3^{1+\lfloor (\frac{1}{2}-\epsilon)\cdot N_0\rfloor}}\cdot x.$$ We have just shown that $\Col_R^{M_N}(\frac{2^N}{3^{\lfloor(\frac{1}{2}+\epsilon)\cdot N)\rfloor}}\cdot x)\preccurlyeq_3\frac{2^{N-M_N}}{3^{\lfloor(\frac{1}{2}+\epsilon)\cdot (N-M_N)\rfloor}}x$, thus $$\Col_R^{N}(\frac{2^N}{3^{\lfloor(\frac{1}{2}+\epsilon)\cdot N)\rfloor}}\cdot x)=\Col_R^{N-M_N}(\Col_R^{M_N}(\frac{2^N}{3^{\lfloor(\frac{1}{2}+\epsilon)\cdot N))\rfloor}}\cdot x))\preccurlyeq_3\Col_R^{N-M_N}(\frac{2^{N-M_N}}{3^{\lfloor(\frac{1}{2}+\epsilon)\cdot (N-M_N)\rfloor}}x))\preccurlyeq_3{3^{1+\lfloor (\frac{1}{2}-\epsilon)\cdot N_0\rfloor}}\cdot x.$$
			The second claim follows from the fact, that $3^{1+\lfloor (\frac{1}{2}-\epsilon)\cdot N_0)\rfloor}\cdot x\leq 3^{1+\lfloor (\frac{1}{2}-\epsilon)\cdot N_0)\rfloor}\cdot b= 3^{1+\lfloor (\frac{1}{2}-\epsilon)\cdot \lfloor \frac{\log_3 3b}{\frac{1}{2}+\epsilon}\rfloor\rfloor}\cdot b\leq 3\cdot 3^{ (\frac{1}{2}-\epsilon)\cdot \frac{\log_3 3b}{\frac{1}{2}+\epsilon}}\cdot b=3\cdot (3b)^{\frac{1-2\epsilon}{1+2\epsilon}}b=(3b)^{\frac{2}{1-2\epsilon}}$.
		\end{proof}
		 To finish the proof note that $\pi(B\cap (\frac{3^{M+1}}{4},\frac{3^{M+2}}{4}])$ has positive $\mu$-measure for some $M\in\Z$. Set $C=\pi(B\cap (\frac{3^{M+1}}{4},\frac{3^{M+2}}{4}])$ for such a $M$. Since $T$ is uniquely ergodic and $C$ is $\mu$-positive, the set $D=\bigcup_{z\in\Z}T^z(C)$ has $\mu$-measure $1$. Consider the sets $F_z=\{x\in(\frac{3}{4},\frac{9}{4}]\mid T^z(x)\in C\wedge \forall w>z: T^w(x)\notin C \}$. Then the $F_z$ are pairwise disjoint and $T(F_z)=F_{z-1}$ thus $\mu(F_z)=\mu(F_w)$ for all $z,w\in\Z$, thus necessarily $\mu(F_z)=0$ for all $z\in\Z$, where $\mu$ denotes the $T$-invariant probability measure on $(\frac{3}{4},\frac{9}{4}]$.  Thus the set $E=\{x\in (\frac{3}{4},\frac{9}{4}]\mid \#\{n\in \N\mid T^n(x)\in C\}=\infty\}$
		  is of $\mu_{(\frac{3}{4},\frac{9}{4}]}$-measure $1$. Now, let $x\in E$. Then there exist arbitrarily large $N\in\N$ such that and $T^N(x)\in C$. Thus $3^M(T^N(x))\in B$. By Lemma \ref{m} we have $2^NT^N(x)=3^mx$ for some $N\log_3(2)-1< m < 1+ N\log_3(2)$ thus $3^M(T^N(x))=\frac{3^{m+M}}{2^N}x$.
		Now, to finish the proof, set $K=(3b)^{\frac{2}{1-2\epsilon}}$ and for a given $n\in\N$ choose $N$ large enough such that $n<M+\lfloor N\log_3(2)-1\rfloor-\lfloor(\frac{1}{2}+\epsilon)\cdot N\rfloor\leq M+m-\lfloor(\frac{1}{2}+\epsilon)\cdot N\rfloor$. Then by the sublemma $\Col_R^N(3^nx)\preccurlyeq_3\Col_R^N(3^{M+m-\lfloor(\frac{1}{2}+\epsilon)\cdot N\rfloor}x)=\Col_R^N(\frac{2^N}{3^{\lfloor(\frac{1}{2}+\epsilon)\cdot N)\rfloor}}\frac{3^{m+M}}{2^N}x)\leq (3b)^{\frac{2}{1-2\epsilon}}=K$, thus $\Col_R^N(3^nx)\leq K$ and since $N$ can be chosen arbitrarily large we conclude $\liminf_{N\rightarrow\infty}\Col_R^N(3^nx)\leq K$.
	\end{proof}

	\begin{rem}\label{test}
		The obvious question is whether almost-all orbits go all the way down, i.e., whether we can chose $K=\frac{3}{4}$. In the following we detail the empirical argument given in the introduction that Theorem \ref{Bound} is very likely true for $K=\frac{3}{4}$. In the proof of \ref{Bound} we constructed for each $\epsilon>0$ and $b>2$ a set $B\subseteq(0,b)$ and $N_0=\lfloor \frac{\log_3 3b}{\frac{1}{2}+\epsilon}\rfloor$. Set $\alpha=\mu_{(0,b)}(B)$.  By Sublemma \ref{sublemma} we know that  $\Col_R^N(\frac{2^N}{3^{\lceil(\frac{1}{2}+\epsilon)\cdot N\rceil}}x) \preccurlyeq_3 3^{1+\lfloor (\frac{1}{2}-\epsilon)\cdot N_0\rfloor}\cdot x$ for all $N\in\N$ and $x\in B$. If for some $M>0$ the set $A_M=\{x\in(0,b)\mid \Col_R^M( 3^{1+\lfloor (\frac{1}{2}-\epsilon)\cdot N_0\rfloor}\cdot x)\leq\frac{9}{4}$ has $\mu_{(0,b)}$-measure greater than $1-\alpha$, then $\mu_{(0,b)}(A\cap B)>0$. Furthermore, if $x\in A\cap B$ then $\Col_R^{N+M}(\frac{2^N}{3^{\lceil(\frac{1}{2}+\epsilon)\cdot N\rceil}}x) \leq \frac{9}{4}$ for all $N\in\N$ and we can use $A\cap B$ in the proof of Theorem \ref{Bound} to obtain $\frac{9}{4}$ and thus also $\frac{3}{4}$ as a bound since every orbit in $(\frac{3}{4},\frac{9}{4}]$ is dense by Proposition \ref{T}. For suitable values of $b,\epsilon$ and $M$ one may test empirically if $A_M$ has measure less or equal than $1-\alpha$ by randomly choosing long enough initial segments of $x\in (0,b)$ so that performing $\Col_R^M$ only depends on the initial segment of $x$. A test with $b=3^{200}$, $\epsilon=0.13$, $M=6000$ and initial segment of $x$ with length $6500$ was performed on a computer by Claudius Röhl for $3000$ repetitions $(x_i)_{0\leq i\leq 2999}$. The corresponding $\alpha$ is greater then $0.1$. All $3000$ trials had finite stopping time $\tau_{\frac{9}{4}}(x_i)<6000$. Thus if $A_M$ has measure less or equal than $0.9$ this outcome has a probability of less than $0.9^{3000}$ which is less than $10^{-137}$ providing strong empirical evidence that Conjecture \ref{verylikely} is true.
	\end{rem}
\begin{rem}
	If Theorem \ref{K} does not hold for $K\leq\frac{9}{4}$, then necessarily $\limsup_{n\rightarrow\infty}\Col_R^n(x)=\infty$ for a Lebesgue co-null set of $x\in(0,\infty)$, since if\ab\ab $\limsup_{n\rightarrow\infty}\Col_R^n(x)<\infty$, then\ab $\limsup_{n\rightarrow\infty}(x)\Col_R^n=\frac{9}{4}$ by a argument similar to that in Theorem \ref{comeager}. 
\end{rem}

As a corollary we prove Theorem \ref{K}:
\begin{proof}[Proof of Theorem \ref{K}]
By Theorem \ref{Bound} we find $K>0$ such that $$\mu_{(\frac{3}{4},\frac{9}{4}]}(\{x\in(\frac{3}{4},\frac{9}{4}]\mid \forall N\in\N:\ab \liminf_{n\rightarrow\infty} (\{\Col_R^n(3^Nx)\mid n\in\N\})\leq K\})=1.$$ Set $A=\{x\in(\frac{3}{4},\frac{9}{4}]\mid \forall N\in\N:\ab \liminf_{n\rightarrow\infty} \Col_R^n(3^Nx)\leq K\}$ and define $B=\bigcup_{n\in\N}3^nA$, then clearly $B$ is Lebesgue-co-null in $(\frac{3}{4},\infty)$ and if $x\in B$ then by definition of $A$ we get that $\liminf_{n\rightarrow\infty} \Col_R^n(x)\leq K$.
\end{proof}
	We outsource the following technical lemma from the following proof:
	\begin{lem}\label{partiondensity}
		Let $S\subseteq (0,\infty)$ and $a_n\in(0,\infty)$ an increasing diverging sequence such that there exits a bound $q>0$ with $\frac{a_{n}}{a_{n+1}}\geq q$ for all $n\in\N$ and suppose that $\epsilon>0$. If \ab $\liminf_{n\rightarrow\infty}\mu_{(a_{n},a_{n+1})}(S\cap (a_{n},a_{n+1}))\geq1-\epsilon$ then $\liminf_{R\rightarrow\infty}\mu_{(0,R)}(S\cap (0,R))\geq1-\frac{\epsilon}{q}$.
	\end{lem}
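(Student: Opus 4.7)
The plan is to fix a small $\delta > 0$ and use the liminf hypothesis to obtain an $N_0$ such that $\mu_{(a_n,a_{n+1})}(S \cap (a_n,a_{n+1})) \geq 1 - \epsilon - \delta$ for every $n \geq N_0$. Given $R$ sufficiently large, let $n$ be the unique index with $a_n \leq R < a_{n+1}$, and decompose $(0, R)$ into the initial segment $(0, a_{N_0})$, the complete blocks $(a_k, a_{k+1})$ for $N_0 \leq k \leq n-1$, and the final partial interval $(a_n, R)$. I would then lower bound the Lebesgue mass of $S$ on each piece and add them up.

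The naive approach of summing only the complete blocks gives $\lambda(S \cap (0,R)) \geq (1-\epsilon-\delta)(a_n - a_{N_0})$, and since only $a_n/R > q$ is available, this yields at best $\liminf \mu_{(0,R)}(S) \geq (1-\epsilon) q$, which is strictly weaker than $1 - \epsilon/q$ whenever $q < 1$. The key trick is to also retain the contribution from the trailing interval: from $\lambda(S \cap (a_n, a_{n+1})) \geq (1-\epsilon-\delta)(a_{n+1}-a_n)$ one deduces $\lambda(S \cap (a_n, R)) \geq (1-\epsilon-\delta)(a_{n+1}-a_n) - (a_{n+1}-R)$. Summing everything and simplifying gives
\[
\lambda(S \cap (0,R)) \geq (1-\epsilon-\delta)(a_{n+1}-a_{N_0}) - (a_{n+1}-R) = R - (\epsilon+\delta)\,a_{n+1} - (1-\epsilon-\delta)\,a_{N_0}.
\]

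Now the ratio bound $a_{n+1} \leq a_n/q \leq R/q$ converts this to $\mu_{(0,R)}(S) \geq 1 - (\epsilon+\delta)/q - (1-\epsilon-\delta)\,a_{N_0}/R$. Sending $R \to \infty$ kills the last term and yields $\liminf_{R \to \infty} \mu_{(0,R)}(S) \geq 1 - (\epsilon+\delta)/q$; since $\delta > 0$ was arbitrary, taking $\delta \to 0$ delivers the desired bound $1 - \epsilon/q$. The main conceptual obstacle is recognizing that discarding the partial trailing interval $(a_n, R)$ costs a full factor of $q$ and produces the wrong constant; once one chooses instead to keep it and subtract only the small defect $(a_{n+1}-R)$, the remainder is straightforward arithmetic.
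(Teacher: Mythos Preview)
Your proof is correct and follows essentially the same strategy as the paper: decompose $(0,R)$ into an initial segment, complete blocks $(a_k,a_{k+1})$, and a trailing piece, then use $a_{n+1}\leq R/q$ and let the auxiliary slack parameter tend to zero. The only cosmetic difference is that the paper works with the complement $U=(0,\infty)\setminus S$ and upper-bounds $\lambda(U\cap(0,R))\leq\lambda(U\cap(0,a_{n+1}))$, which sidesteps your ``subtract the defect $(a_{n+1}-R)$'' step; the arithmetic is otherwise identical.
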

	\begin{proof}
		
	 Look at the set $U=(0,\infty)\setminus S$ and choose $\delta,\eta$ such that $\epsilon<\delta<\eta$. We can find $n_0\in\N$ such that $\mu_{(a_{n+1},a_n)}(U\cap (a_{n+1}-a_n))\leq \delta$ for every $n\geq n_0$. Let $R>a_{n_0}$ and find $n\in\N$ such that $a_n<R\leq a_{n+1}$. Then $\lambda(U\cap(0,R))\leq \lambda(U\cap (0,a_{n_0}))+\sum_{k=n_0}^{n}\lambda(U\cap(a_n,a_{n+1}))\leq \lambda(U\cap (0,a_{n_0}))+\delta\sum_{k=n_0}^{n}(a_{k+1}-a_k)=\lambda(U\cap (0,a_{n_0}))+\delta (a_{n+1}-a_{n_0})$.
Thus 	 $\mu_{(0,R)}(U\cap(0,R))\leq \frac{\lambda(U\cap (0,a_{n_0}))}{R}+\delta \frac{(a_{n+1}-a_{n_0})}{a_n}\leq \frac{\lambda(U\cap (0,a_{n_0}))}{R}-\delta\frac{a_{n_0}}{a_n}+\frac{\delta}{q}<\frac{\eta}{q}$, when $R$ is sufficiently large. Since $\eta>\epsilon$ was arbitrary the claim follows.
	\end{proof}
 
	\begin{thm}\label{ratetomin}

		There exists $K>0$ such that the set $\{x\in(\frac{3}{4},\infty)\mid \min_{n\leq \log_2(x)(\frac{1}{1-\frac{\log_2(3)}{2}}+\epsilon)}\Col_R^n(x)\leq K\}$ has real density $1$ for every $\epsilon>0$.
		
	\end{thm}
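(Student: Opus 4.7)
I would adapt the construction underlying Theorem~\ref{Bound}: its sublemma already gives a quantitative stopping-time bound, which needs to be translated into a real-density statement via Lemma~\ref{partiondensity}, together with a follow-up step to replace the initial (large) constant $K_0$ by any fixed $K$. Given $\epsilon>0$, first choose $\epsilon_0\in(0,\log_3(2)-1/2)$ small enough that $\frac{1}{1-(\frac{1}{2}+\epsilon_0)\log_2(3)}<\frac{1}{1-\log_2(3)/2}+\epsilon/2$, and rerun the proof of Theorem~\ref{Bound} with $\epsilon_0$ in place of $\epsilon$ and parameters $b,N_0$ to be selected. The sublemma then asserts that for every $x$ in the set $B\subseteq(0,b)$ there constructed and every $N\ge N_0$, $\Col_R^N\bigl(\frac{2^N}{3^{\lfloor(\frac{1}{2}+\epsilon_0)N\rfloor}}x\bigr)\le K_0$, where $K_0=(3b)^{2/(1+2\epsilon_0)}$ is determined by $b$ and $\epsilon_0$. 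A direct computation gives $\log_2\!\bigl(\frac{2^N}{3^{\lfloor(\frac{1}{2}+\epsilon_0)N\rfloor}}x\bigr)=N(1-(\tfrac{1}{2}+\epsilon_0)\log_2(3))+\log_2 x+O(1)$, so writing $y=\frac{2^N}{3^{\lfloor(\frac{1}{2}+\epsilon_0)N\rfloor}}x$ with $x\in B\cap(b/2,b)$ yields $N\le(\frac{1}{1-\log_2(3)/2}+\epsilon)\log_2 y$ once $y$ is sufficiently large. Hence the set $G_b:=\bigcup_{N\ge N_0}\frac{2^N}{3^{\lfloor(\frac{1}{2}+\epsilon_0)N\rfloor}}(B\cap(b/2,b))$ is contained in $\{y:\min_{n\le(\frac{1}{1-\log_2(3)/2}+\epsilon)\log_2 y}\Col_R^n(y)\le K_0\}$.

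To lower-bound the real density of $G_b$, the exponential-decay calculation used in the proof of Theorem~\ref{Bound} shows that $b$ and $N_0$ can be chosen so that $\mu_{(0,b)}(B^c)\le\eta$ for any prescribed $\eta>0$. Consequently on each block $\bigl(\frac{b}{2}\cdot\frac{2^N}{3^{\lfloor(\frac{1}{2}+\epsilon_0)N\rfloor}},\,b\cdot\frac{2^N}{3^{\lfloor(\frac{1}{2}+\epsilon_0)N\rfloor}}\bigr]$ the Lebesgue proportion of $G_b$ is at least $1-2\eta$. Because the scale factor $\frac{2^N}{3^{\lfloor(\frac{1}{2}+\epsilon_0)N\rfloor}}$ has average geometric growth rate $2/3^{(1/2+\epsilon_0)}>1$, one can pass to an increasing subsequence of $N$'s along which the blocks form a genuine partition of a tail of $(0,\infty)$ with bounded ratio $q>0$; Lemma~\ref{partiondensity} then yields that $G_b$ has real density at least $1-2\eta/q$.

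Finally, to get density exactly $1$ with a single fixed $K$ (independent of $b$), I would fix $K$ to be the constant supplied by Theorem~\ref{K} and argue as follows: for $y\in G_b$ with $b$ large, the orbit reaches $(\frac{3}{4},K_0]$ after $N$ iterations, and from there (by Theorem~\ref{K}) the further stopping time $\tau_K(\Col_R^N(y))$ is almost surely finite. The set of $y$ for which this additional stopping time exceeds $\frac{\epsilon}{2}\log_2 y$ has vanishing real density (since $\tau_K$ is almost surely finite and $\log_2 y\to\infty$), so absorbing it into the slack $\epsilon\log_2 y$ still gives a set of density at least $1-c\eta$ contained in our target set with the fixed $K$. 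Taking the union over a sequence $b_n\to\infty$ (with $\eta_n\to 0$) then yields density exactly $1$. The hardest part of the argument is quantifying the distribution of $\Col_R^N(y)$ for $y\in G_b$ precisely enough to conclude that the additional stopping time has vanishing density; this likely requires interlacing the ergodic tail bound for $T$ from Proposition~\ref{T} with the block structure coming from the sublemma.
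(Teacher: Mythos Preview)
Your overall architecture matches the paper's: rerun the Sublemma construction with a small $\epsilon_0$, use the resulting quantitative stopping bound on scaled copies of a good set $B$, partition a tail of $(0,\infty)$ into blocks coming from the scaling factors $2^N/3^{\lfloor(\frac12+\epsilon_0)N\rfloor}$, and feed the block estimates into Lemma~\ref{partiondensity}. Where you diverge is in the ``additional stopping time'' step, and there you have created a difficulty the paper avoids entirely.

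You only retain the numerical conclusion $\Col_R^N(y)\le K_0$ from the Sublemma and then try to control the distribution of $\Col_R^N(y)$ inside $(\tfrac34,K_0]$ in order to apply Theorem~\ref{K} there. As you yourself flag, this is the hard part, and your proposal does not resolve it. The paper instead keeps the \emph{full} conclusion of the Sublemma, namely
\[
\Col_R^N\!\Bigl(\tfrac{2^N}{3^{\lfloor(\frac12+\epsilon_0)N\rfloor}}x\Bigr)\ \preccurlyeq_3\ 3^{1+\lfloor(\frac12-\epsilon_0)N_0\rfloor}\,x,
\]
and defines the extra bad set directly in the source variable:
\[
D=\Bigl\{x\in(0,c]\ :\ \min_{n\le M}\Col_R^n\bigl(3^{1+\lfloor(\frac12-\epsilon_0)N_0\rfloor}x\bigr)>K\Bigr\}.
\]
Since $x\mapsto 3^{c_0}x$ is just a rescaling, Theorem~\ref{K} gives $\mu_{(0,c)}(D)<\gamma$ for some finite $M$ immediately, with no need to understand the pushforward under $\Col_R^N$. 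One then takes $F=D\cup\bigcup_N\tfrac{3^{\ldots}}{2^N}A_N$, scales, and applies Lemma~\ref{partiondensity}; the extra $M$ steps are absorbed into the $\epsilon\log_2 y$ slack exactly as you anticipated. This replaces your unresolved step by a one-line measure estimate.

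Two smaller points. First, your blocks $\bigl(\tfrac{b}{2}r_N,\,b\,r_N\bigr]$ with $r_N=2^N/3^{\lfloor(\frac12+\epsilon_0)N\rfloor}$ will not tile a tail of $(0,\infty)$ along any subsequence, since $r_{N'}/r_N$ is never exactly $2$; the paper instead uses the intervals $(b_{8(N-1)},b_{8N}]$ with $b_N=c\,r_N$, which tile by construction and have bounded ratio. Second, with the paper's organization there is no need to vary $b$ or take a union over $b_n\to\infty$: one fixes $c$ (depending on the target $\gamma$), obtains density $\ge 1-\text{const}\cdot\gamma$, and then lets $\gamma\downarrow 0$. (Also, your $K_0$ should be $(3b)^{2/(1-2\epsilon_0)}$, not $(3b)^{2/(1+2\epsilon_0)}$.)
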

	\begin{proof}
		Consider - as in the proof of Theorem \ref{Bound} - the sets $A_{N,c,\delta}=\{x\in(0,c\cdot \frac{2^N}{3^{\lfloor(\frac{1}{2}+\delta)\cdot N\rfloor}}]\mid \sum_{k=0}^{M_N-1}p(x)_k>(\frac{1}{2}+\delta)\cdot M_N\}$, where $M_N=\lfloor\log_2(c\cdot \frac{2^N}{3^{\lfloor(\frac{1}{2}+\delta)\cdot N\rfloor}})\rfloor$. We also set $N_0=\lfloor \frac{\log_3 3c}{\frac{1}{2}+\delta}\rfloor$.
		
		Then $$\mu_{(0,c)}(\frac{3^{\lfloor(\frac{1}{2}+\delta)\cdot N\rfloor}}{2^N}A_{N,c,\delta})< 2e^{-2\delta^2M_N}\leq 2e^{2\delta^2}e^{-2\delta^2\log_2(c\cdot (\frac{2^N}{3^{1+N(\frac{1}{2}+\delta)}}))}=2e^{2\delta^2}e^{-2\delta^2(\log_2(\frac{c}{3})+N\log_2( \frac{2}{3^{\frac{1}{2}+\delta}}))}.$$
		
		Now for every $\gamma>0$ we can choose $c$ large enough so that $\sum_{N=0}^\infty2e^{2\delta^2}e^{-2\delta^2(\log_2(\frac{c}{3})+N\log_2( \frac{2}{3^{\frac{1}{2}+\delta}}))}<\gamma$. By Theorem \ref{K} we can find $K>0$ and $M\in\N$ such that the set $D=\{x\in(0,c]\mid\min_{n\leq M}\Col_R^n(3^{1+\lfloor (\frac{1}{2}-\delta)\cdot N_0)\rfloor}\cdot x)> K \}$ has $\mu_{(0,c)}$-measure less than $\gamma$. Thus the set $F=D\cup\bigcup_{N\in\N}\frac{3^{\lfloor(\frac{1}{2}+\epsilon)\cdot N\rfloor}}{2^N}A_N$ is of $\mu_{(0,c)}$-measure less than $2\gamma$. Thus also the sets $F_N=\frac{2^N}{3^{\lfloor(\frac{1}{2}+\delta)\cdot N\rfloor}}F$ have $\mu_{(0,\frac{2^N}{3^{\lfloor(\frac{1}{2}+\delta)\cdot N\rfloor}}c)}$-measure less than $2\gamma$. Abbreviate $b_N=\frac{2^N}{3^{\lfloor(\frac{1}{2}+\delta)\cdot N\rfloor}}c.$
		Define  $H_N=(b_{N-8},b_N)\cap F_N$. $H_N$ has $\lambda$-measure less than $2\gamma\frac{2^N}{3^{\lfloor(\frac{1}{2}+\delta)\cdot N\rfloor}}c $, thus $$\mu_{(b_{N-8},b_N)}(H_N)
		<\frac{2\gamma\frac{2^N}{3^{\lfloor(\frac{1}{2}+\delta)\cdot N\rfloor}}c  }{b_N-b_{N-8}}
		=\frac{2^9\gamma }{2^8-{3^{\lfloor(\frac{1}{2}+\delta)\cdot N\rfloor-\lfloor(\frac{1}{2}+\delta)\cdot (N-8)\rfloor}}}
		\leq \frac{2^9\gamma }{2^8-3^5}, $$ if we choose $\delta>0$ small enough such that $\lfloor(\frac{1}{2}+\delta)\cdot N\rfloor-\lfloor(\frac{1}{2}+\delta)\cdot (N-8)\rfloor\leq 5$, which will be the case if $(\frac{1}{2}+\delta)\cdot 8\leq 5 $.\\
		Now, if $y\in I_N= (b_{N-8},b_N)\setminus H_N$, then by Sublemma \ref{sublemma} we have $\Col_R^N(y)\preceq_3 3^{1+\lfloor (\frac{1}{2}-\delta)\cdot N_0\rfloor}\frac{3^{\lfloor(\frac{1}{2}+\delta)\cdot N\rfloor}}{2^N}y$. Since $\frac{3^{\lfloor(\frac{1}{2}+\delta)\cdot N\rfloor}}{2^N}y\in(0,c]\setminus D$ we conclude $\min_{n\leq M+N}\Col_R^n(y)\leq K$. Now, $\log_2(y)\geq\log_2(\frac{2^{N-8}}{3^{\lfloor(\frac{1}{2}+\delta)\cdot (N-8)\rfloor}}c)\geq \log_2(\frac{c}{3})-8(1-(\frac{1}{2}+\delta)\log_2(3))+N(1-(\frac{1}{2}+\delta)\log_2(3))$. Thus $$N+M\leq \log_2(y)(\frac{1-\frac{\log_2(\frac{c}{3})-8(1-(\frac{1}{2}+\delta)\log_2(3))}{\log_2(y)}}{1-(\frac{1}{2}+\delta)\log_2(3)}+\frac{M}{\log_2(y)}).$$
		For any given $\epsilon>0$ we can choose $\delta$ small enough such that $\frac{1}{1-(\frac{1}{2}+\delta)\log_2(3)}<\frac{1}{1-\frac{1}{2}\log_2(3)}+\epsilon$. Then for $y$ sufficiently large we also have $N+M\leq \log_2(y)(\frac{1-\frac{\log_2(\frac{c}{3})-8(1-(\frac{1}{2}+\delta)\log_2(3))}{\log_2(y)}}{1-(\frac{1}{2}+\delta)\log_2(3)}+\frac{M}{\log_2(y)})<\log_2(y)(\frac{1}{1-\frac{1}{2}\log_2(3)}+\epsilon)$. Thus we have shown that $ \min_{n\leq \log_2(x)(\frac{1}{1-\frac{\log_2(3)}{2}}+\epsilon)}\Col_R^n(x)\leq K$  for all $x\in I_{8N}$ and sufficiently large $N$. Furthermore, $\mu_{(b_{8(N-1)},b_{8N})}(F_{8N})\geq1- \frac{2^9\gamma }{2^8-3^5}$. Note that $\frac{\frac{2^{8N}}{3^{\lfloor(\frac{1}{2}+\delta)\cdot 8N\rfloor}}c}{\frac{2^{8N+8}}{3^{\lfloor(\frac{1}{2}+\delta)\cdot (8N+8)\rfloor}}c}=\frac{3^{\lfloor(\frac{1}{2}+\delta)\cdot (8N+8)\rfloor}}{2^{8}3^{\lfloor(\frac{1}{2}+\delta)\cdot 8N\rfloor}}\geq\frac{3^{(\frac{1}{2}+\delta)\cdot (8N+8)-1}}{2^{8}3^{(\frac{1}{2}+\delta)\cdot 8N}}= \frac{3^{(\frac{1}{2}+\delta)8-1}}{2^{8}}$. Applying Lemma \ref{partitiondense} with $a_N=b_{8N}=\frac{2^{8N}}{3^{\lfloor(\frac{1}{2}+\delta)\cdot 8N\rfloor}}c$ we obtain $\liminf_{R\rightarrow\infty}\mu_{(0,R)}(\{x\in(\frac{3}{4},\infty)\mid \min_{n\leq \log_2(x)(\frac{1}{1-\frac{\log_2(3)}{2}}+\epsilon)}\Col_R^n(x)\leq K\})\geq 1-\frac{2^{8}}{3^{(\frac{1}{2}+\delta)8-1}}\frac{2^9\gamma }{2^8-3^5}$. As $\gamma>0$ was arbitrary, the claim follows.
	\end{proof}
	Note that in case $K=\frac{9}{4}$ is a bound in Theorem \ref{ratetomin}, then since once an orbit enters $(\frac{3}{4},\frac{9}{4}]$ it never leaves it again, Theorem \ref{ratetomin} takes the following form:
	\begin{thm}
		Suppose that $\lambda(\{x\in(\frac{3}{4},\infty)\mid \inf_{n\in\N}\Col_R^n(x)>\frac{9}{4}\})=0$.
		Then the set $\{x\in(\frac{3}{4},\infty)\mid  \Col_R^{\lfloor\log_2(x)(\frac{1}{1-\frac{\log_2(3)}{2}}+\epsilon)\rfloor}\leq \frac{9}{4}\}$ has real density $1$ for every $\epsilon>0$.
	\end{thm}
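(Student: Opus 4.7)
The plan is to rerun the argument of Theorem \ref{ratetomin}, replacing the abstract constant $K$ supplied by Theorem \ref{K} with the concrete value $K=\tfrac{9}{4}$ that the hypothesis now makes available, and then to use the invariance of $(\tfrac{3}{4},\infty)$ under $\Col_R$ to upgrade the ``some iterate is $\leq K$'' conclusion to one about the prescribed iterate.

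The preparatory observation is that $\Col_R((\tfrac{3}{4},\infty))\subseteq (\tfrac{3}{4},\infty)$: if $[x]$ is odd then $\Col_R(x)=\tfrac{3x}{2}>\tfrac{9}{8}$, while if $[x]$ is positive and even then $[x]\geq 2$ forces $x>\tfrac{3}{2}$, so $\Col_R(x)=\tfrac{x}{2}>\tfrac{3}{4}$. Combined with the invariance $\Col_R(I_0)=I_0$ noted in the introduction, this says that once the orbit of an $x\in(\tfrac{3}{4},\infty)$ first drops to $\leq\tfrac{9}{4}$, it lies in $I_0=(\tfrac{3}{4},\tfrac{9}{4}]$ from that step onward.

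Next I would revisit the one step in the proof of Theorem \ref{ratetomin} that appeals to Theorem \ref{K}: the choice of $K$ and $M$ making $D=\{x\in(0,c]\mid \min_{n\leq M}\Col_R^n(3^{1+\lfloor(\tfrac{1}{2}-\delta)N_0\rfloor}x)>K\}$ of $\mu_{(0,c)}$-measure less than $\gamma$. Under the current hypothesis, $\inf_n\Col_R^n(y)\leq \tfrac{9}{4}$ holds for Lebesgue-a.e.\ $y\in(\tfrac{3}{4},\infty)$; for $y\in(0,\tfrac{3}{4}]$ the orbit either contracts to $0$ or enters $I_0$ within one step, so the inequality holds trivially. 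Pulling this statement back through the fixed multiplier $3^{1+\lfloor(\tfrac{1}{2}-\delta)N_0\rfloor}$, we get $\inf_n\Col_R^n(3^{1+\ldots}x)\leq\tfrac{9}{4}$ for $\mu_{(0,c)}$-a.e.\ $x\in(0,c]$, and countable additivity lets us choose $M$ so that $D$, now with $K=\tfrac{9}{4}$, has $\mu_{(0,c)}$-measure less than $\gamma$.

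The remainder of the proof of Theorem \ref{ratetomin} then applies verbatim: for sufficiently large $N$, on a subset of $(b_{8(N-1)},b_{8N})$ of large relative measure, every $y$ admits some $m\leq N+M$ with $\Col_R^m(y)\leq\tfrac{9}{4}$, and $N+M\leq \log_2(y)(\tfrac{1}{1-\log_2(3)/2}+\epsilon)$ for $y$ large enough. By the preparatory observation, $\Col_R^n(y)\leq\tfrac{9}{4}$ for every $n\geq m$, in particular at the integer $n=\lfloor\log_2(y)(\tfrac{1}{1-\log_2(3)/2}+\epsilon)\rfloor$. Applying Lemma \ref{partiondensity} to $a_N=b_{8N}$ and sending $\gamma\to 0$ then yields real density $1$. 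The only ingredient beyond the previous proof is the $\Col_R$-invariance of $(\tfrac{3}{4},\infty)$ used to propagate the good index $m$ forward to the specific index $\lfloor\log_2(y)(\tfrac{1}{1-\log_2(3)/2}+\epsilon)\rfloor$; the remaining numerical bookkeeping of Theorem \ref{ratetomin} is entirely independent of the specific value of $K$.
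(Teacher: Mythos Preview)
Your proposal is correct and follows exactly the route the paper indicates: the paper does not give a separate proof, but simply remarks that when $K=\tfrac{9}{4}$ is available in Theorem \ref{ratetomin}, the invariance of $(\tfrac{3}{4},\tfrac{9}{4}]$ under $\Col_R$ upgrades the ``some iterate $\leq K$'' conclusion to the specific iterate $\lfloor\log_2(x)(\tfrac{1}{1-\log_2(3)/2}+\epsilon)\rfloor$. Your write-up supplies the details the paper omits, in particular the verification that the hypothesis allows the choice $K=\tfrac{9}{4}$ in the step of Theorem \ref{ratetomin} that invokes Theorem \ref{K}.
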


	\section{An approximation of the orbits of $\Col_R$}
	We begin by gathering some easy to verify properties concerning the notion of \realstardensity \ab (see Definition \ref{density}).
		\begin{lem}\label{realstardense}
		Suppose that $S_i\subseteq\N$ have \realdensity{A_i,B_i} for some $A_i>0$ and $0<B_i<1$ for $i\in\{0,1\}$. 
		\begin{enumerate}
			\item The set $S_0\cap S_1$ has \realdensity{A_0+A_1,\min\{B_0,B_1\}},
			\item If $S,T$ are \realstardense\ab then $S\cap T$ is \realstardense,
			\item $S_0$ has \realdensity{A,B} for every $A\geq A_0$ and $0<B\leq B_0$,
			\item $S_0\setminus(0,K)$ is \realstardense\ab  for every $K\in(0,\infty)$,
			\item Any set containing $S_0$ has \realdensity{A_0,B_0},
			\item $f\cdot S_0\subseteq \N$ is \realstardense\ab for every $f\in(0,\infty)$.
		\end{enumerate}
	\end{lem}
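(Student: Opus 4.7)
The plan is to verify each of the six items directly from the definition, using the equivalent reformulation that $S$ has \realdensity{C,D} exactly when $\lambda((0,R)\setminus S)\leq C\cdot R^{1-D}$ for every $R>0$. I would handle the items in an order that lets later ones build on earlier ones, since several are near-trivial consequences of monotonicity of the bound in its parameters.

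For item (1), a union bound gives $\lambda((0,R)\setminus(S_0\cap S_1))\leq A_0 R^{1-B_0}+A_1 R^{1-B_1}$. Writing $B=\min\{B_0,B_1\}$, for $R\geq 1$ each summand is bounded by $A_i R^{1-B}$, yielding the target bound with constant $A_0+A_1$ and exponent $B$; for $R<1$ the right-hand side $1-(A_0+A_1)/R^B$ of the desired inequality is $\leq 0$ whenever $R^B\leq A_0+A_1$, so the statement is vacuous there, and the tiny remaining range is handled directly. Item (2) is then immediate by choosing witness pairs $(A_0,B_0)$, $(A_1,B_1)$ for $S,T$ and invoking (1). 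Item (3) is monotonicity of $(C,D)\mapsto 1-C/R^D$: enlarging $C$ from $A_0$ to $A$, or for $R\geq 1$ decreasing $D$ from $B_0$ to $B$, only weakens the lower bound, and the regime $R<1$ is again vacuous by the same nonpositivity observation. Item (5) is immediate from $S\supseteq S_0$, which gives $\lambda((0,R)\cap S)\geq\lambda((0,R)\cap S_0)$ and transfers the bound verbatim.

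For item (6), I would use the scaling identity $(0,R)\cap fS_0=f\cdot((0,R/f)\cap S_0)$ together with $\lambda(f\cdot A)=f\lambda(A)$ to compute
\[
\frac{\lambda((0,R)\cap fS_0)}{R}=\frac{\lambda((0,R/f)\cap S_0)}{R/f}\geq 1-\frac{A_0 f^{B_0}}{R^{B_0}},
\]
so $fS_0$ has \realdensity{A_0 f^{B_0},B_0} and is in particular \realstardense. The slightly more delicate item is (4). Removing the bounded piece $(0,K)$ from $S_0$ introduces at most $K$ of extra complement-measure inside $(0,R)$, yielding $\lambda((0,R)\setminus(S_0\setminus(0,K)))\leq A_0 R^{1-B_0}+K$, and the real task is to absorb the additive constant into a clean expression of the form $A'R^{1-B_0}$ uniformly in $R$. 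The choice $A'=A_0+K^{B_0}$ works: for $R\geq K$ one has $K^{1-B_0}\leq R^{1-B_0}$, hence $K\leq K^{B_0}R^{1-B_0}$, while for $R\leq K$ the trivial bound $\lambda((0,R)\setminus(S_0\setminus(0,K)))\leq R$ is dominated by $A'R^{1-B_0}$ since $R^{B_0}\leq K^{B_0}\leq A'$.

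I do not expect any genuine obstacle; the single point to watch is the small-$R$ regime in (1), (3), and (4), where the defining inequality is vacuous and must be checked separately from the main range $R\geq 1$. All six items amount to bookkeeping on top of the definition, with (4) being the only one requiring the mildly non-obvious choice of constant above.
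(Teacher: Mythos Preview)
Your argument for item (6) is exactly the paper's (which proves only that item), down to the constant $A_0 f^{B_0}$; your treatments of (4) and (5) are also correct.

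There is, however, a genuine gap in items (1) and (3) that cannot be closed, because those two statements are false as written when the constants are small. For (3), take $A_0=0.01$, $B_0=0.9$, $\epsilon=A_0^{1/B_0}$, and $S_0=(\epsilon,\infty)$. One checks that $S_0$ has real $(A_0,B_0)$-density (with equality at $R=\epsilon$), yet at $R=\epsilon$ one has $\lambda((0,R)\setminus S_0)=\epsilon>A_0\,\epsilon^{1-B}$ for every $B<B_0$, so $S_0$ does not have real $(A_0,B)$-density. A similar two-set construction with disjoint complements defeats (1). Your assertion for (3) that ``the regime $R<1$ is again vacuous by the same nonpositivity observation'' is only valid when $A\geq 1$, and for (1) the phrase ``the tiny remaining range is handled directly'' cannot be carried out with the stated constants when $A_0+A_1<1$. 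This is a defect in the lemma's formulation rather than in your method; only (2), (4), (5), (6) are used later in the paper, and (2) can be recovered by bypassing (1): the union bound gives the required inequality for $R\geq 1$, while for $R<1$ the trivial estimate $\lambda((0,R)\setminus(S\cap T))\leq R$ yields real $(\max\{A_0+A_1,1\},\min\{B_0,B_1\})$-density, which suffices for real $*$-density.
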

	\begin{proof}
		We only proof the last item. By assumption $\frac{\lambda((0,R)\setminus S_0)}{R}\leq \frac{A_0}{R^{B_0}}$, thus $\frac{\lambda((0,f\cdot R)\setminus f\cdot S_0)}{f\cdot R}\leq \frac{A_0}{R^{B_0}}$. Thus $\frac{\lambda((0,R)\setminus f\cdot S_0)}{R}\leq \frac{f^{B_0}A_0}{R^{B_0}}$, thus $f\cdot S$ has \realdensity{f\cdot A_0,B_0}. 
	\end{proof}
	\begin{lem}\label{partitiondense}
		Let $S\subseteq (0,\infty)$ and $a_n\in(0,\infty)$ an increasing diverging sequence such that there exits a bound $q>0$ with $\frac{a_{n}}{a_{n+1}}\geq q$ for all $n\in\N$. If there exists $C>0$ and $0<D<1$ such that $\frac{\lambda(S\cap (a_{n+1}-a_n))}{a_{n+1}-a_n}\geq 1-\frac{C}{a_{n+1}^D}$ then $S$ is \realstardense.
		
		If there exists $r>1$ such that $r\leq \frac{a_{n+1}}{a_n}$ for all but finitely many $n\in\N$, then the converse holds as well.
	\end{lem}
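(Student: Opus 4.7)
The plan is to compare a telescoping sum over the slabs $(a_k,a_{k+1})$ with an integral of $x^{-D}$. For the forward implication, I would fix $R > a_0$ (handling $R \le a_0$ afterward by enlarging the constant), choose the unique $n \in \N$ with $a_n \le R < a_{n+1}$, and decompose $(0,R)$ as $(0,a_0] \cup \bigcup_{k=0}^{n-1}(a_k,a_{k+1}] \cup (a_n,R]$. The hypothesis gives $\lambda((a_k,a_{k+1}) \setminus S) \le (C/a_{k+1}^D)(a_{k+1}-a_k)$ on each slab, and the same bound applies to the tail $(a_n,R] \subseteq (a_n, a_{n+1})$, so summing yields
\[
\lambda((0,R) \setminus S) \le a_0 + C \sum_{k=0}^{n} \frac{a_{k+1}-a_k}{a_{k+1}^D}.
\]

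The crux is an integral comparison: since $x \mapsto x^{-D}$ is decreasing on $(0,\infty)$, each summand is bounded by $\int_{a_k}^{a_{k+1}} x^{-D}\,dx$, so the sum telescopes to $\int_{a_0}^{a_{n+1}} x^{-D}\,dx \le a_{n+1}^{1-D}/(1-D)$. The growth bound $a_n/a_{n+1} \ge q$ now gives $a_{n+1} \le R/q$, hence $a_{n+1}^{1-D} \le R^{1-D}/q^{1-D}$; dividing through by $R$ produces the desired bound $C'/R^D$ for a constant $C'$ depending on $a_0, C, D, q$. For $R \le a_0$, the trivial estimate $\lambda((0,R) \setminus S)/R \le 1$ is absorbed by enlarging $C'$ so that $C'/R^D \ge 1$ throughout that range, yielding \realstardensity\ with exponent $D$.

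For the converse, suppose $a_{n+1}/a_n \ge r > 1$ for all but finitely many $n$ and that $S$ is \realstardense\ with constants $C, D$. For all sufficiently large $n$,
\[
\lambda((a_n, a_{n+1}) \setminus S) \le \lambda((0, a_{n+1}) \setminus S) \le C a_{n+1}^{1-D},
\]
while geometric spacing gives $a_{n+1} - a_n \ge (1 - 1/r) a_{n+1}$, and dividing yields $\lambda((a_n,a_{n+1}) \setminus S)/(a_{n+1}-a_n) \le Cr/((r-1) a_{n+1}^D)$. The finitely many exceptional indices are handled by enlarging the constant so that the right-hand side exceeds $1$ there, making the required inequality trivial.

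The only substantive step is the integral comparison in the forward direction, which uses only the monotonicity of $x^{-D}$ and no regularity of $a_n$ beyond the upper ratio bound. I expect the main minor obstacle to be ensuring the \realstardensity\ inequality holds \emph{uniformly for all} $R \in (0,\infty)$, including very small $R$ where the claim is trivial but the formalism still demands a valid constant.
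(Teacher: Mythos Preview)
Your argument is correct and matches the paper's proof essentially line for line: the same telescoping decomposition into slabs, the same integral comparison $\sum (a_{k+1}-a_k)/a_{k+1}^D \le \int_{a_0}^{a_{n+1}} x^{-D}\,dx$, the same use of $a_{n+1}\le R/q$, and the identical converse via $a_{n+1}-a_n \ge (1-1/r)a_{n+1}$. You are in fact slightly more careful than the paper in explicitly absorbing the $a_0/R$ term and the small-$R$ range into the final constant.
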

	\begin{proof}
		
		Suppose that there exist $C>0$ and $0<D<1$ such that $\frac{\lambda(S\cap (a_{n+1}-a_n))}{a_{n+1}-a_n}\geq 1-\frac{C}{a_{n+1}^D}$ for all $n\in\N$. Let $R>0$. Look at the set $U=(0,\infty)\setminus S$. We know that $\lambda(U\cap (a_{n+1},a_n)) \leq (a_{n+1}-a_n)\frac{C}{a_{n+1}^D}$. Let $n\in\N$ such that $a_n\leq R<a_{n+1}$. Then  $\lambda(U\cap (0,R)) \leq a_0+\sum_{i=0}^{n}(a_{i+1}-a_i)\frac{C}{(a_{i+1})^D}\leq a_0+ \int_{a_0}^{a_{n+1}}\frac{C}{x^{D}}dx\leq a_0+\frac{C}{1-D}a_{n+1}^{1-D}\leq a_0+\frac{C}{q^{1-D}(1-D)}R^{1-D}$, since $qa_{n+1}\leq a_n\leq R$. Thus  $\frac{\lambda(S\cap (0,R))}{R} \geq 1-\frac{a_0}{R}-\frac{C}{q^{1-D}(1-D)R^D}$. Thus, $S$ is \realstardense.
		
		Suppose now that $S$ is \realstardense\ab and there exists $r>1$ such that $\frac{a_{n+1}}{a_n}\geq r$ for all but finitely many $n\in\N$. Let $n\in\N$ be sufficiently large. There exist $C>0$ and $0<D<1$ such that $S$ has \realdensity{C,D}. Look at the set $U=(0,\infty)\setminus S$. Then $\frac{\lambda(U\cap (0,a_{n+1}))}{a_{n+1}}\leq \frac{C}{a_{n+1}^D}$. Thus $\frac{\lambda(U\cap (a_n,a_{n+1}))}{a_{n+1}-a_n}\leq \frac{a_{n+1}}{a_{n+1}-a_n}\frac{C}{a_{n+1}^D}=(1-\frac{a_{n}}{a_{n+1}})^{-1}\frac{C}{a_{n+1}^D}\leq \frac{r}{r-1}\frac{C}{a_{n+1}^D}$, thus $\frac{\lambda(S\cap (a_n,a_{n+1}))}{a_{n+1}-a_n}\geq 1- \frac{r}{r-1}\frac{C}{a_{n+1}^D}$.
	\end{proof}

	\begin{lem}\label{IterationR}
		If $S\subset\R$ is \realstardense\ab and $\zeta>0$, then the set $$\{x\in\R\mid \exists 0\leq l_0,l_1\leq 2\zeta \log_2x : \{3^{-l_0}\Col_R^{\lfloor\log_2x\rfloor}(x),3^{l_1}\Col_R^{\lfloor\log_2x\rfloor}(x)\}\subseteq S\}$$ is \realstardense.
	\end{lem}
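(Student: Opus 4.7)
First I would observe that, for every $x\ge 1$, taking $l_0=l_1=0\in[0,2\zeta\log_2 x]$ reduces the membership condition to $y(x):=\Col_R^{\lfloor\log_2 x\rfloor}(x)\in S$. Hence the simpler set
\[
T':=\{x\ge 1:y(x)\in S\}
\]
is contained in the set of interest, and by Lemma~\ref{realstardense}(5) it suffices to prove $T'$ is \realstardense{} (the range $x<1$ contributes only bounded measure and is harmless). From here the task becomes bounding $\lambda((2^n,2^{n+1})\setminus T')$ by $2^n$ times an exponentially small factor in $n$.

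Fix $n$ and stratify by the parity sum $s=\sum_{k=0}^{n-1}p(x)_k$. By Lemma~\ref{uniform}, each of the $\binom{n}{s}$ parity classes $P_{\vec p}$ with $|\vec p|_1=s$ has Lebesgue measure $1$; by Lemma~\ref{parity}, $\Col_R^n$ restricted to any such $P_{\vec p}$ is the linear map $x\mapsto(3^s/2^n)x$, mapping $P_{\vec p}$ into $(3^s,2\cdot 3^s)$. Since the preimage of any $z\in(3^s,2\cdot 3^s)$ in $(2^n,2^{n+1})$ is the single point $(2^n/3^s)z$ and therefore belongs to at most one parity class, a change of variables (disjoint union on the left, $z=(3^s/2^n)x$ on the right) gives
\[
\lambda\bigl(\{x\in(2^n,2^{n+1}):\textstyle\sum_k p(x)_k=s,\ y(x)\in S^c\}\bigr)\;\le\;\min\!\Bigl(\binom{n}{s},\;(2^n/3^s)\lambda(S^c\cap(3^s,2\cdot 3^s))\Bigr).
\]
Plugging the \realstardensity{} estimate $\lambda(S^c\cap(3^s,2\cdot 3^s))\le C(2\cdot 3^s)^{1-D}$ into the second entry, this becomes $\min\bigl(\binom{n}{s},\,C'\cdot 2^n\cdot 3^{-sD}\bigr)$ with $C':=C\cdot 2^{1-D}$.

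Summing over $s$ is the key estimate. Fix a small $\delta\in(0,1/4)$ and split: for $|s-n/2|\le\delta n$ apply the pushforward bound, contributing at most $(2\delta n+1)\,C'\cdot 2^n\cdot 3^{-(1/2-\delta)Dn}$; for $|s-n/2|>\delta n$ apply Lemma~\ref{Hoeffding}, which in the present setting gives $\sum_{|s-n/2|>\delta n}\binom{n}{s}\le 4\cdot 2^n\,e^{-2\delta^2 n}$. Both contributions are of the form $2^n\cdot 2^{-c(\delta,D)n}$ with $c(\delta,D)>0$, so $\lambda((2^n,2^{n+1})\setminus T')\le K\cdot 2^{n(1-c)}$. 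A final application of Lemma~\ref{partitiondense} with $a_n=2^n$ (hence $q=1/2$) promotes this dyadic-scale estimate to \realstardensity{} of $T'$, and therefore of the set of interest.

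The main obstacle is precisely the balancing in the last step: the pushforward bound alone, summed on $s$, only yields the trivial $O(2^n)$ because small $s$ (where $3^{-sD}$ is close to $1$) dominate; the trivial bound $\binom{n}{s}$ alone is useless near the typical value $s\approx n/2$. It is only the \emph{minimum} of the two, combined with the concentration of the binomial distribution around $n/2$ coming from Lemma~\ref{Hoeffding}, that produces the exponential gain required for \realstardensity.
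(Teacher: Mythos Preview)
Your argument is correct and in fact establishes a strictly stronger statement than the lemma: you show that already
\[
T'=\{x\ge 1:\Col_R^{\lfloor\log_2 x\rfloor}(x)\in S\}
\]
is \realstardense, so the parameters $l_0,l_1$ (and hence $\zeta$) are never actually used. This is a genuinely different route from the paper's.

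The paper's proof does rely on the $\zeta$-wiggle room. It first restricts to the ``typical'' set $P_n$ where the parity sum lies in $[(\tfrac12-\zeta)4n,(\tfrac12+\zeta)4n]$, and then invokes the \realstardensity\ of $S$ only at the single scale $b_n=3^{\lfloor 4n(\frac12-\zeta)\rfloor}$ (respectively $3^{\lfloor 4n(\frac12+\zeta)\rfloor}$ for the $l_1$ half). The discrepancy between $\Col_R^{\lfloor\log_2 x\rfloor}(x)$ and $(b_n/2^{4n})x$ --- a factor $3^m$ with $0\le m\lesssim 8n\zeta$ together with a bounded factor $2^i/3^j$ coming from $\lfloor\log_2 x\rfloor-4n\in\{0,1,2,3\}$ --- is absorbed by passing to a finite intersection $S_F=\bigcap_{f\in F}fS$ and then exporting the residual power of $3$ as the $l_0$ (resp.\ $l_1$) in the conclusion.

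Your key improvement is the $\min$ bound. Rather than discard atypical parity sums and work at one scale, you keep every $s$ and bound the bad set at level $s$ by $\min\bigl(\binom{n}{s},\,C'2^n3^{-sD}\bigr)$; the split of the sum over $s$ then lets each regime contribute only its small term. This eliminates the auxiliary objects $\zeta$, $F$, and the separate $l_0/l_1$ arguments, and would also streamline the downstream application in Theorem~\ref{maintheoremdynamics}, where the factors $3^{-l_0(1-\epsilon)}$ and $3^{l_1(1+\epsilon)}$ would simply disappear. The paper's approach buys nothing extra here; it is just more hands-on. One cosmetic remark: your invocation of ``Lemma~\ref{Hoeffding}'' for $\sum_{|s-n/2|>\delta n}\binom{n}{s}$ is really the underlying Hoeffding inequality for $\nu_n$ quoted inside that lemma's proof, not the interval version stated in the lemma itself.
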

	\begin{proof}
		We start with $l_0$. Set $a_n=2^{4n}$. Look at the set $P_n=\{x\in(a_{n},a_{n+1})\mid \leq (\frac{1}{2}-\zeta)4n\leq \sum_{i=0}^{4n-1}p(x)_i\leq (\frac{1}{2}+\zeta)4n\}$. By Lemma \ref{Hoeffding} we can find $C>0$ and $0<D<1$ such that $\frac{\lambda(P_n)}{a_{n+1}-a_n}\geq 1-\frac{C}{a_{n+1}^D}$. Look at $b_n=3^{\lfloor 4n(\frac{1}{2}-\zeta)\rfloor}$. Then $\frac{b_{n+1}}{b_{n}}\geq \frac{3^{ (4n+4)(\frac{1}{2}-\zeta)}}{3^{ 4n(\frac{1}{2}-\zeta)+1}}=3^{ 4(\frac{1}{2}-\zeta)-1} >1$, since we can assume that $4(\frac{1}{2}-\zeta)-1>0$ by taking a smaller $\zeta>0$, which does not change the result. By Lemma \ref{realstardense} $S_F=\bigcap_{i\in F}f_iS$ is \realstardense\ab for any finite set $F\subseteq(0,\infty)$. We will later specify $F$.
		By assumption and Lemma \ref{partitiondense}\ab  we can then find $C_0>0$ and $0<D_0<1$ such that $\frac{\lambda(S_F\cap(b_n,b_{n+1}))}{b_{n+1}-b_n}\geq 1-\frac{C_0}{b_{n+1}^{D_0}}$, for all $n\in\N$. But then for $S^\prime_F=\{x\in(a_n,a_{n+1})\mid \frac{3^{\lfloor 4n(\frac{1}{2}-\zeta)\rfloor}}{2^{4n}}\cdot x\in S_F\}$ we also have $\frac{\lambda(S^\prime_F\cap(a_n,a_{n+1}))}{a_{n+1}-a_n}\geq 1-\frac{C_0}{b_{n+1}^{D_0}}$. Now $b_{n}=3^{\lfloor 4n(\frac{1}{2}-\zeta)\rfloor}\geq \frac{1}{3}\cdot 3^{4n(\frac{1}{2}-\zeta)}=\frac{1}{3}\cdot 2^{4n(\frac{1}{2}-\zeta)\log_23}$, thus $\frac{\lambda(S^\prime_F\cap(a_n,a_{n+1}))}{a_{n+1}-a_n}\geq 1-\frac{C_1}{a_{n+1}^{D_1}}$ for  $C_1=3^{D_0}C_0$ and $D_1=D_0\cdot (\frac{1}{2}-\zeta)\log_23$. Thus we can further find $C_2>0$ and $0<D_2<1$ such that $\frac{\lambda(S^\prime_F\cap P_n)}{a_{n+1}-a_n}\geq 1-\frac{C_2}{a_{n+1}^{D_2}}$. If $x\in S^\prime_F\cap P_n$ then $4n\leq \lfloor\log_2x\rfloor< 4n+4$, $\frac{3^{\lfloor4n(\frac{1}{2}-\zeta)\rfloor}}{2^{4n}}x\leq \Col_R^{4n}(x)=\frac{3^{\sum_{k=0}^{4n}p(x)_k}}{2^{4n}}x\leq \frac{3^{4n(\frac{1}{2}+\zeta)}}{2^{4n}}x$, and $\frac{3^{\lfloor4n(\frac{1}{2}-\zeta)\rfloor}}{2^{4n}}x\in S_F$. We have $ \Col_R^{\lfloor\log_2x\rfloor }(x)=\frac{3^j}{2^i}\Col_R^{4n}(x)$ for some $0\leq j\leq i\leq 3$. Thus there exists $0\leq l^\prime\leq 8n\zeta+1\leq 2\zeta \log_2x+1$ such that $ 3^{-l^\prime}\frac{2^i}{3^j}\Col_R^{\log_2 x}(x)\in S_F$.  So, if $F$ contains $\{\frac{2^i}{3^j}\mid 0\leq j\leq 3, -1\leq i\leq 3\}$, then we get  $0\leq l\leq 2\zeta \log_2x$ such that $3^{-l}\Col_R^{\log_2 x}(x)\in S$. By Lemma \ref{partitiondense} the result for $l_0$ follows.
		
		With minor adjustments we go through the same argument to obtain the part of the result concerning $l_1$:
		
		Put $a_n=2^{4n}$. Look at the set $P_n=\{x\in(a_{n},a_{n+1})\mid \leq (\frac{1}{2}-\zeta)4n\leq \sum_{i=0}^{k}p(x)_i\leq (\frac{1}{2}+\zeta)4n\}$. By Lemma \ref{Hoeffding} we find $C>0$ and $0<D<1$ such that $\frac{\lambda(P_n)}{a_{n+1}-a_n}\geq 1-\frac{C}{a_{n+1}^D}$. Look at $b_n=3^{\lfloor 4n(\frac{1}{2}+\zeta)\rfloor}$. Then $\frac{b_{n+1}}{b_{n}}\geq \frac{3^{ (4n+4)(\frac{1}{2}+\zeta)}}{3^{ 4n(\frac{1}{2}+\zeta)+1}}=3^{ 4(\frac{1}{2}+\zeta)-1} >1$, since  $4(\frac{1}{2}+\zeta)-1>0$. By Lemma \ref{realstardense} $S_F=\bigcap_{i\in F}f_iS$ is \realstardense\ab for any finite set $F\subseteq(0,\infty)$. We will later specify $F$.
		By assumption and Lemma \ref{partitiondense}\ab  we can then find $C_0>0$ and $0<D_0<1$ such that $\frac{\lambda(S_F\cap(b_n,b_{n+1}))}{b_{n+1}-b_n}\geq 1-\frac{C_0}{b_{n+1}^{D_0}}$, for all $n\in\N$. But then for $S^\prime_F=\{x\in(a_n,a_{n+1})\mid \frac{3^{\lfloor 4n(\frac{1}{2}+\zeta)\rfloor}}{2^{4n}}\cdot x\in S_F\}$ we also have $\frac{\lambda(S^\prime_F\cap(a_n,a_{n+1}))}{a_{n+1}-a_n}\geq 1-\frac{C_0}{b_{n+1}^{D_0}}$. Now $b_{n}=3^{\lfloor 4n(\frac{1}{2}+\zeta)\rfloor}\geq \frac{1}{3}\cdot 3^{4n(\frac{1}{2}+\zeta)}=\frac{1}{3}\cdot 2^{4n(\frac{1}{2}+\zeta)\log_23}$, thus $\frac{\lambda(S^\prime_F\cap(a_n,a_{n+1}))}{a_{n+1}-a_n}\geq 1-\frac{C_1}{a_{n+1}^{D_1}}$ for  $C_1=3^{D_0}C_0$ and $D_1=D_0\cdot (\frac{1}{2}+\zeta)\log_23$. Thus we can further find $C_2>0$ and $0<D_2<1$ such that $\frac{\lambda(S^\prime_F\cap P_n)}{a_{n+1}-a_n}\geq 1-\frac{C_2}{a_{n+1}^{D_2}}$. If $x\in S^\prime_F\cap P_n$ then $4n\leq \lfloor\log_2x\rfloor< 4n+4$, $\frac{3^{4n(\frac{1}{2}-\zeta)}}{2^{4n}}x\leq \Col_R^{4n}(x)=\frac{3^{\sum_{k=0}^{4n}p(x)_k}}{2^{4n}}x\leq 3\frac{3^{\lfloor4n(\frac{1}{2}+\zeta)\rfloor}}{2^{4n}}x$, and $\frac{3^{\lfloor4n(\frac{1}{2}+\zeta)\rfloor}}{2^{4n}}x\in S_F$. We have $ \Col_R^{\lfloor\log_2x\rfloor}(x)=\frac{3^j}{2^i}\Col_R^{4n}(x)$ for some $0< j\leq i\leq 3$. Thus there exists $0\leq l^\prime\leq 8n\zeta+1\leq 2\zeta \log_2x+1$ such that $ 3^{l^\prime}\frac{2^i}{3^j}\Col_R^{\log_2 x}(x)\in S_F$.  So, if $F$ contains $\{\frac{2^i}{3^j}\mid 0< j\leq 3, 0\leq i\leq 4\}$, then we get  $0\leq l\leq \zeta \log_2x$ such that $3^{l}\Col_R^{\log_2 x}(x)\in S$. By Lemma \ref{partitiondense} the result follows for $l_1$.
		
		Since the intersection of two \realstardense\ab sets is \realstardense\ab again, the result follows.
	\end{proof}
Instead of proving Theorem \ref{maintheoremdynamicintroductoin} we prove a slightly stronger variant of it, which allows to extend the domain of $k$ by $\theta\log_2x$ for some $\theta>0$.
	\begin{thm}\label{maintheoremdynamics}
		Suppose that $\epsilon>0$. Then there exits $\theta>0$ so that the set $\{x\in(0,\infty)\mid\forall k\leq (\frac{1}{1-\frac{\log_23}{2}}+\theta)\log_2x:  (\frac{3^{\frac{1}{2}}}{2})^kx^{1-\epsilon}\leq \Col_R^k(x)\leq (\frac{3^{\frac{1}{2}}}{2})^kx^{1+\epsilon}\}$ is \realstardense.
	\end{thm}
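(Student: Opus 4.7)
The plan is to translate the double inequality into a large-deviation statement about the parity sums $S_k(x) = \sum_{n=0}^{k-1} p(x)_n$: by Lemma \ref{parity}, $\Col_R^k(x) = 3^{S_k(x)}/2^k \cdot x$, so the required bounds are equivalent to $|S_k(x) - k/2| \leq \epsilon \log_3 x$. Writing $q = (\log_2 3)/2$, the goal is to establish, on a \realstardense\ set of $x$, this concentration for all $k$ up to $(1/(1-q)+\theta)\log_2 x$, using Lemma \ref{Hoeffding} as a base case and Lemma \ref{IterationR} as the inductive step.

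For the base case I will show that, for small enough $\epsilon_0, \theta_0 > 0$, the set $G_0 = \{x \in (0,\infty) : |S_k(x) - k/2| \leq \epsilon_0 \log_3 x \text{ for all } k \leq (1+\theta_0) \log_2 x\}$ is \realstardense. For $x \in (2^j, 2^{j+1})$, apply Lemma \ref{Hoeffding} on the enclosing interval $(0, 2^{(1+\theta_0)j})$, whose length permits concentration for $k \leq (1+\theta_0) j$. A union bound over $k$ with threshold $\epsilon_0 \log_3 x \geq \epsilon_0 j/\log_2 3$ yields failure probability $O(j \cdot e^{-c j})$ in the large interval, for some $c = c(\epsilon_0) > 0$. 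Multiplying by the enlargement factor $2^{\theta_0 j}$ to pass to the dyadic sub-interval gives relative bad measure $O(j \cdot 2^{\theta_0 j} \cdot e^{-cj})$ in $(2^j, 2^{j+1})$, which decays geometrically provided $\theta_0 \ln 2 < c$; Lemma \ref{partitiondense} on the dyadic sequence $a_n = 2^n$ then delivers \realstardensity\ of $G_0$.

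Given a \realstardense\ set $S$ on which the good inequality holds for $k' \leq M \log_2 y$, applying Lemma \ref{IterationR} with a parameter $\zeta > 0$ produces a \realstardense\ set of $x$ for which $\tilde y_\pm := 3^{\mp l_i}\Col_R^{\lfloor \log_2 x \rfloor}(x) \in S$ for some $l_0, l_1 \leq 2\zeta \log_2 x$. Lemma \ref{liminf}(1) sandwiches $\Col_R^{k'+\lfloor \log_2 x\rfloor}(x)$ between $\Col_R^{k'}(\tilde y_-)$ and $\Col_R^{k'}(\tilde y_+)$, so the bounds on $\tilde y_\pm$ transfer back to $x$ at a slightly worsened $\epsilon$, yielding good for $k \leq (1 + qM + O(\zeta + \epsilon))\log_2 x$. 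Iterating this $n$ times, starting from $M_0 = 1+\theta_0$, produces a \realstardense\ set $G_n$ on which the good inequality holds for $k \leq M_n \log_2 x$ with $M_n$ converging to $1/(1-q)$. A final application of the base case $G_0$ to the terminal iterate $\Col_R^{\lfloor M_n \log_2 x \rfloor}(x)$ (of $\log_2$-size $\approx (1 - M_n(1-q)) \log_2 x > 0$) adjoins an additional $(1+\theta_0)(1 - M_n(1-q))\log_2 x$ steps, which together with the $\theta_0$-slack built into the base case pushes the total range beyond $(1/(1-q)+\theta)\log_2 x$ for a suitable $\theta > 0$ depending on $\epsilon$, $\theta_0$, and $n$.

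The main obstacle is the bookkeeping of the effective $\epsilon$: each iteration roughly multiplies the relative deviation constant by a factor $(1+q)$, so one must begin with $\epsilon_0 \leq \epsilon/(1+q)^n$ and scale $\theta_0$ and the parameters $\zeta_i$ correspondingly small to end with the chosen final $\epsilon$. Since $n$ is a fixed constant determined by the target $\theta$, this is feasible; \realstardensity\ is preserved at every stage by Lemma \ref{IterationR} and by intersecting with $G_0$ at the end via Lemma \ref{realstardense}(2). The subtle verification is that the final base-case extension, set against the iteration deficit $1/(1-q) - M_n$, produces net positive $\theta$ once the worst-case $\epsilon_n$-correction is absorbed; this amounts to checking the explicit inequality $M_n + (1+\theta_0)(1 - M_n(1-q)) - O(\epsilon_n) > 1/(1-q)$, which holds for $n$ large and $\epsilon_0$ small.
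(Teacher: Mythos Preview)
Your overall architecture---a Hoeffding-based \realstardense\ base case followed by an induction via Lemma \ref{IterationR}---is exactly the paper's approach, and your base case and iteration step are correct. The gap is in the final paragraph: the inequality you call ``the subtle verification'' is false, and consequently your argument only reaches $k\leq \frac{1}{1-q}\log_2 x$ (with $q=\tfrac{\log_2 3}{2}$), never beyond.

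Concretely, with $f(M)=M+(1+\theta_0)\bigl(1-M(1-q)\bigr)$ one has $f'(M)=1-(1+\theta_0)(1-q)>0$ for small $\theta_0$, and $f\bigl(\tfrac{1}{1-q}\bigr)=\tfrac{1}{1-q}$; since every $M_n$ produced by the recursion $M_{n+1}=1+qM_n$ (starting from $M_0=1+\theta_0<\tfrac{q}{1-q}+1=\tfrac{1}{1-q}$) satisfies $M_n<\tfrac{1}{1-q}$, you get $f(M_n)<\tfrac{1}{1-q}$ for every $n$, and the $-O(\epsilon_n)$ only makes it worse. The $\theta_0$-slack from the base case is damped by a factor $q^n$ through the iteration and contributes nothing in the limit.

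The paper obtains the overshoot $\theta>0$ by a different, purely deterministic device that you are missing: once the iteration yields that $S^\lambda_\delta$ is \realstardense\ for \emph{all} $\lambda>0$ and \emph{all} $\delta>0$, one fixes $\delta<\epsilon$ and extends past $k_0=\lfloor\tfrac{1-\lambda}{1-q}\log_2 x\rfloor$ using only the trivial bound $\tfrac{1}{2}\leq \Col_R(y)/y\leq\tfrac{3}{2}$. This costs a factor $3^{\pm(k-k_0)/2}$ against the target $(\tfrac{3^{1/2}}{2})^k$, and since $k-k_0\leq(\tfrac{\lambda}{1-q}+\theta)\log_2 x$, choosing $\lambda,\theta,\delta$ small enough that $(\tfrac{\lambda}{1-q}+\theta)q<\epsilon-\delta$ absorbs the loss into the allowed $x^{\pm\epsilon}$ window. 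In other words, $\theta$ is purchased with the spare precision $\epsilon-\delta$, not with any probabilistic slack. Replacing your final paragraph with this reduction fixes the proof.
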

	\begin{proof}
		First we show that it is enough to show that for every $\epsilon>0$ the set  $S^\lambda_\epsilon=\{x\in(0,\infty)\mid\forall k\leq (\frac{1-\lambda}{1-\frac{\log_23}{2}})\log_2x:  (\frac{3^{\frac{1}{2}}}{2})^kx^{1-\epsilon}\leq \Col_R^k(x)\leq (\frac{3^{\frac{1}{2}}}{2})^kx^{1+\epsilon}\}$ is \realstardense\ab for all $1\geq\lambda> 0$.
		To see this suppose that $S^\lambda_\delta$ is \realstardense. Assume that $x\in S^\lambda_\delta$ and $$(\frac{1-\lambda}{1-\frac{\log_23}{2}})\log_2x<k\leq (\frac{1}{1-\frac{\log_23}{2}}+\theta)\log_2x.$$ Set $k_0=\lfloor(\frac{1-\lambda}{1-\frac{\log_23}{2}})\log_2x\rfloor$. Since $(\frac{3^{\frac{1}{2}}}{2})^{k_0}x^{1-\delta}\leq \Col_R^{k_0}(x)\leq (\frac{3^{\frac{1}{2}}}{2})^{k_0}x^{1+\delta}$, we obtain $$\Col_R^{k_0+(k-k_0)}(x)=\Col_R^{k-k_0}(\Col_R^{k_0}(x))\geq\frac{1}{2^{k-k_0}} (\frac{3^{\frac{1}{2}}}{2})^{k_0}x^{1-\delta}=\frac{1}{3^{\frac{1}{2}(k-k_0)}} (\frac{3^{\frac{1}{2}}}{2})^{k}x^{1-\delta}$$ and $$\Col_R^{k_0+(k-k_0)}(x)=\Col_R^{k-k_0}(\Col_R^{k_0}(x))\leq(\frac{3}{2})^{k-k_0} (\frac{3^{\frac{1}{2}}}{2})^{k_0}x^{1+\delta}={3^{\frac{1}{2}(k-k_0)}} (\frac{3^{\frac{1}{2}}}{2})^{k}x^{1+\delta}.$$ Thus $\frac{1}{3^{\frac{1}{2}(k-k_0)}} (\frac{3^{\frac{1}{2}}}{2})^{k}x^{1-\delta}\leq\Col_R^{k}(x)\leq{3^{\frac{1}{2}(k-k_0)}} (\frac{3^{\frac{1}{2}}}{2})^{k}x^{1+\delta}$. If we choose $\lambda,\theta,\delta>0$ small enough we can ensure that  $\frac{1}{3^{\frac{1}{2}(k-k_0)}} x^{1-\delta}\leq x^{1-\epsilon}$ and ${3^{\frac{1}{2}(k-k_0)}} x^{1+\delta}\leq x^{1+\epsilon}$. To see that this is possible just note that $\frac{k-k_0-1}{\log_2x}\leq \frac{1}{1-\frac{\log_23}{2}}+\theta- \frac{1-\lambda}{1-\frac{\log_23}{2}}$ goes to $0$ as $\theta,\lambda$ go to $0$. Thus, if $\delta<\epsilon$, any $x\in S^{\lambda}_\delta$ fulfills $\forall k\leq (\frac{1}{1-\frac{\log_23}{2}}+\theta)\log_2x:  (\frac{3^{\frac{1}{2}}}{2})^kx^{1-\epsilon}\leq \Col_R^k(x)\leq (\frac{3^{\frac{1}{2}}}{2})^kx^{1+\epsilon}$. Thus we are done showing that it suffices to show that the $S^\lambda_\epsilon$ are \realstardense.
		
		Set $a_n=2^n$ and define for any $\delta>0$ the set $A_n=\{x\in(a_{n},a_{n+1})\mid\forall k\leq n: (\frac{3^{\frac{1}{2}}}{2})^kx^{1-\delta}\leq \Col_R^k(x)\leq (\frac{3^{\frac{1}{2}}}{2})^kx^{1+\delta}\}$. We will show that $\frac{\lambda(A_n)}{a_{n+1}-a_n}\geq 1-\frac{C}{a_{n+1}^D}$ for some $C>0$ and $0<D<1$, then conclude that $\{x\in(0,\infty)\mid\forall k\leq\log_2x: (\frac{3^{\frac{1}{2}}}{2})^kx^{1-\epsilon}\leq \Col_R^k(x)\leq (\frac{3^{\frac{1}{2}}}{2})^kx^{1+\epsilon}\}$ is \realstardense. After that we will use an iterative argument using Lemma \ref{IterationR}, to show that $S^\lambda_\epsilon$ is \realstardense\ab for smaller and smaller $\lambda>0$.
		
		First note that	$\Col_R^k(x)\leq (\frac{3}{2})^kx\leq (\frac{3}{2})^ka_{n+1}$. Now $(\frac{3}{2})^ka_{n+1}\leq(\frac{3^{\frac{1}{2}}}{2})^ka_{n+1}^{1+\delta}$ is true as long as $(3^{\frac{1}{2}})^k\leq a_{n+1}^\delta$ or $k\leq2\delta\log_3a_{n+1}$.\\
		Also $\Col_R^k(x)\geq (\frac{1}{2})^kx\geq (\frac{1}{2})^ka_{n}$, and $(\frac{1}{2})^ka_{n}\geq (\frac{3^{\frac{1}{2}}}{2})^ka_{n}^{1-\delta}$ is true as long as $(3^{\frac{1}{2}})^k\leq a_{n}^\delta$ or $k\leq2\delta\log_3a_{n}\leq 2\delta\log_3a_{n+1}$.
		
		For $2\delta\log_3a_{n}\leq k< \log_2a_{n+1}$, and $\eta>0$ look at the set $$B_k=\{x\in(a_{n},a_{n+1})\mid \sum_{i=0}^{k-1}p(x)_i\geq (\frac{1}{2}+\eta)k\}\cup \{x\in(a_{n},a_{n+1})\mid \sum_{i=0}^{k-1}p(x)_i\leq (\frac{1}{2}-\eta)k\}.$$ Then by Lemma \ref{Hoeffding} we obtain $\mu_{(a_n,a_{n+1})}(\bigcup_{2\delta\log_3a_n\leq k\leq\log_2a_{n}}B_k)\leq \frac{C_0}{a_{n+1}^{D_0}}$ for some $C_0>0$ and $0<D_0<1$. If $x\in H=(a_n,a_{n+1})\setminus\bigcup_{2\delta\log_3a_{n}\leq k\leq\log_2a_{n}}B_k $ and $ 2\delta\log_3a_{n}\leq k<\log_2a_{n+1}$, then $\Col_R^k(x)\leq (\frac{3^{\frac{1}{2}+\eta}}{2})^k x=(\frac{3^{\frac{1}{2}}}{2})^k 3^{\eta k}x\leq (\frac{3^{\frac{1}{2}}}{2})^ka_n^\delta a_{n+1}$ if $k\eta\log_23\leq n\delta $ which is true if $\eta\log_23\leq \delta $. Similarly $\Col_R^k(x)\geq (\frac{3^{\frac{1}{2}-\eta}}{2})^k x=(\frac{3^{\frac{1}{2}}}{2})^k 3^{-\eta k}x\geq (\frac{3^{\frac{1}{2}}}{2})^ka_{n}^{1-\delta}$ if $\eta\log_23\leq \delta $. If $\delta<\epsilon$ then for large enough $n$ we obtain $a_{n}^{\delta}a_{n+1}\leq a_n^{1+\epsilon}\leq x^{1+\epsilon}$ and $a_{n}^{1-\delta}\geq a_{n+1}^{1-\epsilon}\geq x^{1-\epsilon}$. Thus we find that $\mu_{(a_n,a_{n+1})}(\{x\in(a_n,a_{n+1})\mid\forall k\leq\log_2x: (\frac{3^{\frac{1}{2}}}{2})^kx^{1-\epsilon}\leq \Col_R^k(x)\leq (\frac{3^{\frac{1}{2}}}{2})^kx^{1+\epsilon}\})\geq 1-\frac{C}{a_{n+1}^D}$ for some $C>0$ and $0<D<1$ using that $\log_2x\leq n$ for $x\in (a_n,a_{n+1})$. By lemma \ref{partitiondense} this implies that $S_\epsilon=\{x\in(0,\infty)\mid\forall k\leq\log_2x: (\frac{3^{\frac{1}{2}}}{2})^kx^{1-\epsilon}\leq \Col_R^k(x)\leq (\frac{3^{\frac{1}{2}}}{2})^kx^{1+\epsilon}\}$ is \realstardense. 
		Suppose that we already know that $S^\lambda_\epsilon=\{x\in(0,\infty)\mid\forall k\leq \frac{1-\lambda}{1-\frac{\log_23}{2}}\log_2x:  (\frac{3^{\frac{1}{2}}}{2})^kx^{1-\epsilon}\leq \Col_R^k(x)\leq (\frac{3^{\frac{1}{2}}}{2})^kx^{1+\epsilon}\}$ is \realstardense\ab for some $0<\lambda\leq1$ and all $\epsilon>0$ (this is trivially the case  for $\lambda=1$). By Lemma \ref{IterationR} we know that for any $\zeta>0$ the set $S_\epsilon^\prime=\{x\in(0,\infty)\mid \exists 0\leq l_0,l_1\leq 2\zeta \log_2x: \{3^{-l_0}\Col_R^{\lfloor\log_2x\rfloor}(x),3^{l_1}\Col_R^{\lfloor\log_2x\rfloor}(x)\}\subseteq S\}$ is \realstardense.
		Thus for $x\in S_\delta\cap S_\epsilon^\prime$ we find $0\leq l_0,l_1\leq 2\zeta \log_2x$ such that $$(\frac{3^{\frac{1}{2}}}{2})^k(3^{-l_0}\Col_R^{\lfloor\log_2x\rfloor}(x))^{1-\epsilon} \leq \Col_R^k(3^{-l_0}\Col_R^{\lfloor\log_2x\rfloor}(x))\leq (\frac{3^{\frac{1}{2}}}{2})^k(3^{-l_0}\Col_R^{\lfloor\log_2x\rfloor}(x))^{1+\epsilon} $$
		for all	$k\leq \frac{1-\lambda}{1-\frac{\log_23}{2}}\log_2(3^{-l_0}\Col_R^{\lfloor\log_2x\rfloor}(x))$
		and
		$$(\frac{3^{\frac{1}{2}}}{2})^k(3^{l_1}\Col_R^{\lfloor\log_2x\rfloor}(x))^{1-\epsilon} \leq \Col_R^k(3^{l_1}\Col_R^{\lfloor\log_2x\rfloor}(x))\leq (\frac{3^{\frac{1}{2}}}{2})^k(3^{l_1}\Col_R^{\lfloor\log_2x\rfloor}(x))^{1+\epsilon} $$
		for all $k\leq \frac{1-\lambda}{1-\frac{\log_23}{2}}\log_2(3^{l_1}\Col_R^{\lfloor\log_2x\rfloor}(x))$.\\
		Furthermore, $(\frac{3^{\frac{1}{2}}}{2})^kx^{1-\delta}\leq \Col_R^k(x)\leq (\frac{3^{\frac{1}{2}}}{2})^kx^{1+\delta}$ for all $k\leq\log_2x$. In particular,  $(\frac{3^{\frac{1}{2}}}{2})^{\lfloor\log_2x\rfloor}x^{1-\delta}\leq \Col_R^{\lfloor\log_2 x\rfloor}(x)\leq (\frac{3^{\frac{1}{2}}}{2})^{\lfloor\log_2x\rfloor}x^{1+\delta}$.
		Thus if $0\leq k\leq \frac{1-\lambda}{1-\frac{\log_23}{2}}\log_2(3^{-l_0}\Col_R^{\lfloor\log_2x\rfloor}(x))$:
		
		\begin{align*}
			(\frac{3^{\frac{1}{2}}}{2})^{k+\lfloor\log_2x\rfloor}3^{-l_0(1-\epsilon)}( (\frac{3^{\frac{1}{2}}}{2})^{\lfloor\log_2x\rfloor})^{-\epsilon} x^{(1-\delta)(1-\epsilon)}  
			&=
			(\frac{3^{\frac{1}{2}}}{2})^k(3^{-l_0}( \frac{3^{\frac{1}{2}}}{2})^{\lfloor\log_2x\rfloor}x^{1-\delta}  )^{1-\epsilon}\\
			\leq (\frac{3^{\frac{1}{2}}}{2})^k(3^{-l_0}\Col_R^{\lfloor\log_2x\rfloor}(x))^{1-\epsilon}
			&\leq
			\Col_R^k(3^{-l_0}\Col_R^{\lfloor\log_2x\rfloor}(x))\\
			\leq \Col_R^k(\Col_R^{\lfloor\log_2x\rfloor}(x)).
		\end{align*}
		
		and similarly on the other side:
		
		\begin{align*}
			(\frac{3^{\frac{1}{2}}}{2})^{k+\lfloor\log_2x\rfloor}3^{l_1(1+\epsilon)}( (\frac{3^{\frac{1}{2}}}{2})^{\lfloor\log_2x\rfloor})^{\epsilon} x^{(1+\delta)(1+\epsilon)}  
			&=
			(\frac{3^{\frac{1}{2}}}{2})^k(3^{l_1}( \frac{3^{\frac{1}{2}}}{2})^{\lfloor\log_2x\rfloor}x^{1+\delta}  )^{1+\epsilon}\\
			\geq (\frac{3^{\frac{1}{2}}}{2})^k(3^{l_1}\Col_R^{\lfloor\log_2x\rfloor}(x))^{1+\epsilon}
			&\geq
			\Col_R^k(3^{l_1}\Col_R^{\lfloor\log_2x\rfloor}(x))\\
			\geq \Col_R^k(\Col_R^{\lfloor\log_2x\rfloor}(x)).
		\end{align*}
		Given any $\epsilon^\prime>0$ we can choose small enough $\delta,\zeta,\epsilon$ with $\epsilon<\epsilon^\prime$ and $\delta<\epsilon^\prime$ such that for sufficiently large $x$: $$3^{l_1(1+\epsilon)}( (\frac{3^{\frac{1}{2}}}{2})^{\lfloor\log_2x\rfloor})^{\epsilon} x^{(1+\delta)(1+\epsilon)}\leq x^{1+\epsilon^\prime}$$ and $$3^{-l_0(1-\epsilon)}( (\frac{3^{\frac{1}{2}}}{2})^{\lfloor\log_2x\rfloor})^{-\epsilon} x^{(1-\delta)(1-\epsilon)}\geq x^{1-\epsilon^\prime}.$$
		Now, for some $K_0>0$ independent from $\lambda,\delta,\zeta,$ and $\epsilon$
		\begin{align*}
			&\lfloor\log_2 x\rfloor+\frac{1-\lambda}{1-\frac{\log_23}{2}}\log_2(3^{-l_0}\Col_R^{\lfloor\log_2x\rfloor}(x))\geq\log_2 x-1+\frac{1-\lambda}{1-\frac{\log_23}{2}}\log_2(3^{-l_0}(\frac{3^{\frac{1}{2}}}{2})^{\lfloor\log_2x\rfloor}x^{1-\delta})\\
			&	\geq \log_2x+\frac{1-\lambda}{1-\frac{\log_23}{2}}\log_2((\frac{3^{\frac{1}{2}}}{2})^{\lfloor\log_2x\rfloor}x)+\frac{1-\lambda}{1-\frac{\log_23}{2}}\log_2(3^{-l_0}x^{-\delta})-1 \\
			&\geq \log_2x+\frac{1-\lambda}{1-\frac{\log_23}{2}}(\frac{1}{2}\log_2(3)\log_2(x)-\log_2(x)+\log_2(x))+\frac{1-\lambda}{1-\frac{\log_23}{2}}\log_2(3^{-l_0}x^{-\delta})-K_0 \\
			&\geq (1+\frac{1-\lambda}{1-\frac{\log_23}{2}}\frac{1}{2}\log_2(3))\log_2x+\frac{1-\lambda}{1-\frac{\log_23}{2}}\log_2(3^{-l_0}x^{-\delta})-K_0 \\
			&\geq \frac{1-\frac{\log_23}{2}\lambda}{1-\frac{\log_23}{2}}\log_2x+\frac{1-\lambda}{1-\frac{\log_23}{2}}\log_2(3^{-l_0}x^{-\delta})-K_0 \\
		\end{align*}
		Thus, if we choose some $\frac{\log_23}{2}<q<1$ we can then make $\delta,\zeta$ small enough such that $\frac{1-\frac{\log_23}{2}\lambda}{1-\frac{\log_23}{2}}\log_2x+\frac{1-\lambda}{1-\frac{\log_23}{2}}\log_2(3^{-l_0}x^{-\delta})-K_0\geq \log_2x\frac{1-q\lambda}{1-\frac{\log_23}{2}}$ (for large enough $x$). Thus we have shown that $S^{q\lambda}_{\epsilon^{\prime}}$ is \realstardense\ab for arbitrary $\epsilon^\prime>0$. Inductively it follows that $S^{q^m}_{\epsilon^{\prime}}$ is \realstardense\ab for every $m\in\N$. Thus given any $0<\lambda<1$ we can find $m\in\N$ such that $q^m<\lambda$, thus $S^{\lambda}_\epsilon\supseteq S^{q^m}_\epsilon$ is \realstardense\ab as well. Thus the proof is complete.
	\end{proof}
	As a reformulation we note:
	\begin{thm}\label{reform}
		Suppose that $\epsilon>0$. Then the set $\{x\in(0,\infty)\mid\forall \lambda\in[0,1]:  x^{\lambda-\epsilon}\leq \Col_R^{\lfloor\frac{1-\lambda}{1-\frac{\log_23}{2}}\log_2x\rfloor}(x)\leq x^{\lambda+\epsilon}\}$ is \realstardense.
	\end{thm}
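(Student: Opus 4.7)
The plan is to derive Theorem \ref{reform} directly from Theorem \ref{maintheoremdynamics} by the substitution $k = \lfloor\frac{1-\lambda}{1-\frac{\log_2 3}{2}}\log_2 x\rfloor$. The key observation is algebraic: if one writes $k_\lambda=\frac{1-\lambda}{1-\frac{\log_2 3}{2}}\log_2 x$ (ignoring the floor for a moment), then
$$\left(\frac{3^{1/2}}{2}\right)^{k_\lambda} = 2^{-k_\lambda(1-\frac{\log_2 3}{2})} = 2^{-(1-\lambda)\log_2 x}= x^{\lambda-1},$$
so that $\left(\frac{3^{1/2}}{2}\right)^{k_\lambda} x^{1\pm\epsilon'} = x^{\lambda\pm\epsilon'}$. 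Thus the two formulations encode the same bounds, with the parameter $\lambda$ simply reparameterising how far along the orbit one has travelled.

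First I would apply Theorem \ref{maintheoremdynamics} with a strictly smaller exponent $\epsilon' < \epsilon$ (to be fixed at the end) to obtain a \realstardense\ set $S_{\epsilon'}\subseteq(0,\infty)$ of $x$ such that
$$\left(\frac{3^{1/2}}{2}\right)^k x^{1-\epsilon'}\;\leq\;\Col_R^k(x)\;\leq\;\left(\frac{3^{1/2}}{2}\right)^k x^{1+\epsilon'}$$
for all $0\leq k \leq \frac{1}{1-\frac{\log_2 3}{2}}\log_2 x$ (I do not even need the extra slack $\theta$ given by that theorem). Given $\lambda\in[0,1]$ and $x\in S_{\epsilon'}$, set $k_\lambda = \lfloor \frac{1-\lambda}{1-\frac{\log_2 3}{2}}\log_2 x\rfloor$; this is clearly in the admissible range, so the inequalities above apply at $k=k_\lambda$.

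Next I would track the rounding error. Writing $k_\lambda = \frac{1-\lambda}{1-\frac{\log_2 3}{2}}\log_2 x - \delta$ with $0\leq\delta<1$, a direct computation gives
$$\left(\frac{3^{1/2}}{2}\right)^{k_\lambda} x \;=\; x^\lambda \cdot 2^{\delta(1-\frac{\log_2 3}{2})},$$
and the extra factor lies in a fixed bounded interval $[1, c]$ with $c=2^{1-\frac{\log_2 3}{2}}<2$. Substituting this into the bounds from Theorem \ref{maintheoremdynamics} yields
$$c^{-(1-\epsilon')}\, x^{\lambda-\epsilon'}\;\leq\;\Col_R^{k_\lambda}(x)\;\leq\;c^{1+\epsilon'}\, x^{\lambda+\epsilon'}.$$
Provided $x$ is large enough that $x^{\epsilon-\epsilon'}\geq c^{1+\epsilon'}$, the constant prefactors are absorbed into the exponent, giving the desired bound $x^{\lambda-\epsilon}\leq \Col_R^{k_\lambda}(x)\leq x^{\lambda+\epsilon}$.

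Finally I would choose $\epsilon'=\epsilon/2$ (say), let $K$ be the threshold above which the constant absorption works, and conclude using item 4 of Lemma \ref{realstardense}: the set $S_{\epsilon'}\setminus(0,K]$ is still \realstardense, and by construction every $x$ in it satisfies the conclusion of Theorem \ref{reform} simultaneously for all $\lambda\in[0,1]$. The only mildly subtle point is the uniformity in $\lambda$: but since the bound obtained from Theorem \ref{maintheoremdynamics} is uniform in $k$ across its admissible range, and $k_\lambda$ stays in that range as $\lambda$ sweeps over $[0,1]$, no further argument is needed. There is no genuine obstacle here beyond the bookkeeping of constants and the floor function; the theorem is essentially a cosmetic reparameterisation of Theorem \ref{maintheoremdynamics}.
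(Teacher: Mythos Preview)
Your proposal is correct and follows essentially the same route as the paper: apply Theorem \ref{maintheoremdynamics} with a smaller exponent, substitute $k=\lfloor\frac{1-\lambda}{1-\frac{\log_23}{2}}\log_2 x\rfloor$, use the algebraic identity $(\frac{3^{1/2}}{2})^{k}\approx x^{\lambda-1}$, and absorb the bounded floor-error factor into the exponent for large $x$. The only cosmetic difference is that the paper tracks the constants slightly more explicitly (obtaining the factor $\frac{2}{3^{1/2}}$ on one side and $1$ on the other), whereas your $c^{\pm(1\pm\epsilon')}$ are looser than necessary but still valid.
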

\begin{proof}
	Suppose that $\delta>0$ and set $A_\delta=\{x\in(0,\infty)\mid\forall k\leq (\frac{1}{1-\frac{\log_23}{2}})\log_2x:  (\frac{3^{\frac{1}{2}}}{2})^kx^{1-\delta}\leq \Col_R^k(x)\leq (\frac{3^{\frac{1}{2}}}{2})^kx^{1+\delta}\}$. By Theorem \ref{maintheoremdynamics} $A_\delta$ is \realstardense. Suppose that $x\in A_\delta$ and $\lambda\in[0,1]$. Then $0\leq \lfloor(\frac{1-\lambda}{1-\frac{\log_23}{2}})\log_2x\rfloor\leq (\frac{1}{1-\frac{\log_23}{2}})\log_2x$, thus $$(\frac{3^{\frac{1}{2}}}{2})^{\lfloor(\frac{1-\lambda}{1-\frac{\log_23}{2}})\log_2x\rfloor}x^{1-\delta}\leq \Col_R^{\lfloor(\frac{1-\lambda}{1-\frac{\log_23}{2}})\log_2x\rfloor}(x)\leq (\frac{3^{\frac{1}{2}}}{2})^{\lfloor(\frac{1-\lambda}{1-\frac{\log_23}{2}})\log_2x\rfloor}x^{1+\delta}.$$
	We have $(\frac{3^{\frac{1}{2}}}{2})^{\lfloor(\frac{1-\lambda}{1-\frac{\log_23}{2}})\log_2x\rfloor}x^{1+\delta}\leq\frac{2}{3^{\frac{1}{2}}} (\frac{3^{\frac{1}{2}}}{2})^{(\frac{1-\lambda}{1-\frac{\log_23}{2}})\log_2x}x^{1+\delta}=\frac{2}{3^{\frac{1}{2}}}x^{\lambda-1}x^{1+\delta}=\frac{2}{3^{\frac{1}{2}}}x^{\lambda+\delta}$,
	and on the other side $(\frac{3^{\frac{1}{2}}}{2})^{\lfloor(\frac{1-\lambda}{1-\frac{\log_23}{2}})\log_2x\rfloor}x^{1-\delta}\geq (\frac{3^{\frac{1}{2}}}{2})^{(\frac{1-\lambda}{1-\frac{\log_23}{2}})\log_2x}x^{1-\delta}=x^{\lambda-1}x^{1-\delta}=x^{\lambda-\delta}$.
	If $x$ is large enough and $\delta<\epsilon$ then we conclude that $x^{\lambda-\epsilon}\leq\frac{3^{\frac{1}{2}}}{2}x^{\lambda-\delta}\leq \Col_R^{\lfloor\frac{1-\lambda}{1-\frac{\log_23}{2}}\rfloor}(x)\leq x^{\lambda+\delta}\leq x^{\lambda+\epsilon}$, thus the claim follows since $A_\delta$ is \realstardense.
\end{proof}
As a corollary we get:
\begin{thm}
	There exists $K>0$ such that $\{x\in(\frac{3}{4},\infty)\mid (\frac{1}{1-\frac{\log_23}{2}}-\epsilon)\log_2x\leq \tau_K(x)\leq (\frac{1}{1-\frac{\log_23}{2}}+\epsilon)\log_2x\}$ has real density $1$ for all $\epsilon>0$.
\end{thm}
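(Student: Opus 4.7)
The plan is to pick the constant $K$ supplied by Theorem \ref{ratetomin} and combine its upper bound on $\tau_K$ with the lower half of the approximation supplied by Theorem \ref{maintheoremdynamics}. Fix $\epsilon>0$ and choose an auxiliary parameter $\delta>0$, to be specified small in terms of $\epsilon$. Let $U$ be the set from Theorem \ref{ratetomin} with parameter $\epsilon$, on which $\tau_K(x)\leq\bigl(\frac{1}{1-\log_23/2}+\epsilon\bigr)\log_2 x$; this has real density $1$ by that theorem. Let $V$ be the set from Theorem \ref{maintheoremdynamics} applied with $\delta$ in place of $\epsilon$ and with some $\theta>\epsilon$, so that on $V$
\[
  \Bigl(\tfrac{3^{1/2}}{2}\Bigr)^{k}x^{1-\delta}\leq \Col_R^{k}(x)\leq \Bigl(\tfrac{3^{1/2}}{2}\Bigr)^{k}x^{1+\delta}\qquad\text{for all } 0\leq k\leq \Bigl(\tfrac{1}{1-\log_23/2}+\theta\Bigr)\log_2 x.
\]
This set is \realstardense\ and hence has real density $1$, and the intersection $U\cap V$ still has real density $1$ since the union of two sets of real density $0$ has real density $0$.

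For $x\in U\cap V$, the choice $\theta>\epsilon$ guarantees that $k=\tau_K(x)$ lies in the window where the lower bound in $V$ is valid. Applying it at $k=\tau_K(x)$ together with $\Col_R^{\tau_K(x)}(x)\leq K$ gives
\[
  K\geq \Bigl(\tfrac{3^{1/2}}{2}\Bigr)^{\tau_K(x)}x^{1-\delta},
\]
and taking $\log_2$, using $\log_2(3^{1/2}/2)=\frac{\log_23}{2}-1<0$, rearranges to
\[
  \tau_K(x)\geq\frac{(1-\delta)\log_2 x-\log_2 K}{1-\frac{\log_23}{2}}.
\]
If $\delta$ is chosen small enough relative to $\epsilon$ and $x$ is large enough to absorb the additive constant $\frac{\log_2 K}{1-\log_23/2}$, this lower bound is at least $\bigl(\frac{1}{1-\log_23/2}-\epsilon\bigr)\log_2 x$.

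Both inequalities of the theorem then hold simultaneously for every $x\in U\cap V$ beyond a threshold $x_0=x_0(K,\epsilon,\delta)$. Removing the bounded set $(\frac{3}{4},x_0]$ does not affect real density, so the resulting set has real density $1$, proving the theorem. The only obstacle is bookkeeping: one must ensure that $\tau_K(x)$ actually lies within the window of validity of Theorem \ref{maintheoremdynamics}, which is handled at the outset by taking $\theta>\epsilon$ in the application of that theorem. Beyond this, the proof is a direct synthesis of Theorem \ref{ratetomin} and Theorem \ref{maintheoremdynamics} with no new estimates.
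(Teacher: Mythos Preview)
Your approach is essentially the paper's: the upper bound on $\tau_K$ comes straight from Theorem~\ref{ratetomin}, and the lower bound from the orbit approximation of Theorem~\ref{maintheoremdynamics} (the paper uses its reformulation, Theorem~\ref{reform}, but the content is the same).

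One small correction on the bookkeeping you flagged: Theorem~\ref{maintheoremdynamics} does not let you \emph{choose} $\theta$; it merely supplies some $\theta>0$ depending on your $\delta$, so ``taking $\theta>\epsilon$'' is not something you control. The trivial fix is to apply Theorem~\ref{ratetomin} with parameter $\min(\epsilon,\theta)$ instead of $\epsilon$, which keeps $\tau_K(x)$ inside the window and still yields the desired upper bound. Alternatively, avoid evaluating at $\tau_K(x)$ altogether and simply observe that the lower bound $(\tfrac{3^{1/2}}{2})^k x^{1-\delta}>K$ holds for every $k\leq(\tfrac{1}{1-\log_23/2}-\epsilon)\log_2 x$ once $\delta<\epsilon(1-\tfrac{\log_23}{2})$ and $x$ is large, which forces $\tau_K(x)$ above that threshold directly; this is what the paper does via Theorem~\ref{reform}.
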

\begin{proof}
	We take $K$ as in Theorem \ref{Bound}. Suppose that $\delta>\eta>0$.
By Theorem \ref{reform} the set $\{x\in(0,\infty)\mid\  x^{\delta-\eta}\leq \Col_R^{\lfloor\frac{1-\delta}{1-\frac{\log_23}{2}}\log_2x\rfloor}(x)\}$ is \realstardense. Since also $\{x\in(0,\infty)\mid K<x^{\delta-\eta}\}$ is \realstardense\ab and $\delta>\eta>0$ were arbitrary, we get that $\{x\in(\frac{3}{4},\infty)\mid (\frac{1}{1-\frac{\log_23}{2}}-\epsilon)\log_2x\leq \tau_K(x)\}$ is \realstardense.  By Theorem \ref{ratetomin} the set $\{x\in(\frac{3}{4},\infty)\mid \min_{n\leq \log_2(x)(\frac{1}{1-\frac{\log_2(3)}{2}}+\epsilon)}\Col_R^n(x)\leq K\}$ has real density $1$, thus also $\{x\in(\frac{3}{4},\infty)\mid \tau_K(x)\leq (\frac{1}{1-\frac{\log_23}{2}}+\epsilon)\log_2x\}$ has real density one. As \realstardense\ab sets have real density $1$ and the intersection of two sets that have real density $1$ has again real density $1$ the proof is complete.
\end{proof}
	\section{A graph theoretic reformulation}
In this section we associate an acyclic directed graph to each $x\in\R$ and discuss some basic properties.

First assign to each real number $x\in\R$ a directed graph $H_x$ in the following way:
Let $\bold{a}^x\in\{\e,\m,0\}^\Z$ be the coefficients of the balanced ternary representation of $x$, i.e., $x=\sum_{k\in\Z}\bold{a}^x_k3^k$, (in particular $\bold{a}^x_k=0$ for all $k>N_0$ for some $N_0\in\N$). For each $n\in\Z$ consider the balanced ternary representation of $(\frac{3}{2})^nx=\sum_{k\in\Z}\bold{a}^{(\frac{3}{2})^nx}_k3^k$ (in order to ensure that the notion is well-defined, we choose representations ending in a constant $\e$-sequence when there are two representations).  The set of nodes of $H_x$ is $ \Z\times \Z$. Each node $(n,v)$ is the origin of exactly one directed edge $((n,v),(n+1,w))$, where $$w=\begin{cases}
	v&\text{if}\ab \sum_{k\geq v}\bold{a}^{(\frac{3}{2})^nx}3^k \ab\text{is \ab odd}, \\
	v+1&\text{if} \ab \sum_{k\geq v}\bold{a}^{(\frac{3}{2})^nx}3^k \ab\text{is \ab even.}
\end{cases}$$ So, intuitively, we put all the edges  $(3^zy,\Col_R(3^zy))$ into one graph for every $y$ in the orbit of $x$ under multiplication with $2$ and $3$, i.e. for all $y\in\{2^i3^jx\mid i,j\in\Z\}$.

As an example look at $x=\e\n\m\cdot\n\m\m...$. Then $H_x$ restricted to edges originating in row $0$ looks as follows:\\
$\begin{matrix}
	\hspace{16pt}\n&\e&\n&\m\cdot&\n&\m&\m&...\\	
	\hspace{16pt}\hspace{-13pt}\swarrow&\downarrow&\downarrow&\hspace{-13pt}\swarrow &\hspace{-13pt}\swarrow&\downarrow&\hspace{-13pt}\swarrow&...\\
	\hspace{16pt}\n&\e&\e&\n\cdot&\m&\e&...
\end{matrix}$\\
Now we are going to restrict $H_x$ to those vertices that have non-zero label. In order for this to be useful we note in the following lemma that if vertices with non-zero label always point to vertices who also have non-zero label:
\begin{lem}
Suppose that $x\in\R$. Set $\bold{a}^x_{n,z}:=\bold{a}^{(\frac{3}{2})^nx}_z$.	If $((n,v),(n+1,w))\in H_x$ and $\bold{a}^x_{n,z}\neq\n$ or $v=w$, then $\bold{a}^x_{n+1,w}\neq \n$. 
\end{lem}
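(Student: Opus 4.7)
My approach is to reinterpret the parity condition defining the edge in terms of the balanced-ternary block decomposition of $y:=(\frac{3}{2})^nx$, and then track what multiplication by $3$ followed by division by $2$ does at the digit level. Writing $a_k:=\bold{a}^x_{n,k}$ and $b_k:=\bold{a}^x_{n+1,k}$, the number $z:=3y/2=(\frac{3}{2})^{n+1}x$ has digits $(b_k)$. First I would observe that, since $3^{k-v}\equiv 1\pmod 2$, the parity of $\sum_{k\geq v}a_k3^k$ coincides with that of the finite sum $\sum_{k\geq v}a_k$. Decomposing the top portion of $y$ into blocks $B^j_{a,b}=a(\n)^{*j}b$ with $a,b\in\{\e,\m\}$ separated by runs of $\n$, each block contributes exactly two non-zero digits, so $\sum_{k\geq v}a_k$ is odd if and only if $v$ lies in the half-open interval $(v_{\mathrm{bot}}(B),v_{\mathrm{top}}(B)]$ for some block $B$, where $v_{\mathrm{top}}(B)$ and $v_{\mathrm{bot}}(B)$ are the positions of the top (leftmost) and bottom (rightmost) letters of $B$.

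Having established this dictionary, I would split into two cases. In Case~1, where $w=v$ (so the hypothesis is automatic), the parity is odd and hence $v\in(v_{\mathrm{bot}}(B),v_{\mathrm{top}}(B)]$ for some block $B$. Multiplying $y$ by $3$ shifts every digit up by one, placing the shifted block at $[v_{\mathrm{bot}}(B)+1,v_{\mathrm{top}}(B)+1]$ in $3y$, so $v$ sits in $[v_{\mathrm{bot}}(B)+1,v_{\mathrm{top}}(B)]$ — everywhere in the shifted block except its top letter. I would then apply the four block transformations stated in the introduction, namely $\e(\n)^{*j}\m\to\n(\e)^{*(j+1)}$, $\e(\n)^{*j}\e\to\e(\m)^{*(j+1)}$, $\m(\n)^{*j}\e\to\n(\m)^{*(j+1)}$, and $\m(\n)^{*j}\m\to\m(\e)^{*(j+1)}$, and verify directly that in each case the only position of the image block that can become $\n$ is the top one, while every other position becomes $\e$ or $\m$. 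This gives $b_v\neq\n$.

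In Case~2, where $w=v+1$ and the parity is even, the hypothesis $a_v\neq\n$ forces $v$ to be the top or bottom letter of some block $B$; but $v=v_{\mathrm{top}}(B)$ would place $v$ in $(v_{\mathrm{bot}}(B),v_{\mathrm{top}}(B)]$ and make the parity odd, a contradiction. Hence $v=v_{\mathrm{bot}}(B)$, and in $3y$ the shifted block lies at $[v+1,v_{\mathrm{top}}(B)+1]$ with $v+1$ as its bottom letter. Inspecting the same four transformations, the bottom letter of the image block is always an $\e$ or an $\m$, so $b_{v+1}\neq\n$.

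The main obstacle I anticipate is justifying this block-level reasoning in the real-valued setting, since the representation of $y$ may extend infinitely to the right and the block decomposition was originally stated for integers. I would argue that the rewriting is purely local: non-zero blocks are separated by $\n$-runs which are left untouched by the division-by-$2$ algorithm, so the image of a given block depends only on that block. The parity identity is legitimate because the tail $\sum_{k<v}a_k3^{k-v}$ lies in $[-\tfrac{1}{2},\tfrac{1}{2}]$ and thus cannot flip the parity of $[y/3^v]$; and the countably many exceptional expansions ending in constant $\e^{*\infty}$-sequences are handled by the conventions already fixed in the introduction.
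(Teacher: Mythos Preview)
Your proposal is correct and follows essentially the same route as the paper's proof: both arguments reduce to the block decomposition of the balanced ternary expansion and the explicit division-by-$2$ rules from the introduction. The paper is terser---it simply names the nearest non-zero positions $k\geq v$ and $l<v$ and reads off that the target digit equals $-a_l$ (respectively $-a_v$ in the even-parity case)---whereas you spell out the multiplication-by-$3$ shift and the parity-to-block dictionary more explicitly, but the underlying mechanism is identical.
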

\begin{proof}
	Suppose that $w=v$. Thus $\sum_{k\geq v}3^k\bold{a}^x_{n,k}$ is odd by definition. Let $k\geq v$ least such that  $\bold{a}^x_{n,k}\neq\n$ and $l<v$ maximal such that $\bold{a}^x_{n,l}\neq\n$. Then the value of the position $(n+1,v)$ equals $-\bold{a}^x_{n,l}\neq\n$ according to the rules of division by $2$ applied to the block $\bold{a}^x_{n,k}\n^{k-l-1}\bold{a}^x_{n,l}$ (in the case that all $\bold{a}^x_{n,l}=\n$ for $l<v$, then by our convention $\bold{a}^x_{n+1,v}=\e$).
	If $w=v+1$ and $\bold{a}^x_{n,v}\neq \n$, then the position $(n,v)$ is at the end of a block and thus $\bold{a}^x_{n+1,v+1}=-\bold{a}^x_{n,v}$.
\end{proof}
Now for every $x\in\R$ we define $G_x$ to be  $H_x$ restricted to the set $\{(n,z)\in\Z\times\Z\mid\bold{a}^x_{n,z}\neq 0\}$. By the previous lemma each non-zero vertex is the origin of exactly one directed edge.

Let $\bold{G}$ denote the set of directed graphs on $\Z\times\Z$, let $I$ be a standard Borel space and let $c:\bold{G}\rightarrow I$ be a map such that $c(G_x)=c(G_{\Col_R(x)})$ for all $x\in\R$. If the map from $(0,\infty)$ to $I$ that sends $x$ to $c(G_x)$ is Lebesgue-measurable or Baire-measurable, then it is constant on a Lebesgue-co-null set respectively a comeager set due to the ergodicity of $T$ (see Proposition \ref{T}). An example is $c_0:\bold{G}\rightarrow \N\cup\{\infty\}$ which sends $G$ to the number of $G$'s connected components. Then Theorem \ref{comeager subset} implies that $c_0$ is constantly $=1$ on a comeager set and Theorem \ref{K} implies that $c_0=k_0$ for some $k_0\in\N$ and we conjectured that $k_0=1$.

Another example of this type is the following:

For $G\in \bold{G}$ a \textit{directed branch} is a sequence $\bold{g}\in(\Z\times\Z)^\Z$ such that $ (\bold{g}_{n}\bold{g}_{n+1})\in G$ for all $n\in\Z$. Note that $G_x$ contains at least one directed branch for each $x\neq \n$, i.e., if $(i,j_i)$ is left most non-zero labeled index pair in row $i$, then either $(i,i_j)$ is a leaf, i.e., it does not occur as the second component of an edge, or it is not. If it is, then put $k_i=j_i-1$ and put $k_i=j_i$ if it is not a leaf. Then $B_x=(i,k_i)_{i\in\Z}$ is a branch in $G_x$. Now, the map $c_1$ that sends $G$ to the cardinalitiy of the set of its branches, which can attain values in $\aleph_0+1\cup\{2^{\aleph_0}\}$, is universally measurable and Baire-measurable. Thus again it is constant on a Lebesgue-co-null set and also constant on a comeager set, with possibly different values. We have:

\begin{thm}
	$G_x$ has exactly one branch for comeagerly many $x\in(0,\infty)$.
\end{thm}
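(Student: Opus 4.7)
The plan is to exploit the $C_\mathbb{R}$-invariance of the branch-count map together with the minimality of $T$ on $I_0=(\tfrac{3}{4},\tfrac{9}{4}]$ (Proposition~\ref{T}) to reduce the problem to identifying a single invariant value, and then to pin that value down to $1$ using Theorem~\ref{comeager subset} and the rigidity of the balanced ternary expansion $3/4=1.\overline{\m\e}$.

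First I would verify that $c_1:x\mapsto|\{\text{branches of }G_x\}|$ is Baire-measurable and $C_\mathbb{R}$-invariant. The vertex $(n,v)$ of $G_x$ naturally represents the real number $2^{-n}3^{n-v}x$, and its outgoing edge encodes one step of $C_\mathbb{R}$ on this number; since the family $\{2^i3^jx:i,j\in\mathbb{Z}\}$ coincides with $\{2^i3^j C_\mathbb{R}(x):i,j\in\mathbb{Z}\}$, there is a canonical graph isomorphism $G_x\cong G_{C_\mathbb{R}(x)}$ realized by a shift of the $\mathbb{Z}\times\mathbb{Z}$ indices, yielding $c_1(x)=c_1(C_\mathbb{R}(x))$. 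Baire-measurability follows because the balanced ternary representation, and hence the edge set of $G_x$, depends Borel-measurably on $x$ away from the countable (hence meager) set of representation-ambiguous points.

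Restricted to $I_0$, $c_1$ is then $T$-invariant (off a meager exceptional set), so by minimality of $T$ any $T$-invariant Baire subset of $I_0$ is meager or comeager: if $U=V\triangle M$ with $V$ open non-empty and $M$ meager, then $\bigcup_{n\geq 0}T^{-n}V=I_0$ by density of forward orbits, so $U$ differs from $I_0$ by a meager set. Consequently $c_1\equiv\kappa$ on a comeager subset $W\subseteq I_0$ for a unique $\kappa\in\mathbb{N}\cup\{\aleph_0,2^{\aleph_0}\}$; the canonical branch $B_x$ already gives $\kappa\geq 1$. To show $\kappa\leq 1$, I would intersect $W$ with the stronger comeager set from Theorem~\ref{comeager}, namely $W_0=\{x\in I_0:\forall k\in\mathbb{N},\,\liminf_n C_\mathbb{R}^n(3^kx)=\tfrac{3}{4}\}$, and argue as follows: for $x\in W\cap W_0$, along a suitable subsequence $n_j\to\infty$ the balanced ternary expansion of $C_\mathbb{R}^{n_j}(3^kx)$ matches the periodic alternating pattern $1.\overline{\m\e}$ of $3/4$ on arbitrarily long stretches of low-position digits. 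Translating this rigid digit structure through the canonical isomorphisms $G_x\cong G_{C_\mathbb{R}^{n_j}(3^kx)}$ established in the first step yields rows of $G_x$ on which the alternating pattern persists on long stretches, and I would verify that this rigidity prevents more than one vertex per such row from admitting a backward-infinite extension, forcing $c_1(x)\leq 1$. Finally, the passage from $I_0$ to $(0,\infty)$ uses the column-shift isomorphism $G_x\cong G_{3^kx}$ (which preserves the branch count), so the comeager $W$ lifts to a comeager subset of $\bigcup_{k\in\mathbb{Z}}3^k I_0=(0,\infty)$.

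The hard part will be the identification $\kappa\leq 1$: translating the proximity of $C_\mathbb{R}^{n_j}(3^kx)$ to $3/4$ into the combinatorial conclusion that only one vertex in each matched row of $G_x$ supports a backward-infinite chain. Concretely, this requires analyzing when a vertex $(n,v)$ has two predecessors in row $n-1$---namely when the coefficients of $(3/2)^{n-1}x$ at positions $v$ and $v-1$ are both nonzero and the integer $3^{-v}\sum_{k\geq v}a_k3^k$ is odd---and verifying that the alternating $\m\e$-pattern forced by proximity to $3/4$ prevents such doubled predecessors from both sustaining infinite backward chains.
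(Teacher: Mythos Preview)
Your high-level scaffolding --- $C_\R$-invariance of the branch count, Baire measurability, and minimality of $T$ forcing a comeager constant value $\kappa\geq 1$ --- matches what the paper already sets up in the paragraph preceding the theorem. The divergence is entirely in how $\kappa=1$ is pinned down, and here your route differs from the paper's and carries a genuine gap.

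The paper argues by building, for each $n$, a dense open set of $x$ close to some $3^m2^{\pm k}$, using the density of the backward $T$-orbit of $1$ (Proposition~\ref{T}). For such $x$ there is a row at which $(\tfrac{3}{2})^{\bullet}x$ is close to a pure power of $3$ and hence has essentially a \emph{single} non-zero balanced-ternary digit. That row is a one-vertex bottleneck through which every branch must pass, and this is what forces coincidence with $B_x$.

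You instead approximate by $3/4$. But $3/4=\e.\overline{\m\e}$ has a non-zero digit at \emph{every} position $\leq 0$, so proximity to $3/4$ produces rows in which all low positions support vertices --- the opposite of a bottleneck. Your assertion that the alternating $\m\e$-pattern ``prevents more than one vertex per such row from admitting a backward-infinite extension'' is precisely the missing content; the predecessor analysis you sketch gives no reason why, among two non-zero adjacent predecessors in an alternating row, one should fail to extend backward. (It is true that $T^{-2}(3/4)=1$, so two rows behind a near-$3/4$ row one lands near a power of $3$; but exploiting that simply reproduces the paper's bottleneck idea by a detour.)

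There is a second, directional problem. Theorem~\ref{comeager} controls forward iterates $C_\R^{n_j}(3^kx)$ with $n_j\to+\infty$, and under the shift isomorphisms $G_x\cong G_{C_\R^{n_j}(3^kx)}$ this places your rigid rows at indices tending to $+\infty$ in $G_x$. A bottleneck at a large positive row only forces branches to agree from that row forward; two bi-infinite branches can still diverge at negative rows, and nothing in your outline addresses that side. The paper's construction, by contrast, lets the bottleneck row be pushed as far as needed by choosing $k$ large.
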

\begin{proof}
	Given $n\in\N$ we show that the set $B_n$ of reals $x$ such that if $\bold{a}^x\in\{\e,\n,\m\}^\Z$ is the balanced ternary representation of $x$, then there is no branch different form $B_x$ beginning in an index left of $(0,-n)$ contains an dense open set $U_n$. The idea is that if $x=2^{-n}$ for some $n\geq 1$ then only one branch can pass through level $n$, because level $n$ has exactly one non-zero labeled index pair. Now we can approximate this situation: Given any open set $U\subseteq (0,\infty)$ we will find an open subset $V\subseteq U\cap B_n$. Assume that all reals whose balanced ternary representations begin with $\bold{a}_{n_0}...\bold{a}_0\cdot...\bold{a}_{n_1}$ are in $V$, where $n_0,n_1\in\Z$ and $\bold{a}_{n_0}\neq\n$. We know that $\{T^{-k}(1)\mid k\in\N\}$ is dense in $(\frac{3}{4},\frac{9}{4}]$. Thus there exist $k\in\N$ and $m\in\Z$ such that $3^m2^{-k}$ begins with $\bold{a}_{n_0}...\bold{a}_0\cdot...\bold{a}_{n_1}$ where $n_0,n_1\in\Z$. Then due to continuity of multiplication with $2$ there exist $l>k+N+n$ such that for any $r\in 3^m(1-2^{-l},1+2^{-l})$ the balanced ternary representation of $\frac{r}{2^k}$ begins with   $\bold{a}_{n_0}...\bold{a}_0\cdot...\bold{a}_{n_1}$, but any potential branch beginning at an index $(0,j)$ with $j>-n$ ends at level $k$ unless it is $B_x$ (where level $k$ is the set of indices $\{k\}\times\Z)$). Thus the open set $\frac{3^m}{2^k}(1-2^{-l},1+2^{-l})\subseteq V\cap B_n$. So $C=\bigcap_{n\in\N}U_n$ is as desired. 
\end{proof}
For the notion of measure we formulate the following conjecture:
\begin{prob}
	$G_x$ has exactly one branch for a Lebesgue-co-null set of $x\in(0,\infty)$.
\end{prob}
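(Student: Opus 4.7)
The plan is to upgrade the preceding theorem (that $c_1(G_x)=1$ for comeager $x$) to its measure-theoretic analogue. Since $G_x = G_{\Col_R(x)}$, the map $x \mapsto c_1(G_x)$ is $\Col_R$-invariant, universally measurable, and also invariant under $x \mapsto 3x$. Combined with unique ergodicity of $T$ on $I_0$ (Proposition~\ref{T}) and the equivalence of the invariant measure $\mu$ with Lebesgue on $I_0$, this forces $c_1$ to be Lebesgue-a.e.\ constant on $(0,\infty)$. So it suffices to find one Lebesgue-positive set of $x$ on which $c_1(G_x) = 1$.

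To produce such a set I would reproduce the comeager argument in quantitative form. There, if $x$ is close to $3^m \cdot 2^{-k}$ then row $k$ of $G_x$ carries a simple enough balanced ternary expansion to prevent any branch other than $B_x$ from passing through that level. For positive measure I would invoke the pointwise ergodic theorem for $T$: for $\mu$-a.e.\ $x \in I_0$ the forward orbit $\{T^n(x)\}_{n\in\N}$ enters every neighborhood $U \ni 1$ in $I_0$ with asymptotic frequency $\mu(U)>0$. Passing to a sequence of shrinking neighborhoods of $1$ and applying Borel--Cantelli, I would extract a Lebesgue-full subset of $I_0$ on which $G_x$ exhibits approximate \emph{bottlenecks}: rows $a_n$ at which $(\tfrac{3}{2})^{a_n}x$ is exponentially close to a power of $2$, so that the balanced ternary expansion of that row has a very long run of zero digits in a controlled range.

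To turn these bottlenecks into uniqueness I would apply the same argument to the inverse $T^{-1}$, which by bijectivity of $T$ on $I_0$ is also uniquely ergodic with invariant measure $\mu$. This produces bottlenecks at arbitrarily \emph{negative} row indices as well. Coincidence of two branches at a single row $n$ propagates to all rows $n'\geq n$ since every vertex of $G_x$ has out-degree one. Bottlenecks at arbitrarily low $n$ therefore force any putative second branch to coincide with $B_x$ on all of $\Z$, giving $c_1(G_x)=1$ on the chosen Lebesgue-positive set.

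The main obstacle, and presumably the reason the statement remains a conjecture, is that a Lebesgue-typical $x$ has infinitely many non-zero ternary digits in every row of $G_x$; the literal trapping of the comeager proof (a row with only finitely many non-zeros in a controlled range) never occurs. One must replace it by approximate trapping via long zero-runs inside a row, and quantify how such runs at different levels combine to exclude branches other than $B_x$. I expect this to require a refinement of the Hoeffding-type estimate in Lemma~\ref{Hoeffding} controlling the joint digit statistics across several rows simultaneously, together possibly with a zero--one law for the tail $\sigma$-algebra of the digit sequence. This combinatorial bookkeeping of how zero-runs compose across levels is the technical heart of the argument and is what the paper leaves open.
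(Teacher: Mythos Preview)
The statement you are addressing is labeled \texttt{prob} in the paper, which is the \emph{Conjecture} environment; the paper offers no proof, and there is therefore nothing to compare your proposal against. What you have written is not a proof but a research plan, and you say as much in your final paragraph.

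Your reduction step is sound: $c_1$ is invariant under $\Col_R$ (since $G_{\Col_R(x)}$ is a shift of $G_x$) and under $x\mapsto 3x$, so unique ergodicity of $T$ on $I_0$ together with the equivalence $\mu\sim\lambda|_{I_0}$ does force $c_1(G_x)$ to be Lebesgue-a.e.\ constant. Hence it really would suffice to produce a single positive-measure set with $c_1=1$. That part is a genuine and correct observation.

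The gap is exactly where you locate it. In the comeager proof one arranges a row in which, within the relevant column range, there is literally \emph{one} non-zero vertex; any branch passing through that range is then pinned to that vertex, and forward determinism (out-degree one) propagates coincidence to all later rows. Your ``approximate bottlenecks'' via the ergodic theorem only give long zero-runs, not a single non-zero vertex, and the sentence ``coincidence of two branches at a single row $n$ propagates to all rows $n'\geq n$'' then has no bite: approximate bottlenecks do not force coincidence at any row, so there is nothing to propagate. Turning long zero-runs at many levels into an exclusion of all non-$B_x$ branches is precisely the missing combinatorial argument, and neither the Hoeffding estimate of Lemma~\ref{Hoeffding} nor a tail zero--one law is known to supply it. Your proposal correctly identifies the obstruction but does not overcome it; the statement remains open.
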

\begin{rem}
	Let $\bold{a}=(\bold{a}_{n,z})_{(n,z)\in\Z\times \Z}\in\{\e,\n,\m\}^{\Z\times\Z}$ fulfill the condition $$\bold{a}_{n+1,z+1}=-\bold{a}_{n,z}+\bold{a}_{n+1,z}+\bold{a}_{n,z-1}\bold{a}_{n+1,z}(\bold{a}_{n+1,z}+\bold{a}_{n,z-1})$$ for all $n,z\in\Z$, where addition and multiplication are modulo $3$. Define $V_{\bold{a}}=\{(n,z)\in\Z\times\Z\mid \bold{a}_{n,z}\neq\n\}$ and define the graph $G_{\bold{a}}$ with vertex set $V_{\bold{a}}$ by putting $((n,v),(n+1,w))\in G_{\bold{a}}$ for  $$w=\begin{cases}
		v+1&\text{if}\ab\ab \bold{a}_{n+1,v+1}=-\bold{a}_{n,v}, \\
		v&\text{if} \ab\ab \bold{a}_{n+1,v+1}\neq-\bold{a}_{n,v}.
	\end{cases}$$ 
	If, furthermore, for all $n\in\Z$ there exist $N_n>0$ such that $\bold{a}_{n,z}=0$ for $z>N_n$, then one can show that $G_\bold{a}$ is $G_r$ for $r=\sum_{k\in\Z}\bold{a}_{0,z}3^k$. 

Note that due to the fact that $T$ is minimal (see Proposition \ref{T}) we have the following homogeneity property: If $x,y\in\R\setminus{\{0\}}$ and $F\subset\Z\times\Z$ is finite then there exist $(a,b)\in \Z\times\Z$ such that $\bold{a}^x_{n,z}=\bold{a}^y_{n+a,z+b}$ for all $(n,z)\in F$.
\end{rem}
\begin{rem}
One can similarly define directed graphs in the case of the Collatz map on the positive integers. The Collatz conjecture is then equivalent to the Conjecture that all graphs have exactly one component. In a future work the author will show that under the hypothesis of Conjecture \ref{verylikely} for a set of natural density $1$ the corresponding graphs will have only one large component for a suitable notion of largeness.
\end{rem}
\subsection*{Acknowledgments}
The author thanks Claudius Röhl for his experimental findings regarding Conjecture \ref{verylikely} (see Remark \ref{test}). 

	\bibliography{Voll}
	\bibliographystyle{plain}
	\textit{Email address:} Manuel.Inselmann@gmx.de
\end{document}